\documentclass{amsart}
\usepackage{graphicx,amsfonts,amssymb,amsmath,amsthm,url,amscd}

\theoremstyle{plain} 
\newtheorem{theorem}             {Theorem}  [section]
\newtheorem{lemma}      [theorem]{Lemma}
\newtheorem{corollary}  [theorem]{Corollary}
\newtheorem{proposition}[theorem]{Proposition}

\theoremstyle{definition}
\newtheorem{definition} [theorem]{Definition}

\theoremstyle{remark}
\newtheorem{remark} [theorem]    {Remark}

\setlength{\evensidemargin}{0in} \setlength{\oddsidemargin}{0in}
\textwidth=6.5 true in
\textheight=8 true in
\topmargin 0cm


\def\sinc{\operatorname{sinc}}
\def\Cl{\operatorname{Cl}}
\def\sgn{\operatorname{sgn}}

\def\res{\operatorname{res}}
\def\SO{\operatorname{SO}}
\def\vol{\operatorname{vol}}
\def\fin{\operatorname{fin}}
\def\div{\operatorname{div}}
\def\Tr{\operatorname{Tr}}
\def\Hom{\operatorname{Hom}}
\def\ad{\operatorname{ad}}

\def\PGL{\operatorname{PGL}}
\def\SL{\operatorname{SL}}
\def\GL{\operatorname{GL}}
\def\norm{\operatorname{N}}
\def\onehalf{\tfrac{1}{2}}
\def\vecgamma{{\mathbf{\Gamma}}}
\renewcommand{\Re}{\mathrm{Re}}

\def\eps{\varepsilon}

\newcommand{\efrac}[2]{\genfrac{}{}{0pt}{}{#1}{#2}}

\begin{document}

\title{Mass equidistribution of Hilbert modular eigenforms}
\author{Paul D. Nelson}
\begin{abstract}
  Let $\mathbb{F}$ be a totally real number field, and let $f$
  traverse a sequence of nondihedral holomorphic eigencuspforms
  on $\GL_2/\mathbb{F}$ of weight
  $(k_1,\dotsc,k_{[\mathbb{F}:\mathbb{Q}]})$, trivial central
  character and full level.  We show that the mass of $f$
  equidistributes on the Hilbert modular variety as
  $\max(k_1,\dotsc,k_{[\mathbb{F}:\mathbb{Q}]}) \rightarrow
  \infty$.

  Our result answers affirmatively a natural analogue of a
  conjecture of Rudnick and Sarnak (1994).  Our proof
  generalizes the argument of Holowinsky-Soundararajan (2008)
  who established the case $\mathbb{F} = \mathbb{Q}$.  The
  essential difficulty in doing so is to adapt Holowinsky's
  bounds for the Weyl periods of the equidistribution problem in
  terms of manageable shifted convolution sums of Fourier
  coefficients to the case of a number field with nontrivial
  unit group.
\end{abstract}
\maketitle

\setcounter{tocdepth}{1}
\tableofcontents
\section{Introduction}

\subsection{Statement of main result}\label{sec:stat-main-result}
Let $\mathbb{F}$ be a totally real number field and $f$ a
holomorphic Hilbert modular eigencuspform on $\PGL_2/\mathbb{F}$
of weight $k = (k_1,\dotsc,k_{[\mathbb{F}:\mathbb{Q}]})$ and
full level.  The mass $|f|^2$ descends to a finite measure on
the Hilbert modular variety; our aim in this paper is to prove
that the measures so obtained equidistribute with respect to the
uniform measure as the weight $k$ of $f$ tends to $\infty$.
Motivation for this problem, as discussed in \S\ref{sec:motivation}, comes from its connection to quantum chaos
by analogy with the \emph{quantum unique ergodicity} conjecture of
Rudnick and Sarnak \cite{MR1266075} as well as from its
connection to central problems in the analytic theory of
$L$-functions, specifically those such as the subconvexity
problem that concern the rate of growth of central $L$-values.
Our result and its method of proof directly generalize recent
work of Holowinsky and Soundararajan
\cite{holowinsky-soundararajan-2008} in the case $\mathbb{F} =
\mathbb{Q}$, but the generalization is not immediate.

To state our principal result, let $\mathbb{A}$ be the adele ring of
$\mathbb{F}$ and $K$ a maximal compact subgroup of the group
$\PGL_2(\mathbb{A})$.
The space $Y =
\PGL_2(\mathbb{F}) \backslash \PGL_2(\mathbb{A}) / K$
is a disjoint union (indexed by a quotient of the narrow
class group of $\mathbb{F}$)
of finite-volume non-compact complex manifolds
of dimension $[\mathbb{F}:\mathbb{Q}]$.
Let $\mu$ be the
quotient
measure on $Y$ induced by a fixed Haar measure on
$\PGL_2(\mathbb{A})/K$.
\begin{theorem}\label{thm:1}
  Let $f : \PGL_2(\mathbb{A}) \rightarrow \mathbb{C}$ traverse a
  sequence of nondihedral holomorphic eigencuspforms of
  weight $(k_1,\dotsc,k_{[\mathbb{F}:\mathbb{Q}]})$ as above, so
  that $|f|^2 \, d \mu$ traverses a sequence of measures on $Y$.
  Fix a compactly supported function $\phi \in C_c(Y)$.  Then
  \begin{equation}\label{eq:1}
    \frac{\int \phi |f|^2 \, d \mu}{\int |f|^2 \, d \mu}
    \rightarrow \frac{\int \phi \, d \mu}{\int \, d \mu }
    \quad \text{  as } \max(k_1,\dotsc,k_{[\mathbb{F}:\mathbb{Q}]}) \rightarrow \infty.
  \end{equation}
\end{theorem}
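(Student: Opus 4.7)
The plan is to follow the Holowinsky--Soundararajan framework, combining two complementary upper bounds on the Weyl periods $W_\phi(f) := \int \phi |f|^2 \, d\mu$ that together suffice to control every relevant test function. First I would reduce the equidistribution statement \eqref{eq:1} to showing that $W_\phi(f)/\int |f|^2 \, d\mu$ tends to zero for $\phi$ ranging over a spanning set of mean-zero test functions on $Y$---concretely, incomplete Eisenstein series and $L^2$-normalized Hecke--Maass cusp forms---by a standard density argument from the spectral decomposition of $L^2(Y)$.

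Next I would bound each such Weyl period in two essentially orthogonal ways. The first bound, following Soundararajan, identifies $W_\phi(f)$ up to harmless factors with a central value of a Rankin--Selberg $L$-function $L(\tfrac12, f \otimes \bar{f} \otimes \phi)$ (or an Eisenstein analog) and estimates it using weak subconvexity over the totally real field $\mathbb{F}$; this yields savings of a power of $\log k$ whenever the Satake parameters of $f$ are not too concentrated. The second bound, following Holowinsky, unfolds $|f|^2$ via its Fourier expansion at a cusp, truncates, and reduces the resulting integral to shifted convolution sums $\sum_\nu \lambda_f(\nu) \overline{\lambda_f(\nu + h)} V(\nu)$ indexed by totally positive integers modulo units, estimated by Shiu-type sieve inequalities exploiting the near-orthogonality of Hecke eigenvalues against multiplicative shifts.

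The two bounds combine because Soundararajan's estimate is small except when $f$ has atypically large Hecke eigenvalues on average---which makes the relevant $L$-value large---while Holowinsky's estimate is small precisely in the opposite regime. Quantifying this dichotomy, their geometric mean beats the trivial bound for every $f$, which yields \eqref{eq:1}.

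The hard part will be the number-field adaptation of Holowinsky's shifted convolution estimates. Over $\mathbb{Q}$, the shifted sum $\sum_{n \le X} \lambda_f(n)\lambda_f(n+h)$ carries a natural archimedean cutoff coming directly from the Fourier expansion at the unique cusp. Over $\mathbb{F}$ the Fourier coefficients are indexed by totally positive elements of a fractional ideal \emph{modulo the action of the unit group} $\mathcal{O}_\mathbb{F}^\times$, and the archimedean weight lives on a fundamental domain for this action that is unbounded in each real embedding whenever $\mathbb{F} \neq \mathbb{Q}$. The technical heart of the argument will therefore be a decomposition of the shifted sum into unit orbits and, within each orbit, into dyadic pieces (via a logarithmic change of variables on the fundamental domain) on which a Shiu-type sieve bound can be applied uniformly, with a careful summation recovering only a power-of-log loss over the number of pieces.
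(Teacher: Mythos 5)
Your overall architecture (spanning set of Weyl periods, Soundararajan's weak subconvexity on one side, Holowinsky's shifted convolution sums on the other, and the dichotomy in the Hecke eigenvalues) matches the paper's, and those parts do generalize essentially as you describe. The gap is in the step you correctly single out as the technical heart: your plan to handle the unbounded fundamental domain for $\mathfrak{o}_+^*$ by cutting it into dyadic pieces and summing the sieve bound over the pieces with ``only a power-of-log loss over the number of pieces'' is precisely the na\"{i}ve generalization that fails. After unfolding, the shifted sum lives (up to units) on the region under the hyperbola $\{x_1\dotsb x_d = X\}$ with $X \approx k^{\mathbf{1}} Y$ and $d=[\mathbb{F}:\mathbb{Q}]$; a dyadic decomposition of that region produces $\asymp \log(X)^{d-1}$ pieces, each contributing roughly as much as the whole sum does over $\mathbb{Q}$, so the summation over pieces loses a factor $\log(X)^{d-1}$. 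But the entire Holowinsky--Soundararajan dichotomy only wins by a tiny negative power of $\log(k^{\mathbf{1}})$ (of order $\log(k^{\mathbf{1}})^{-1/30}$ in the final synthesis), so already for a real quadratic field a single lost factor of $\log$ destroys the conclusion.

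The missing idea is a sharpened treatment of the archimedean weight attached to each pair $(m,n)=(n+l,n)$. The local integral of the three Whittaker functions is not merely bounded by the corresponding Gamma integral (which is what the dyadic-pieces-plus-trivial-summation scheme implicitly uses); it carries the factor $\prod_j \bigl(\min(m_j,n_j)/\max(m_j,n_j)\bigr)^{(k_j-1)/2}$, which is $\ll_A X^{-A}$ as soon as \emph{any single} component $\max(m_j,n_j)$ falls below $k_j Y^{1/d}/U$ with $U=\exp(\log(X)^{\eps})$ (Lemma \ref{lem:j-bound-legit}, Corollary \ref{prop:bound-integral}, Lemma \ref{lem:sum-dyadic}). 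This confines the effective support of the shifted sum to the region between the hyperbola and the hyperplanes $\{x_j = k_j Y^{1/d}/U\}$, whose volume is only $X\log(X)^{(d-1)\eps}$; only then does a covering by $\ll\log(eU)^{d-1}=\log(X)^{(d-1)\eps}$ balanced boxes (moved around by units) together with a large-sieve bound on each box close the argument. A secondary caveat: Shiu/Nair--Tenenbaum-type bounds carry implied constants depending on the shift, which is not acceptable here since $l$ must vary (up to norm $Y^{1+\eps}$, especially for incomplete Eisenstein series); one needs an estimate uniform in $l$ as in Theorem \ref{thm:essential-sums}.
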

In words, the measures $|f|^2 \, d \mu$ \emph{equidistribute} as
any one of the weight components $k_i$ tend to $\infty$.  We
could normalize $d \mu$ and $|f|^2 \, d \mu$ to be probability
measures, in which case Theorem \ref{thm:1} asserts that $|f|^2
\, d \mu$ \emph{converges weakly} to $d \mu$.  Theorem
\ref{thm:1} is false for certain\footnote{those induced from
  idele class characters on unramified totally imaginary
  quadratic extensions of $\mathbb{F}$; see \S\ref{sec:holom-eigenc}}
dihedral forms $f$ that
vanish identically on half of the connected components of $Y$;
in that case, the analogous assertion that $|f|^2$
equidistributes as $\max(k_1,\dotsc,k_{[\mathbb{F}:\mathbb{Q}]}) \rightarrow \infty$ on
the union of the remaining connected components of $Y$ remains
true, but to simplify the exposition we shall consider only
nondihedral forms in this paper.

The case $\mathbb{F} = \mathbb{Q}$ of Theorem \ref{thm:1} is the
celebrated theorem of Holowinsky-Soundararajan
\cite{holowinsky-soundararajan-2008}, who established a
quantitative rate of convergence in the limit (\ref{eq:1}) for a
``spanning set'' of functions $\phi$ (see
\S\ref{sec:brief-revi-holow}).  Marshall
\cite{2010arXiv1006.3305M} proved a generalization of their
result to cohomological forms over general number fields
$\mathbb{F}$ that satisfy the Ramanujan conjecture, under the
mild technical assumptions that $\mathbb{F}$ have narrow class
number one and that the weights $k_i$ (or the analogous
archimedean parameters for fields $\mathbb{F}$ with complex
places) all tend to infinity together with sufficient
uniformity, precisely that
$\min(k_1,\dotsc,k_{[\mathbb{F}:\mathbb{Q}]}) \rightarrow
\infty$ with $\min(k_1,\dotsc,k_{[\mathbb{F}:\mathbb{Q}]}) \geq
(k_1 \dotsb k_{[\mathbb{F}:\mathbb{Q}]})^{\eta}$ for some fixed
$\eta > 0$.  Since cohomological forms over
totally real and imaginary quadratic number fields are known to
satisfy the Ramanujan conjectures, his results are unconditional
in many cases and overlap\footnote{We proved a slightly weaker form
of Theorem \ref{thm:1}
in September 2009 and learned soon thereafter from Sarnak's
lecture notes \cite{sarnak-progress-que} that the overlapping
results just described had been obtained earlier that year in the 2009/2010 Princeton
PhD thesis of his student S. Marshall \cite{2010arXiv1006.3305M}.  We hope that our own
arguments differ sufficiently to be of interest.} with ours when $\mathbb{F}$ is totally
real of narrow class number one and the weights grow uniformly
in the sense just described.
The essential difference between our approaches is explained in
remark \ref{rmk:compare-marshall}.

An important ingredient in Holowinsky's contribution to proof of
Theorem \ref{thm:1} when $\mathbb{F} = \mathbb{Q}$ is his bound
\begin{equation}\label{eq:35}
  \sum_{n \leq x}
  \lambda(n) \lambda(n+l)
  \ll_\eps \tau(l)
  x \log(x)^\eps \prod_{p \leq x}
  \left(
    1 + \frac{\lambda(p) -1}{p}
  \right)^2
\end{equation}
for any multiplicative function $\lambda : \mathbb{N}
\rightarrow \mathbb{R}_{\geq 0}$ satisfying $\lambda(n) \leq
\tau_m(n)$ for some positive integer $m$ and any ``shift'' $l$
satisfying $0 \neq  |l| \leq x$ (see \S\ref{sec:holow-indep-argum}).
A generalization of \eqref{eq:35} to number fields features in
Marshall's work mentioned above.  We independently generalize
\eqref{eq:35} to number fields that are totally real, although
this restriction is not essential.  The bounds that we obtain
are stronger than those obtained by Holowinsky and Marshall in
that we have removed the factor $\tau(l)$ appearing on the RHS
of \eqref{eq:35} and its generalizations (see Theorem
\ref{thm:essential-sums} and Theorem
\ref{thm:shifted-sums-boxes}).  Although doing so is not
necessary for our present purposes, this refinement has
applications to the study of the distribution of mass of
holomorphic forms of large level \cite{PDN-HQUE-LEVEL}.

\subsection{Motivation}\label{sec:motivation}
The study of the limiting behavior of the masses of Hilbert
modular eigencuspforms is natural and interesting from several
perspectives of which we highlight two.  First, it is analogous
to a fundamental problem in quantum chaos, which concerns more
generally the limiting behavior as $\lambda \rightarrow \infty$
of eigenfunctions $\phi$
\begin{equation}\label{eq:23}
  (\Delta + \lambda ) \phi  = 0
\end{equation}
of the Laplacian $\Delta$ on a compact Riemannian manifold $M$
for which the geodesic flow is chaotic (see \cite{MR1321639}).  Here the geodesic flow on $M$ is regarded as
the Hamiltonian flow of a chaotic classical mechanical system, the
Laplacian $\Delta \circlearrowright L^2(M)$ as the Hamiltonian
operator for the corresponding quantized system, and the
eigenfunction $\phi$ (normalized so that $\int |\phi|^2 = 1$) as
the wave function for a quantum particle on $M$ of energy
$\lambda$ whose position is described in the Copenhagen
interpretation of quantum mechanics by the probability density
$|\phi|^2$.
In suitable units the Schr\"{o}dinger equation
for stationary states reads
$(\hbar^2 \Delta + \lambda)\phi = 0$,
so studying
$\phi$ in \eqref{eq:23}
as $\lambda \rightarrow \infty$
is akin to considering the semiclassical limit
$\hbar \rightarrow 0$ of the quantization
of the geodesic flow.

Among several questions that one can ask we single out that of
the behavior of the densities $|\phi|^2$ for particles of high
energy $\lambda \rightarrow \infty$.
A fundamental result in
this direction is the \emph{quantum ergodicity} theorem of
Schnirelman, Colin d{e} Verdi\`{e}re, and Zelditch
\cite{MR0402834,MR819779,MR916129}, which asserts that if the
geodesic flow on the unit cotangent bundle of $M$ is ergodic,
then for any sequence $(\phi_n)$ with $\lambda_n \rightarrow
\infty$ there exists a full-density subsequence $(\phi_{n_k})$
such that the $|\phi_{n_k}|^2$ equidistribute.\footnote{in a more
  precise sense than we describe here; see the introduction to
  \cite{MR1266075}}
In the particular case that $M$ is
negatively curved, the \emph{quantum unique ergodicity} (QUE)
conjecture of Rudnick and Sarnak \cite{MR1266075} predicts that
the \emph{full} sequence of $|\phi_n|^2$ equidistributes with respect to the
volume measure on $M$ as $\lambda \rightarrow \infty$.

The QUE conjecture is considered difficult and there
has been little progress for general $M$,
but for certain special $M$
that arise from arithmetic considerations (such as the modular
curve or the Hilbert modular varieties) there has been
significant progress on QUE and related questions
\cite{sarnak-progress-que,MR1361757,MR2195133,2009arXiv0901.4060S,MR2346281,holowinsky-soundararajan-2008}.  Such
arithmetic manifolds arise as quotients of symmetric spaces by
arithmetic groups and are characterized by the presence of
additional symmetry in the form of a large commuting family
$\mathbb{T}$ of correspondences that commute with the 
algebra $\mathcal{D}$ of invariant differential operators,
thereby providing a powerful tool for the study of common
eigenfunctions of $\mathbb{T}$ and $\mathcal{D}$.  One may hope
that such arithmetic instances of QUE provide tractable
and yet representative model cases for
the more general problem (see \cite{MR1321639}).

The variant of QUE that we consider for holomorphic Hilbert
modular eigencuspforms $f$ of increasing weight is in the spirit
of the original conjectures and was spelled out explicitly for
the modular curve ($\mathbb{F} = \mathbb{Q}$) by Luo and Sarnak
\cite{luo-sarnak-mass}; it is important here that $f$ is taken
to be an eigenform (of the Hecke algebra), since for instance
the powers of a fixed form have weight tending to infinity but
do not have equidistributed mass.

A second motivation for our considerations arises from their
connection to central problems in the analytic theory of
$L$-functions.  Watson \cite{watson-2008} showed that for $M =
\SL(2,\mathbb{Z}) \backslash \mathbb{H}$ (as well as other
``arithmetic surfaces'' $\Gamma \backslash \mathbb{H}$), the
Weyl periods for the equidistribution problem posed by QUE are
essentially products of central values $L(\onehalf)$ of
automorphic $L$-functions $L(s)$ of degree at most $6$; a
similar relation holds over totally real fields (see \S
\ref{sec:sound-indep-argum}).  The generalized Riemann hypothesis
(GRH) for such $L(s)$, which asserts that the nontrivial zeros
of $L(s)$ lie on the line $\Re(s) = \tfrac{1}{2}$, would
imply
sufficiently strong bounds on $L(\onehalf)$ to establish the QUE
conjecture for $M$.  But the bounds on $L(\onehalf)$ demanded by
QUE are considerably more tractable than those implied by the
GRH (let alone the GRH itself), and so provide
accessible problems on which to develop new techniques.

\subsection{Overview of proof}\label{sec:overview-proof}
Recall that we consider nondihedral holomorphic Hilbert modular
eigencuspforms $f$ on $\PGL_2/\mathbb{F}$ of weight
$(k_1,\dotsc,k_{[\mathbb{F}:\mathbb{Q}]})$ and full level, the
equidistribution of whose mass we seek on the (in general,
non-connected) Hilbert modular variety $Y$.  The basic strategy,
as in many equidistribution problems,
is to study the
``Weyl periods'' $\int \phi \lvert f \rvert ^2 $ as $\phi$ traverses a
\emph{convenient spanning set} of functions on $Y$,
analogous to how one uses the exponentials $\mathbb{R} / \mathbb{Z}
\ni x \mapsto e^{2
  \pi i n x}$ to prove the equidistribution of the fractional
parts
of $\alpha k$ ($k \in \mathbb{N}$) for $\alpha \in \mathbb{R} -
\mathbb{Q}$.

Indeed,
Theorem
\ref{thm:1} follows as soon as one can establish (\ref{eq:1})
for each element $\phi$ of a set the uniform closure of whose
span contains $C_c(Y)$.  Such a spanning set is furnished by the
Maass eigencuspforms and the incomplete Eisenstein series, as
defined in \S\ref{sec:automorphic-forms}.
To highlight the essential difficulties let us suppose in this
section
that $\phi$ is a Maass eigencuspform.
Then $\int \phi = 0$, so to establish (\ref{eq:1})
we must show that
\begin{equation}\label{eq:4}
  \frac{\int \phi \lvert f  \rvert ^2 }{ \int \lvert f  \rvert
    ^2 }
  \rightarrow 0
  \quad \text{ as } \max(k_1,\dotsc,k_{[\mathbb{F}:\mathbb{Q}]})
  \rightarrow \infty,
\end{equation}
where the rate of convergence
is allowed to depend upon $\phi$.

Take $\mathbb{F} = \mathbb{Q}$
and $f$ of weight $k$ for now.  Holowinsky and
Soundararajan established (\ref{eq:4}) by a remarkable synthesis
of their independent efforts \cite{holowinsky-2008,MR2680497}, which we now recall briefly, saving a
more detailed discussion for \S\ref{sec:brief-revi-holow}
and referring to the lucid expositions of
\cite{holowinsky-soundararajan-2008,sarnak-progress-que,aws2010sound} for further motivation and details.  Watson's
formula \cite{watson-2008} and work of Gelbart-Jacquet
\cite{MR546600} and Hoffstein-Lockhart-Goldfeld-Lieman
\cite{HL94} imply (see \cite[Lem 2]{holowinsky-soundararajan-2008}) that 
\begin{equation}\label{eq:7}
  \frac{\int \phi \lvert f  \rvert ^2 }{\int \lvert f \rvert ^2
  }
  \approx_\phi
  \frac{\lvert L (\phi \times \ad f, \tfrac{1}{2} ) \rvert
    ^{1/2}}{
    k ^{1/2}
  }
  \exp
  \left(-\sum_{p \leq k} \frac{1}{p} \lambda(p^2) \right),
\end{equation}
where $L(\cdot)$ denotes the finite part of the $L$-function
indicated above, $\approx_\phi$ denotes equality up to
multiplication
by a
bounded power of $\log \log(k)$ times a constant depending
upon $\phi$,
and $\lambda(n)$ is the $n$th Fourier coefficient of $f$
normalized so that the Deligne bound reads $|\lambda(p)| \leq
2$.
Soundararajan proves a ``weak
subconvexity''
bound for the central values of quite general
$L$-functions satisfying a ``weak Ramanujan hypothesis,''
specializing in the present circumstances to
$\lvert L (\phi \times \ad f, \tfrac{1}{2} ) \rvert
\ll k/\log(k)^{1-\eps}$
for any $\eps > 0$, which implies (\ref{eq:4}) provided that
\begin{equation}\label{eq:siegel-bad}
  \frac{
    \sum_{p \leq k} \tfrac{1}{p} \lambda(p^2)
  }{
    \sum_{p \leq k} \tfrac{1}{p}
  }
  \geq  -1/2 + \delta + o_{k \rightarrow \infty}(1)
  \quad \text{ for some fixed } \delta > 0.
\end{equation}
By considering Fourier expansions
at the cusps of the modular curve
and bounding the sums
(described below in more detail)
that arise,
Holowinsky
proves (following the
reformulation
of Iwaniec \cite{IwaniecQUENotes})
\begin{equation}\label{eq:holow-bound-iw}
  \frac{\int \phi |f|^2}{|f|^2}
  \ll_{\phi,\eps} \log(k)^\eps
  \exp \left( - \sum_{p \leq k}
    \frac{1}{p} (|\lambda(p)| - 1 )^2
  \right),
\end{equation}
which implies \eqref{eq:4} provided that
\begin{equation}
  \frac{
    \sum_{p \leq k}
    \tfrac{1}{p}
    (|\lambda(p)| - 1)^2
  }{
    \sum_{p \leq k}
    \tfrac{1}{p}
  }
  \geq \delta + o_{k \rightarrow \infty}(1)
  \quad \text{ for some fixed } \delta > 0.
\end{equation}
In summary, Soundararajan succeeds
unless typically $\lambda(p^2) \lessapprox -1/2$,
while Holowinsky succeeds
unless typically $|\lambda(p)| \approx 1$
(in the harmonically weighted
sense taken over $p \leq k$);
the identity $\lambda(p)^2 = \lambda(p^2) + 1$
shows that
\[
\lambda(p^2) \lessapprox -1/2
\implies |\lambda(p)| \lessapprox \sqrt{1/2}
\quad \text{ and }
\quad 
|\lambda(p)| \approx 1 \implies
\lambda(p^2) \approx 0,
\]
so in all cases at least
one of their approaches succeeds.

The basic ideas underlying our proof when $\mathbb{F}$ is
totally real are the same as those just described in the case
$\mathbb{F} = \mathbb{Q}$; the generalization is a nontrivial
and yet purely technical matter, requiring no fundamental
reworking of the overall strategy.  As we shall explain in
\S\ref{sec:brief-revi-holow}, the only part of the $\mathbb{F} = \mathbb{Q} $
argument that does not generalize transparently is Holowinsky's
proof of \eqref{eq:holow-bound-iw}.
His argument amounts to
\begin{enumerate}
\item bounding $\int \phi |f|^2 / \int |f|^2$
  from above
  in terms of the ``shifted sums''
  \begin{equation}\label{eq:holow-shifted-sums-intro}
    X^{-1} \sum_{n \in \mathbb{Z} \cap [1,X]}^{\text{smooth}}
    \lambda(n) \lambda(n+l),
  \end{equation}
  where $l \neq 0$ is a small integer and $X \approx k$,
  and
\item bounding the shifted sums
  \eqref{eq:holow-shifted-sums-intro};
  a reformulation \cite{IwaniecQUENotes} of the bound that
  Holowinsky
  obtains is
  \begin{equation}\label{eq:holow-bound-shifted-sums-intro}
    X^{-1} \sum_{n,n+l \in \mathbb{Z} \cap [1,X]}
    |\lambda(n) \lambda(n+l)|
    \ll
    \tau(l)
    \log(k)^\eps
    \prod_{p \leq k}
    \left( 1 + \frac{2 (|\lambda(p)|-1)}{p} \right),
  \end{equation}
  which is roughly the square of the bound one would expect for
  $X^{-1} \sum |\lambda(n)|$ and so may be understood as
  asserting the independence of the random variables $n \mapsto
  |\lambda(n)|$, $n \mapsto |\lambda(n+l)|$ owing to the
  independence of the prime factorizations of $n$ and $n+l$ and
  the multiplicativity of $\lambda$.  The novelty in his
  argument is that he does not exploit cancellation in the sums
  \eqref{eq:holow-shifted-sums-intro} that one would expect to
  arise from the independent variation in sign of $\lambda(n)$
  and $\lambda(n+l)$ for varying $n$ and fixed $l \neq 0$; his
  motivation for doing so came from the expectation that the
  $\lambda(p)$ follow the Sato-Tate distribution, which suggests
  that $X^{-1} \sum |\lambda(n)| \ll \log(X)^{-\delta}$ for some
  small $\delta > 0$.  See
  \cite{luo-sarnak-mass,holowinsky-soundararajan-2008,
    sarnak-progress-que,aws2010sound} and especially
  \cite{MR2484279} for further discussion.
\end{enumerate}
Now let $[\mathbb{F}:\mathbb{Q}] = d$
and take $f$ of weight $(k_1,\dotsc,k_d)$.
The most na\"{i}ve higher-dimensional generalization
of Holowinsky's method that we found requires
one to replace $X$ and $\mathbb{Z} \cap [1,X]$ in
\eqref{eq:holow-shifted-sums-intro}
by $X \approx k_1 \dotsb k_{d}$
and $\mathfrak{o} \cap \mathcal{R}$, where $\mathfrak{o}$ is
the ring of integers in $\mathbb{F}$
and $\mathcal{R}$ is the region
in the totally positive quadrant of $\mathbb{F} \otimes_\mathbb{Q}
\mathbb{R}
\cong \mathbb{R}^{d}$
bounded by the hyperbola $\{ x_1 \dotsb x_{d}
= X \}$
and the hyperplanes $\{x_i = c\}$ for some small constant $c >
0$.
Unfortunately, the volume of $\mathcal{R}$ is roughly
$X \log(X)^{d-1}$,
so even the most optimistic bounds
along the lines of \eqref{eq:holow-bound-shifted-sums-intro}
fail to produce an estimate of the quality
\eqref{eq:holow-bound-iw}
because of the unaffordable factor
$\log(X)^{d-1}$
when $d > 1$.

To circumvent this difficulty, we refine Holowinsky's upper
bound for $\int \phi |f|^2$ by a method
that when $\mathbb{F} = \mathbb{Q}$ leads
(see remark \ref{rmk:super-refined})
to the precise
asymptotic expansion
\begin{equation}\label{eq:refined-holowinsky}
  \frac{\int \phi |f|^2}{\int |f|^2}
  \sim \frac{(Y k)^{-1}}{L(\ad f, 1)}
  \mathop{\sum \sum}_{
    \substack{
      m=n+l \\
      \max(m,n) \asymp Y k
    }
  }
  \frac{\lambda_\phi(l) }{\sqrt{|l|}}
  \lambda_f(m) \lambda_f(n)
  \kappa_{\phi,\infty}
  \left( \frac{k - 1}{4 \pi }
    \left\lvert \log \frac{m}{n} \right\rvert
  \right),
\end{equation}
where $Y \geq 1$ tends slowly to infinity with $k$,
$\lambda_\phi$, and $\lambda_f$ are the normalized Fourier
coefficients of $\phi$ and $f$ respectively, $\kappa_{\phi,\infty}(y) = 2
y^{1/2} K_{i r}(2 \pi y)$ for $y > 0$ if $\tfrac{1}{4}+r^2$ is
the Laplace eigenvalue of $\phi$, and the sum is taken over
triples $(l,m,n) \in \mathbb{Z}^3$ for which $0 \neq |l| <
Y^{1+\eps}$, $m > 0, n > 0$, $m - n = l$ and $\max(m,n) \asymp Y
k$ (with the last condition imposed by a normalized smooth truncation).

We exploit (in Lemma \ref{lem:j-bound-legit} and Corollary
\ref{prop:bound-integral}; see also 
remark \ref{rmk:comparison}) what amounts to the overwhelming
decay of the Bessel factor $\kappa_{\phi,\infty}(\dotsb)$ in the
higher-dimensional generalization of
\eqref{eq:refined-holowinsky} when $m,n$ lie in the outskirts of
the region $\mathcal{R}$; the simple proof that we give amounts
to some amusing inequalities satisfied by the hypergeometric
function and ratios of pairs of Gamma functions (see \S
\ref{sec:bounds-an-integral}).  In this way we reduce to
bounding shifted sums of the form
\eqref{eq:holow-shifted-sums-intro} taken over $\mathfrak{o}
\cap \mathcal{R} '$ with $\mathcal{R} '$ the much smaller region
bounded by the hyperbola $\{ x_1 \dotsb x_{d} = X \}$ and the
hyperplanes $\{ x_i = k_i Y^{1/d} / U \}$ with $X = k_1 \dotsb
k_{d} Y$ and $U = \exp(\log(X)^\eps)$.  The volume of
$\mathcal{R}'$ is merely $ \approx X \log(U)^{d-1} = X
\log(X)^{\eps '}$ with $\eps ' = (d-1) \eps$, and this
arbitrarily small logarithmic power $\log(X)^{\eps '}$ is
negligible in seeking estimates of type
\eqref{eq:holow-bound-shifted-sums-intro} and
\eqref{eq:holow-bound-iw} which already contain such a factor.  The rest of our argument proceeds
essentially as it did for Holowinsky
upon replacing his Mellin transforms on $\mathbb{R}_+^*$
by Mellin transforms on certain quotients
of the idele class group of $\mathbb{F}$, although some new features
do arise (e.g., when $\mathbb{F}$ has general class number we
must consider Hilbert modular varieties having multiple
connected components and exclude certain dihedral forms from our
analysis).  We elaborate on these last few paragraphs in
successively greater detail in \S\ref{sec:brief-revi-holow} and
\S\ref{sec:key-arguments-our}.

\subsection{Plan for the paper}
In \S\ref{sec:preliminaries} we introduce notation that will
allow us to speak meaningfully about automorphic forms over
totally real fields.
In \S\ref{sec:brief-revi-holow} we review
the work of Holowinsky and Soundararajan
over $\mathbb{F} = \mathbb{Q} $
and reduce the proof of our main result
Theorem \ref{thm:1} to that of a
generalization (Theorem \ref{thm:holow})
of Holowinsky's bound \eqref{eq:holow-bound-iw}.
The heart of our paper is \S\ref{sec:key-arguments-our},
in which we prove Theorem \ref{thm:holow}
assuming some independent technical results
that we relegate to
\S\ref{sec:reduct-shift-sums},
\S\ref{sec:bounds-shifted-sums},
\S\ref{sec:reduct-from-class}
and
\S\ref{sec:bounds-an-integral}.

\subsection{Acknowledgements}
\thanks{We thank Dinakar Ramakrishnan for suggesting this
  problem and for his very helpful feedback and comments on
  earlier drafts of this paper.  We thank Fokko van de Bult for
  a conversation that led to a strengthening and simplification
  of the proof of Lemma \ref{lem:technical-1}.  We thank Roman
  Holowinsky, Philippe Michel, Peter Sarnak, and K. Soundararajan for their
  encouragement. We thank the referee for the careful
  reading and comments that
  have helped improve our exposition.
  This work represents part of the author's doctoral
  dissertation written at the California
  Institute of Technology.}
\section{Preliminaries}\label{sec:preliminaries}

\subsection{Number fields}\label{sec:number-fields}
Let ${\mathbb{F}}$ be a totally real number field, $\mathbb{A}$ its
adele ring, $\mathbb{A}_f \subset \mathbb{A}$ the subring of
finite
adeles, $I_{\mathbb{F}}$ the group of fractional ideals in
${\mathbb{F}}$,
$\mathbb{F}_\infty = \mathbb{F} \otimes_\mathbb{Q} \mathbb{R}$,
$0 \neq e_{\mathbb{F}} \in \Hom(\mathbb{A}/{\mathbb{F}}, S^1)$ the standard
nontrivial additive character
(i.e., normalized so that its restriction $e_{\mathbb{F}_\infty}$
to $\mathbb{F}_{\infty} = \mathbb{F}_{\infty} \times \{0\}
\subset
\mathbb{F}_{\infty} \times \mathbb{A}_f = \mathbb{A}$ is given by
$e_{\mathbb{F}_\infty}(x) = e^{2 \pi i \Tr(x)}$),
${\mathbb{F}}_{\infty_+}^*$ the
connected component of the identity in ${\mathbb{F}}_\infty^*$,
$\mathfrak{o}$ the ring of integers in ${\mathbb{F}}$, $\hat{\mathfrak{o}}^* =
\prod_{v < \infty} \mathfrak{o}_v^* < \mathbb{A}_f^*$
the maximal compact subgroup of the finite ideles, and $\mathfrak{o}_+^* =
\mathfrak{o}^* \cap {\mathbb{F}}_{\infty_+}^*$ the group of totally
positive units of $\mathfrak{o}$, which is free abelian of rank
$[\mathbb{F}:\mathbb{Q}]-1$.
Let
$C_\mathbb{F} =
\mathbb{F}^* \backslash \mathbb{A}^*$ denote the
idele class group of $\mathbb{F}$
and
$C_\mathbb{F}^1 \leq C_{\mathbb{F}}$
the (compact) kernel of the adelic absolute value.

Let $\div \alpha \in
I_{\mathbb{F}}$ denote the fractional ideal generated by
an idele $\alpha \in \mathbb{A}^*$
and $\norm(\mathfrak{a})$ the (absolute) norm
of a fractional ideal $\mathfrak{a}$.
Let $\mathfrak{d}$ be the different of $\mathbb{F}$, so that
$\mathfrak{d}^{-1}$ is the dual of $\mathfrak{o}$ with respect
to the bilinear form $\mathbb{F} \times \mathbb{F} \ni (x,y) \mapsto e_{\mathbb{F}}(x
y)$
and $\Delta_\mathbb{F} = \norm(\mathfrak{d})$ is the discriminant
of $\mathbb{F}$.
Let $h(\mathbb{F})$ be the (finite) narrow class number of
$\mathbb{F}$ and $\mathfrak{z}_1, \dotsc,
\mathfrak{z}_{h(\mathbb{F})}$ a set of representatives
for the group of narrow ideal classes.
Choose finite ideles 
$d_{\mathbb{F}}, z_1, z_2, \dotsc, z_{[\mathbb{F}:\mathbb{Q}]} \in
\mathbb{A}_f^*$ such that
$\div d_{\mathbb{F}} = \mathfrak{d}$ and
$\div z_j = \mathfrak{z}_j$ for $j = 1, \dotsc,
h(\mathbb{F})$.
Then we have natural identifications
\begin{equation}\label{eq:100}
  \mathbb{A}^* = \sqcup_{j=1}^{h(\mathbb{F})}
  {\mathbb{F}}^*({\mathbb{F}}_{\infty+}^* \times
  z_j^{-1} \widehat{\mathfrak{o}}^*),
  \quad
  {\mathbb{F}}^* \backslash \mathbb{A}^* / \hat{\mathfrak{o}}^*
  = \sqcup_{j=1}^{h(\mathbb{F})} \left(
    ({\mathbb{F}}_{\infty_+}^* / \mathfrak{o}^*_+)
    \times z_j^{-1} \right).
\end{equation}
We let $\mathfrak{p}$ denote a typical prime ideal of
$\mathfrak{o}$
and $v$ a typical place of $\mathbb{F}$.

\subsection{Asymptotic notation}\label{sec:asymptotic-notation}
We use the asymptotic notation $\ll, \asymp$, $O()$ in the
strong sense that certain inequalities should hold for all
values of the parameters under consideration and not merely
eventually with respect to some limit.  For instance, we write
$f(x,y,z) \ll_{x,y} g(x,y,z)$ to indicate that there exists a
positive real $C(x,y)$, possibly depending upon $x$ and $y$ but
not upon $z$, such that $| f(x,y,z)| \leq C(x,y) |g(x,y,z)|$ for
all $x,y$ and $z$ under consideration; here $C(x,y)$ is called
an \emph{implied constant}.  We write $f(x,y,z) =
O_{x,y}(g(x,y,z))$ synonymously for $f(x,y,z) \ll_{x,y}
g(x,y,z)$ and write $f(x,y,z) \asymp_{x,y} g(x,y,z)$
synonymously for $f(x,y,z) \ll_{x,y} g(x,y,z) \ll_{x,y}
f(x,y,z)$.  On the other hand, the notation $f(x) = o(g(x))$
only
makes sense in the context of a limit, and we give it the
standard meaning $f(x)/g(x) \rightarrow 0$.

We regard the number field ${\mathbb{F}}$ as fixed, so that any
implied constants may depend on it without mention.  We
similarly regard the choice of narrow ideal class
representatives $\mathfrak{z}_1, \dotsc,
\mathfrak{z}_{h(\mathbb{F})}$ as fixed.  We let $\eps \in
(0,0.01)$ denote a sufficiently small parameter and $A \geq 100$
a sufficiently large parameter, which we allow to assume
finitely many distinct values throughout our analysis.  We allow
our implied constants to depend on $\eps$ and $A$ without
mention.

\subsection{Real embeddings}\label{sec:real-embeddings}
Set $d = [\mathbb{F}:\mathbb{Q}]$ for now.
An ordering
on the real embeddings $\infty_1, \dotsc,
\infty_{d}$ of ${\mathbb{F}}$ determines
a linear inclusion ${\mathbb{F}} \hookrightarrow
\mathbb{R}^{d}$
(the Minkowski embedding), which we fix.
For $x
\in \mathbb{R}^{d}$ write $x_i$ for its
$i$th component, so that $x_i = x^{\infty_i}$ when $x \in
{\mathbb{F}}$.  For $x,y \in
\mathbb{R}^{d}$ and $\alpha \in
\mathbb{R}_{>0}^{d}$ we define
$\mathbf{max}(x,y), \mathbf{min}(x,y), |x| \in
\mathbb{R}^{d}$ and $x^\alpha \in
\mathbb{R}$ by
\begin{align*}
  \mathbf{max}(x,y) &= (\max(x_1,y_1),\dotsc,\max(x_d,y_d)), \\
  \mathbf{min}(x,y) &= (\min(x_1,y_1),\dotsc,\min(x_d,y_d)), \\
  |x| &= (|x_1|,\dotsc,|x_d|), \\
  x^\alpha &= x_1^{\alpha_1} \dotsb x_d^{\alpha_d}.
\end{align*}
These definitions apply in particular when $x, y \in
{\mathbb{F}} \hookrightarrow
\mathbb{R}^{d}$.
We write simply
\[
\mathbf{1} = (1,\dotsc,1),
\quad 
\mathbf{0} = (0,\dotsc,0),
\]
so that $x^{\mathbf{1}} = x_1 \dotsb x_d$ for $x \in
\mathbb{R}^d$.  We extend the Gamma function multiplicatively to
$\vecgamma : (\mathbb{C} - \mathbb{Z}_{\leq 0})^d \rightarrow
\mathbb{C}$ by the formula $\vecgamma(z) = \Gamma(z_1) \dotsb
\Gamma(z_d)$ for $z \in (\mathbb{C} - \mathbb{Z}_{\leq 0})^d$.
As an example of our notation, for $k = (k_1,\dotsc,k_d)
\in (2 \mathbb{Z}_{\geq 1})^d$
we have
\[
\frac{(4 \pi
  \mathbf{1})^{k-\mathbf{1}}}{\vecgamma(k-\mathbf{1})}
=
\frac{(4 \pi)^{k_1-1}}{\Gamma(k_1-1)}
\dotsb
\frac{(4 \pi)^{k_d-1}}{\Gamma(k_d-1)}.
\]
We extend the relations $R \in \{ <, \leq , \geq , > \}$
componentwise to partial orders on
$\mathbb{R}^{d}$, writing $x \, R \, y$ to
denote that $x_i \, R \, y_i$ for all $i \in \{1, \dotsc,
d\}$; in particular, $x > \mathbf{0}$
signifies that $x_i > 0$ for all $i$, i.e., that $x$ is totally
positive.

\subsection{Groups}\label{sec:groups}
Let $G = \GL(2)/\mathbb{Q}$ with the usual subgroups
\[
B = \{
\left( \begin{smallmatrix} * & * \\ & *
  \end{smallmatrix} \right) \}, \quad 
N = \{ \left( \begin{smallmatrix} 
    1 & * \\ 
    & 1 
  \end{smallmatrix} \right) \} 
, \quad 
A = \{ \left( \begin{smallmatrix} 
    * &  \\ 
    & * 
  \end{smallmatrix} \right) \}, \quad 
Z = \{ \left( \begin{smallmatrix} 
    z &  \\ 
    & z 
  \end{smallmatrix} \right) \}
\]
and the accompanying notation
\[
n(x) = \left( \begin{smallmatrix} 
    1 & x \\ 
    & 1 
  \end{smallmatrix} \right)
\in N(\mathbb{A}),
\quad a(y) = \left( \begin{smallmatrix} y & \\ & 1
  \end{smallmatrix} \right) \in A(\mathbb{A})
\]
for $x \in \mathbb{A}$ and $y \in \mathbb{A}^*$.
Put ${\mathbf{X}} = Z(\mathbb{A}) G({\mathbb{F}})
\backslash G(\mathbb{A})$.

Let $K_\infty = \SO(2)^{[\mathbb{F}:\mathbb{Q}]}$ be the standard maximal
compact (connected) subgroup of $G(\mathbb{F}_\infty)$,
let
\[K_{\fin} = \prod_{v<\infty}
\left\{ 
  \left(
    \begin{smallmatrix}
      a &b\\
      c&d
    \end{smallmatrix}
  \right) \in G(\mathbb{F}_v) : a,d \in \mathfrak{o}_v, b \in
  \mathfrak{d}_v^{-1}, c \in \mathfrak{d}_v \right\}),
\]
and let $K = K_\infty \times K_{\fin}$.  Then $K$ is the
conjugate by $a(1 \times d_{\mathbb{F}}^{-1})$ of the standard maximal
compact subgroup of $G(\mathbb{A})$.  Our choice of $K_{\fin}$
follows Shimura \cite{MR507462} and is convenient because
the restriction
to $G(\mathbb{F}_{\infty})$
of a right-$K_{\fin}$-invariant automorphic form on
$G(\mathbb{A})$
has a Fourier expansion indexed by
the ring of integers $\mathfrak{o}$ rather than
by the inverse different $\mathfrak{d}^{-1}$.

By the Iwasawa decompositon
$G(\mathbb{A}) = N(\mathbb{A}) A(\mathbb{A}) K$,
we may define a function on $G(\mathbb{A})$
by prescribing the values it
takes on elements of the form $g = n(x) a(y) k z$ with $x \in
\mathbb{A}$, $y \in \mathbb{A}^*$, $k \in K$, and $z \in
Z(\mathbb{A})$, provided that these values do not depend upon
the choice of $x,y,k,z$ in expressing $g = n(x) a(y) k z$.

\subsection{Measures}\label{sec:measures}
We normalize Haar measures on the locally compact groups
$\mathbb{A}$, $\mathbb{A}^*$,
and $K$
by requiring that
\[
\vol(\mathbb{A}/\mathbb{F}) =  \vol( (1,e)^{[\mathbb{F}:\mathbb{Q}]} \times
\hat{\mathfrak{o}}^*)
= \vol(K) = 1.
\]
We give $\mathbb{A}/\mathbb{F}$ and $C_{\mathbb{F}} = \mathbb{A}^*/\mathbb{F}^*$
the quotient measures defined with respect to the counting
measures on the discrete subgroups $\mathbb{F}$, $\mathbb{F}^*$;
more generally we give discrete groups such as $N(\mathbb{F}),
B(\mathbb{F}), A(\mathbb{F})$, and $G(\mathbb{F})$  the counting
measure and normalize accordingly the Haar measures
on quotients thereof.  We
normalize the Haar measure on
$Z(\mathbb{A}) \backslash G(\mathbb{A})$ by requiring that
\begin{equation}\label{eq:31}
  \int_{Z(\mathbb{A}) B(\mathbb{Q}) G(\mathbb{A})}
  \phi 
  = \int_{x \in \mathbb{F} \backslash \mathbb{A}}
  \int_{y \in \mathbb{F}^* \backslash \mathbb{A}^*}
  \int_{k \in K}
  \phi(n(x) a(y) k) \, d x \, \frac{d^\times y}{|y|_\mathbb{A} } \, d k
\end{equation}
for all compactly supported continuous functions
$\phi$ on $Z(\mathbb{A}) B(\mathbb{Q}) \backslash G(\mathbb{A})$.
This choice defines a quotient measure $\mu$ on $\mathbf{X}
= Z(\mathbb{A}) G(\mathbb{F}) \backslash G(\mathbb{A})$.
Finally, we choose a Haar measure on $C_{\mathbb{F}}^1$
so that the corresponding quotient measure
on $C_{\mathbb{F}} / C_{\mathbb{F}}^1 \cong \mathbb{R}_+^*$
is the standard Haar measure $d^\times t = t^{-1} \, d t$.

\subsection{Characters}\label{sec:characters}
We introduce some notation related to the
Fourier transform on the idele class group $C_{\mathbb{F}} =
\mathbb{F}^* \backslash \mathbb{A}^*$, and in particular
its ``unramified'' quotient $C_{\mathbb{F}} / \hat{\mathfrak{o} }^*$.

Let $\mathfrak{X}(H)$ denote the group of (quasi-)characters
on a topological abelian group $H$, thus $\mathfrak{X}(H)$ is
the group of continuous homomorphisms $\chi : H \rightarrow
\mathbb{C}^*$; a character having image in
the circle group $S^1$
will be called a \emph{unitary} character.
For a quotient group $H'' = H / H'$ with $H'$
closed in $H$, identify $\mathfrak{X}(H'')$
with the subgroup of $\mathfrak{X}(H)$ consisting of those
characters having trivial restriction to $H'$.

Let the group $\mathfrak{X}(C_{\mathbb{F}})$ of idele class
characters on $\mathbb{F}$ carry the structure of a complex
manifold whose connected components are the cosets of the
subgroup $\mathfrak{X}(C_{\mathbb{F}}/C_{\mathbb{F}}^1) =\{|.|^s
: s \in \mathbb{C} \}$ on which the complex structure is given
by $s$; here $|.| = |.|_{\mathbb{A}}$ is the adelic absolute
value $C_{\mathbb{F}} \ni (x_v)_v \mapsto \prod |x_v|_v \in
\mathbb{R}^*_+$ with $|.|_v$ the standard absolute value on the
completion $\mathbb{F}_v$ of $\mathbb{F}$, so that
multiplication by $x_v$ scales the Haar measure on
$\mathbb{F}_v$ by $|x_v|_v$.

Since $C_{\mathbb{F}}^1$ is compact, for each $\chi \in
\mathfrak{X}(C_{\mathbb{F}})$ we have $|\chi| = |.|^\sigma$
for some $\sigma \in \mathbb{R}$, which we call the \emph{real
  part} of $\chi$ and denote by $\sigma = \Re(\chi)$.  Let
$\mathfrak{X}(C_{\mathbb{F}})(c)$ denote the set of
idele class characters having real part $c$.

Let
\[
\mathfrak{X}(C_{\mathbb{F}})[2]
:= \{ \chi_0 \in \mathfrak{X}(C_{\mathbb{F}}): \chi_0^2 = 1
\}
\]
denote the group of \emph{quadratic} idele class characters.
This is not to be confused with the set
$\mathfrak{X}(C_{\mathbb{F}})(2)$ of idele class characters
$\chi$ having real part $\Re(\chi) = 2$.

Let $\chi_{\infty} \in
\mathfrak{X}(\mathbb{F}_{\infty}^*)$ denote
the restriction of an idele
class character $\chi \in \mathfrak{X}(C_{\mathbb{F}})$ to
$\mathbb{F}_{\infty}^*$.
Then $\chi_{\infty}$ is of the form
\begin{equation}\label{eq:33}
  y \mapsto \prod_{i=1}^{[\mathbb{F}:\mathbb{Q}]}
  \sgn(y_j)^{\eps_j} |y_j|^{i r_j}
  \quad \text{ if }
  y =
  (y_1,\dotsc,y_{[\mathbb{F}:\mathbb{Q}]})
  \in (\mathbb{R}^{[\mathbb{F}:\mathbb{Q}]})^* =
  \mathbb{F}_{\infty}^*
\end{equation}
for some $\eps_j \in \{0, 1\}$ and $r_j \in \mathbb{C}$; the
character $\chi_\infty$ is unitary if and only if each $r_j \in
\mathbb{R}$.  For a place $v$ of $\mathbb{F}$, let $\chi_v$ be
the restriction of $\chi$ to $\mathbb{F}_{v}^* \hookrightarrow
\mathbb{A}^*$; in particular, $\chi_{\infty_j} =[y_j \mapsto
\sgn(y_j)^{\eps_j} |y_j|^{i r_j}]$ is the
restriction of $\chi_\infty$ as above to the $j$th factor of
$(\mathbb{R}^{[\mathbb{F}:\mathbb{Q}]})^*$,

The group $\mathfrak{X}(C_{\mathbb{F}}/\hat{\mathfrak{o} }^*)$
of unramified idele class characters $\chi$
is a subgroup of the group $\mathfrak{X}(C_{\mathbb{F}})$
of all idele class characters;
here and elsewhere unramified means ``unramified at all finite
places.''
Set
$\mathfrak{X}(C_{\mathbb{F}}/\hat{\mathfrak{o} }^*)(c)
:= \mathfrak{X}(C_{\mathbb{F}}/\hat{\mathfrak{o} }^*)
\cap \mathfrak{X}(C_{\mathbb{F}})(c)$
for any $c \in \mathbb{R}$
and $\mathfrak{X}(C_{\mathbb{F}}/\hat{\mathfrak{o} }^*)[2]
:= \mathfrak{X}(C_{\mathbb{F}}/\hat{\mathfrak{o} }^*)
\cap \mathfrak{X}(C_{\mathbb{F}})[2]$.

Let
\[
\xi_{\mathbb{F}} :
\mathfrak{X}(C_{\mathbb{F}} / \hat{\mathfrak{o}}^*)
\rightarrow \mathbb{P}^1(\mathbb{C})
\]
be the (completed) Dedekind zeta function,
defined for unramified idele class characters
of real part $\Re(\chi) > 1$ by the Euler product
$\xi_{\mathbb{F}}(\chi) = \prod_v \zeta_v(\chi_v)$
and in general by meromorphic continuation,
where $\zeta_\mathfrak{p}(v) =
(1-\chi_\mathfrak{p}(\varpi_\mathfrak{p}))^{-1}$
for $\varpi_\mathfrak{p}$ a generator of $\mathfrak{p} \subset
\mathbb{F}_\mathfrak{p}$
and $\zeta_{\infty_j}(\chi_{\infty_j}) = \Gamma_{\mathbb{R}}(i
r_j + \eps_j)$
if $\chi_\infty$ is given by \eqref{eq:33};
here $\Gamma_{\mathbb{R}}(s) = \pi^{-s/2} \Gamma(s/2)$.
For $s \in \mathbb{C}$ let $\xi_{\mathbb{F}}(s)
:= \xi_{\mathbb{F}}(|.|^s)$, which agrees with the usual definition.
Hecke proved that $\xi_{\mathbb{F}}$ is holomorphic away
from its simple pole at $\chi = |.|$ and satisfies a functional
equation relating its values
at $\chi$ and $|.| \chi^{-1}$.

Let $\Psi \in
C_c^\infty(C_{\mathbb{F}}/ \hat{\mathfrak{o} }^*)$
be a test function.
For each character $\chi \in
\mathfrak{X}(C_{\mathbb{F}}/\hat{\mathfrak{o} }^*)$
let $\Psi^\wedge(\chi)$ be the Fourier-Mellin transform
of $\Psi$ at $\chi$ normalized 
so that the inversion formula
\begin{equation}
  \Psi(y) = \int_{\mathfrak{X}(C_{\mathbb{F}}/\hat{\mathfrak{o} }^*)(c)}
  \Psi^\wedge(\chi) \chi(y) \, \frac{d \chi }{2 \pi i}
\end{equation}
holds,
where $\int_{\mathfrak{X}(C_{\mathbb{F}}/\hat{\mathfrak{o}
  }^*)(c)}$
denotes the contour integral over unramified idele class
characters $\chi$ having real part $c > 1$
taken in the usual vertical sense,
precisely
\[
\int_{\mathfrak{X}(C_{\mathbb{F}}/\hat{\mathfrak{o} }^*)(c)}
\Psi^\wedge(\chi) \chi(y) \, \frac{d \chi }{2 \pi i}
:=
\sum_{
  \chi_0 \in
  \frac{
    \mathfrak{X}
    (
    C_{\mathbb{F}}/\hat{\mathfrak{o}}^*
    )(0)
  }
  {
    \mathfrak{X}(C_{\mathbb{F}} / C_{\mathbb{F}}^1)
  }
}
\int_{(c)}
\Psi^\wedge(\chi_0 |.|^s) \chi_0(y) |y|_{\mathbb{A}}^s
\, \frac{d s}{2 \pi i},
\]
where $\int_{(c)}$ denotes the vertical contour integral
taken over $\Re(s) = c$ 
from $c - i \infty$ to $c + i \infty$,
and as representatives for the quotient
$\mathfrak{X}(C_{\mathbb{F}}/\hat{\mathfrak{o} }^*)
/ \mathfrak{X}(C_{\mathbb{F}} / C_{\mathbb{F}}^1)$
one may take the image of
the discrete group $\mathfrak{X}(C_{\mathbb{F}}^1/\hat{\mathfrak{o} }^*)$
under pullback by a section of the inclusion $C_{\mathbb{F}}^1
\hookrightarrow C_{\mathbb{F}}$.
By our normalization of measures (see \S\ref{sec:measures}),
the forward transform is given explicitly by
\begin{equation}
  \Psi^\wedge(\chi)
  = \frac{1}{\vol(C_{\mathbb{F}}^1)}
  \int_{C_{\mathbb{F}}} \Psi(y) \chi^{-1}(y) \, d^\times y.
\end{equation}

The analytic conductor \cite{MR1826269} of an unramified idele
class character $\chi \in
\mathfrak{X}(C_{\mathbb{F}}/\hat{\mathfrak{o} }^*)$
having archimedean component \eqref{eq:33}
is defined
to be
\begin{equation}
  C(\chi)
  = \prod_{i=1}^{[\mathbb{F}:\mathbb{Q}]}
  (3 + |r_j|);
\end{equation}
the number $3$ is unimportant and present only so that
$\log C(\chi)$ is never too small.
Repeated ``partial integration'' shows that
$\Psi^\wedge(\chi) \ll_{\Psi,A} C(\chi)^{-A}$
for any test function $\Psi \in
C_c^\infty(C_{\mathbb{F}}/\hat{\mathfrak{o} }^*)$
and any positive integer $A$,
uniformly for $\Re(\chi)$ in any bounded set.
Concretely, we have
natural short exact sequences
\[
1 \rightarrow \mathbb{F}_{\infty+}^* /  \mathfrak{o}^*_+
\rightarrow C_{\mathbb{F}} / \hat{\mathfrak{o} }^*
\rightarrow \Cl_{\mathbb{F}}^+ \rightarrow 1,
\]
and
\[
1 \rightarrow \mathbb{F}_{\infty+}^1 / \mathfrak{o}_+^*
\rightarrow \mathbb{F}_{\infty+}^* / \mathfrak{o}_+^*
\xrightarrow{x \mapsto x^{\mathbf{1}}} \mathbb{R}_+^* \rightarrow 1,
\]
where $\Cl_{\mathbb{F}}^+ = C_{\mathbb{F}} /
(\mathbb{F}_{\infty+}^*
\times \hat{\mathfrak{o} }^*)$ is the (finite) narrow
class group of $\mathbb{F}$
and $\mathbb{F}_{\infty+}^1$ is the subgroup
$\{(x_i) : \prod x_i = 1\}$ of $\mathbb{F}_{\infty+}^*$.
Thus $C_{\mathbb{F}} /
\hat{\mathfrak{o} }^*$
is an extension of a finite group
by
an extension of
$\mathbb{R}^*_+$ by a compact torus,
so the assertion
$\Psi^\wedge(\chi) \ll_{\Psi,A} C(\chi)^{-A}$
reduces to the familiar decay properties
of the Fourier transform
of a test function on
a finite product of Euclidean lines and circles.

\subsection{Fourier expansions}\label{sec:fourier-expansions}
Suppose that
$\phi : \mathbf{X} \rightarrow \mathbb{C}$ is continuous
and right-$K$-invariant.
By the Iwasawa decomposition, $\phi$ is determined
by the values $\phi(n(x) a(y))$
for $x \in \mathbb{A}, y \in \mathbb{A}^*$.
If $\phi$ is assumed merely to be right-$K_{\fin}$-invariant
but transforms under a unitary character of $K_{\infty}$,
then $|\phi|^2$ is still determined by the values
$\phi(n(x) a(y))$.
In either case,
the left-$B(\mathbb{F})$-invariance of $\phi$
implies a Fourier expansion
\begin{equation}\label{eq:17}
  \phi(n(x) a(y))
  = \phi_0(y) + \sum_{n \in \mathbb{F}^*}
  \kappa_\phi(n y) e_{\mathbb{F}}(n x)
\end{equation}
for some functions $\phi_0$ on $C_{\mathbb{F}} /
\hat{\mathfrak{o}}^* = \mathbb{F}^* \backslash \mathbb{A}^* /
\hat{\mathfrak{o}}^*$ and $\kappa_\phi$ on $\mathbb{A}^* /
\hat{\mathfrak{o}}^*$ (see \cite{MR1610485}).

We say that the Fourier expansion \eqref{eq:17}
of $\phi$ is \emph{factorizable}
if for each $y \times z \in \mathbb{F}_{\infty}^*
\times \mathbb{A}_f^* = \mathbb{A}^*$
we have
\begin{equation}
  \kappa_\phi(y \times z)
  = \kappa_{\phi,\infty}(y)
  \frac{\lambda_\phi(\div z)}{\norm(\div z)^{1/2}},
\end{equation}
where $\lambda_\phi : I_{\mathbb{F}}
\rightarrow \mathbb{C}$
is a weakly multiplicative function supported
on the monoid of integral ideals
and $\kappa_{\phi,\infty}(y)
= \prod_{j=1}^{[\mathbb{F}:\mathbb{Q}]}
\kappa_{\phi,\infty_j}(y_j)$
for some functions $\kappa_{\phi,\infty_j} : \mathbb{R}^*
\rightarrow \mathbb{C}$.

\subsection{Automorphic forms}\label{sec:automorphic-forms}
We shall consider various kinds of automorphic forms
throughout this paper.  In this
section we give them convenient names and state their relevant
properties.
\subsubsection{Holomorphic eigencuspforms}\label{sec:holom-eigenc}
By a \emph{holomorphic eigencuspform} $f : \mathbf{X} \rightarrow
\mathbb{C}$ of weight $k =
(k_1,\dotsc,k_{[\mathbb{F}:\mathbb{Q}]})$
(here and always each $k_j$ is a \emph{positive} even integer,
for simplicity)
we mean an arithmetically normalized cuspidal
holomorphic Hilbert modular form of weight $k$, full level, and
trivial central character, that is furthermore an eigenfunction
of the algebra of Hecke operators.
Precise definitions in both the classical and adelic languages
appear in Shimura's paper \cite{MR507462};
for our purposes, it is necessary
to know only that $f$ is right $K_{\fin}$-invariant, transforms under a
(specific) unitary character of $K_{\infty}$, and has a factorizable
Fourier expansion \eqref{eq:17} with $f_0 \equiv 0$ and
\begin{equation}
  \kappa_{f,\infty_j}(y)
  =
  \begin{cases}
    y^{k_j/2} e^{- 2 \pi y} & \text{ for } y > 0, \\
    0 & \text{ for } y < 0
  \end{cases}
\end{equation}
for each infinite place
$\infty_j$ of $\mathbb{F}$.
The ``Ramanujan bound'' for $f$ \cite{MR2327298}
asserts\footnote{the parity conditions on the weight of $f$
  are satisfied
  because $f$ has trivial central character,
  hence the $k_i$ are all even}
that
$|\lambda_f(\mathfrak{a})| \leq \tau(\mathfrak{a})$
for each integral ideal $\mathfrak{a}$,
where $\tau$ is the
divisor function (multiplicative, $\mathfrak{p}^k \mapsto
k+1$); this improves an earlier result of Brylinski-Labesse,
which asserts that $|\lambda_f(\mathfrak{p})| \leq 2$ for a full density
set of primes $\mathfrak{p}$.

To $f$ and an unramified idele class character
$\chi \in \mathfrak{X}(C_{\mathbb{F}} / \hat{\mathfrak{o} }^*)$
of sufficiently large real part
we associate the finite part of the adjoint
$L$-function
\[
L(\ad f, \chi) = \prod_{\mathfrak{p}}
L_\mathfrak{p}(\ad f,\chi)
\]
and its completion $\Lambda(\ad f, \chi) = L_\infty(\ad f, \chi)
L(\ad f, \chi ) = \prod_v L_v(\ad f, \chi)$, where the local
factors are as in \cite[\S 3.1.1]{watson-2008}.  It is known
\cite{Sh75,MR533066} that $\chi \mapsto L(\ad f, \chi)$
continues meromorphically to a function on
$\mathfrak{X}(C_{\mathbb{F}} / \hat{\mathfrak{o} }^*)$ whose
only possible poles are simple and at $\chi = \chi_0 |.|$ for
$\chi_0 \in \mathfrak{X}(C_{\mathbb{F}}/\hat{\mathfrak{o}
}^*)[2]$ a quadratic character.  Call
$f$ \emph{nondihedral} if $L(\ad f, \cdot) :
\mathfrak{X}(C_{\mathbb{F}}/\hat{\mathfrak{o} }^*) \rightarrow
\mathbb{P}^1(\mathbb{C})$ is entire; this is known to be the
case precisely when $f$ is not induced from an idele class
character of a quadratic extension of $\mathbb{F}$
\cite{MR533066,MR540902}.  Note that unlike when $\mathbb{F} =
\mathbb{Q}$ or $h(\mathbb{F}) = 1$, in general (e.g.,
for $\mathbb{F} = \mathbb{Q}(\sqrt{3})$) there may exist
dihedral cusp forms of full level and trivial central character,
which we shall exclude from our analysis.

\subsubsection{Maass eigencuspforms}\label{sec:maass-eigencuspforms}
By a \emph{Maass eigencuspform} $\phi : \mathbf{X} \rightarrow
\mathbb{C}$ of Laplace eigenvalue $(\tfrac{1}{4} + r_1^2,
\dotsc, \tfrac{1}{4} + r_{[\mathbb{F}:\mathbb{Q}]}^2) \in
\mathbb{R}_{>0}^{[\mathbb{F}:\mathbb{Q}]}$
and parity $(\eps_1,\dotsc,\eps_{[\mathbb{F}:\mathbb{Q}]})
\in \{0,1\}^{[\mathbb{F}:\mathbb{Q}]}$
we mean an arithmetically normalized
Hilbert-Maass
cusp form
on $\mathbf{X}$ of given Laplace eigenvalues
and parity, full level and
trivial central character, that is furthermore an eigenfunction
of the algebra of Hecke operators.
For our purposes this means that
$\phi$ is right-$K$-invariant
and has a factorizable
Fourier expansion \eqref{eq:17}
with $\phi_0 \equiv 0$
and
\begin{equation}
  \kappa_{\phi,\infty_j}(y) = 2 |y|^{1/2} K_{i r_j}(2 \pi |y|)
  \sgn(y)^{\eps_j}
\end{equation}
for each infinite place $\infty_j$ and all $y \in \mathbb{R}^*$;
here $K_{i r_j}$ is the modified Bessel function
of the second kind.
The trivial ``Hecke bound''
asserts that
$\lambda_\phi(\mathfrak{a}) \leq
\tau(\mathfrak{a}) \norm(\mathfrak{a})^{1/2}$.
The ``Rankin-Selberg bound,''
also known as the ``Ramanujan bound on average,''
asserts that
\begin{equation}\label{eq:27}
  \sum_{\norm(\mathfrak{a}) \leq x}
  | \lambda_\phi(\mathfrak{a})|^2
  \ll_\phi x
\end{equation}
and follows
as in \cite[\S 8.2]{MR1942691}
from the analytic properties
of the Rankin-Selberg $L$-series
attached to $\phi \times \phi$
\cite{Ja72}.

\subsubsection{Eisenstein series}\label{sec:eisenstein-series}
Let $\chi \in \mathfrak{X}(C_\mathbb{F} / \hat{\mathfrak{o}}^*)$
be an unramified idele class character.
Writing
$y(g) = y$ for $g = n(x) a(y) k z$,
the map
$B(\mathbb{F}) \backslash G(\mathbb{A}) \ni g \mapsto \chi(y(g))$ is well-defined.
The \emph{Eisenstein series}
\begin{equation}
  E(\chi,g) = \sum_{\gamma \in B(\mathbb{F}) \backslash G(\mathbb{F})}
  \chi(y(\gamma g))
\end{equation}
converges normally in $g$
and uniformly in $\chi$ for $\Re(\chi) \geq 1 + \delta > 0$,
and continues
meromorphically to the union of half-planes
on which $\Re(\chi) \geq \onehalf$, where
$\chi \mapsto E(\chi,\cdot)$
is holomorphic with the exception of simple poles at $\chi =
|.| \chi_0$ of locally constant residue
proportional to $g \mapsto \chi_0(\det(g))$ for each unramified \emph{quadratic}
idele class character $\chi_0 \in \mathfrak{X}(C_\mathbb{F} /
\hat{\mathfrak{o} }^*)[2]$
(see \cite{MR546600}).  The functions $E(\chi,\cdot) : g
\mapsto E(\chi,g)$ descend to $\mathbf{X} = Z(\mathbb{A})
G(\mathbb{F}) \backslash G(\mathbb{A}) $ and are
right-$K$-invariant by construction.

The scaled Eisenstein series
$\phi =
\Delta_{\mathbb{F}}^{-1} \chi(d_{\mathbb{F}})^{-2} \xi_{\mathbb{F}}(\chi^2) E(\chi,\cdot)$ admits a
factorizable
Fourier expansion \eqref{eq:17} 
with
\begin{equation}\label{eq:30}
  \phi_0(y) =
  \Delta_{\mathbb{F}}^{-1} \chi(d_{\mathbb{F}})^{-2}
  \xi_\mathbb{F}(\chi^2)
  \chi(y)
  +
  \Delta_{\mathbb{F}}^{-1/2}
  \xi_\mathbb{F}(\chi^2 |.|^{-1})
  \chi^{-1}(y) |y|,
\end{equation}
\[
\kappa_\phi(y \times z)
= \kappa_{{(\chi|.|^{-1/2})}_\infty}(y)
\frac{
  \lambda_{(\chi|.|^{-1/2})}(\div z)
}
{
  \norm(\div z)^{1/2}
}
\]
as in \S\ref{sec:fourier-expansions},
where for $\chi \in
\mathfrak{X}(C_{\mathbb{F}}/\hat{\mathfrak{o} }^*)$
with $\chi_\infty$ given by \eqref{eq:33},
we set
\begin{equation}
  \kappa_{\chi_{\infty_j}}(y)
  = 2 |y|^{1/2} K_{i r_j}(2 \pi |y|) \sgn(y)^{\eps_j},
  \quad
  \lambda_\chi(\mathfrak{p}^k)
  = \sum_{i=0}^k \chi(\mathfrak{p})^i
  \chi^{-1}(\mathfrak{p})^{k-i};
\end{equation}
for a convenient tabulation of such
Fourier expansions of Eisenstein series
see \cite{2009arXiv0904.2429B}.

If $\chi |.|^{-1/2}$ is a unitary character (equivalently,
$\Re(\chi) = \onehalf$, i.e., $\chi \in
\mathfrak{X}(C_{\mathbb{F}}/\hat{\mathfrak{o} }^*)(\onehalf)$),
call $E(\chi,g)$ a \emph{unitary Eisenstein series}; in that
case $|\lambda_{\chi|.|^{-1/2}}(\mathfrak{a})| \leq
\tau(\mathfrak{a})$.

\subsubsection{Incomplete Eisenstein series}\label{sec:incompl-eisenst-seri}
To a test function $\Psi \in
C_c^\infty(C_{\mathbb{F}}/\hat{\mathfrak{o} }^*)$
attach the \emph{incomplete Eisenstein series}
$E(\Psi,\cdot) : \mathbf{X} \rightarrow \mathbb{C}$
by the formula
\begin{equation}
  E(\Psi,g) = \sum_{\gamma \in B(\mathbb{F}) \backslash
    G(\mathbb{F})}
  \Psi(y(\gamma g))
\end{equation}
with $y(\gamma g)$ as in \S\ref{sec:eisenstein-series}.
Write $\phi = E(\Psi,\cdot)$.
We have $\Psi^\wedge(|.|) \res_{s=1} E(|.|^s,\cdot)
= \mu(\phi) / \mu(1)$ (see \S\ref{sec:holow-sound-synth}),
so by shifting the contour in the integral representation
$E(\Psi,\cdot) =
\int_{\mathfrak{X}(C_{\mathbb{F}}/\hat{\mathfrak{o} }^*)(2)}
\Psi^\wedge(\chi) E(\chi,\cdot) \, \frac{d \chi }{2 \pi i}$
to the union of lines $\Re(\chi) = \onehalf$
(see \cite{MR546600} and \cite[\S 7.3]{MR1942691}), we obtain
\begin{equation}
  \begin{split}
    E(\Psi,g)
    &= \frac{\mu(\phi)}{\mu(1)}
    + \sum_{1 \neq \chi_0 \in
      \mathfrak{X}(C_{\mathbb{F}}/\hat{\mathfrak{o} }^*)[2]}
    c_\Psi(\chi_0) \chi_0(\det g) \\
    &\quad + \int_{\mathfrak{X}(C_{\mathbb{F}}/\hat{\mathfrak{o} }^*)(1/2)}
    \Psi^\wedge(\chi) E(\chi,g) \, \frac{d \chi }{2 \pi i}
  \end{split}
\end{equation}
for some constants $c_\Psi(\chi_0) = \mu(1)^{-1} \int_{\mathbf{X}} E(\Psi,\cdot)
(\chi_0 \circ \det)$ whose precise values are
not important for our purposes.
Taking the Fourier expansions of both sides
gives
\begin{equation}\label{eq:phi0-useful-formula-0}
  \phi_0(y)
  = \frac{\mu(\phi)}{\mu(1)}
  + \sum_{1 \neq \chi_0 \in
    \mathfrak{X}(C_{\mathbb{F}}/\hat{\mathfrak{o}}^*)[2]}
  c_{\Psi}(\chi_0)
  \chi_0(y)
  + O_{\phi}(|y|^{1/2}),
\end{equation}
\begin{equation}\label{eq:phi0-useful-formula-1}
  \kappa_\phi(y \times z)
  =
  \int_{\mathfrak{X}(C_{\mathbb{F}}/\hat{\mathfrak{o} }^*)(0)}
  \frac{
    \Psi^\wedge(|.|^{1/2} \chi)
  }{
    \xi_{\mathbb{F}}(|.| \chi^2) \chi(d_{\mathbb{F}})^{-2}
  }
  \kappa_{\chi,\infty}(y)
  \frac{\lambda_{\chi}(\div z)}{\norm(\div z)^{1/2}}
  \, \frac{d \chi }{2 \pi i}.
\end{equation}

\subsection{Masses}\label{sec:masses}
Recall the measure $\mu$ defined
on the space $\mathbf{X} = Z(\mathbb{A}) G(\mathbb{F})
\backslash G(\mathbb{A})$ in \S\ref{sec:measures}.
For $\phi \in L^1(\mathbf{X},\mu)$ let
$\mu(\phi) = \int_{\mathbf{X}} \phi \, d \mu$.
To our varying nondihedral holomorphic eigencuspform $f$
we associate the finite measure $d \mu_f = |f|^2 \, d \mu$
and write accordingly $\mu_f(\phi) = \int_{\mathbf{X}} \phi
|f|^2 \, d \mu$.
In particular, writing $1$ for the constant function
on $\mathbf{X}$, we see that
$\mu(1)$ is the volume of $\mathbf{X}$ 
and $\mu_f(1)$ the mass of $f$,
i.e., its squared norm in $L^2(\mathbf{X},\mu)$.
With this notation,
the conclusion of Theorem \ref{thm:1} is that
for any compactly supported,
continuous, right-$K$-invariant function $\phi$
on $\mathbf{X}$, we have
\[
\frac{\mu_f(\phi)}{ \mu_f(1)}
\rightarrow \frac{\mu(\phi) }{ \mu(1)}
\]
as any of the weight components of $f$ tend to $\infty$.
It suffices to show this for $\phi$ a Maass
eigencuspform or incomplete Eisenstein series
as in \S\ref{sec:maass-eigencuspforms}
and \S\ref{sec:incompl-eisenst-seri}.

The special value $L(\ad f, 1)$ enters our analysis through the
Rankin-type formula
\begin{equation}\label{eq:26}
  \mu_f(1) =
  \frac{\vecgamma(k)}{c_1(\mathbb{F}) (4 \pi
    \mathbf{1})^{k-\mathbf{1}}}
  L(\ad f, 1),
  \quad c_1(\mathbb{F})
  := \frac{(4 \pi^2)^{[\mathbb{F}:\mathbb{Q}]}}{2 \Delta_{\mathbb{F}}^{3/2}}.
\end{equation}
We sketch the standard calculation.  Recall the measure normalization
\eqref{eq:31} and the choice of compact subgroup $K$ (\S\ref{sec:groups}) on which we base our definition (\S\ref{sec:eisenstein-series})
of $E(s,\cdot)$.
For $\Re(s) > 1$ we find
by unfolding that
\begin{eqnarray*}
  \mu_f(E(s,\cdot))
  &=& \int_{Z(\mathbb{A}) B(\mathbb{F})
    \backslash G(\mathbb{A})} |y(g)|_\mathbb{A}^s |f|^2(g) \, d g \\
  &=& \int_{x \in \mathbb{F} \backslash \mathbb{A}}
  \int_{y \in \mathbb{F}^* \backslash \mathbb{A}^*}
  |y|_\mathbb{A}^{s-1} |f|^2(n(x) a(y)) \, d x \, d^\times y \\
  &=& \prod_v \int_{y \in \mathbb{Q}_v^*} |y|_v^{s-1} | \kappa_f(y)
  |^2 \, d^\times y \\
  &=&
  \Lambda(\ad f, s)
  \frac{\xi_{\mathbb{F}}(s)}{\xi_{\mathbb{F}}(2 s)}
  \prod_{i=1}^{[\mathbb{F}:\mathbb{Q}]} 2^{-k_i-1}
\end{eqnarray*}
by local calculations as conveniently tabulated in \cite[\S
3.2.1]{watson-2008}.
Since the Fourier expansion \eqref{eq:30} implies
\[
\res_{s=1} E(s,\cdot)
= \Delta_{\mathbb{F}}^{-3/2} \frac{\res_{s=1}
  \xi_{\mathbb{F}}(s)}{2 \xi_{\mathbb{F}}(2)}
\]
and by definition
\cite[\S 3.1.1]{watson-2008}
\[
L_\infty(\ad f,1)
\prod_{i=1}^{[\mathbb{F}:\mathbb{Q}]}
2^{-k_i-1}
= (4 \pi ^2)^{-[\mathbb{F}:\mathbb{Q}]} \frac{\vecgamma(k)}{(4
  \pi \mathbf{1} )^{k-\mathbf{1}}},
\]
we obtain the claimed formula \eqref{eq:26}.

\section{Brief review of Holowinsky-Soundararajan}\label{sec:brief-revi-holow}
In this section we summarize the Holowinsky-Soundararajan
\cite{holowinsky-soundararajan-2008} proof
of Theorem \ref{thm:1}
when $\mathbb{F} = \mathbb{Q}$
and indicate which of their arguments require generalization
when $\mathbb{F}$ is a general totally real number field.
Their proof combines
\begin{enumerate}
\item the independent arguments of Holowinsky \cite{holowinsky-2008}, and
\item the independent arguments of Soundararajan \cite{MR2680497},
\item the joint Holowinsky-Soundararajan synthesis of (1) and (2).
\end{enumerate}
As we shall see,
Soundararajan's independent arguments
and the Holowinsky-Soundararajan synthesis
generalize painlessly, so the essential difficulty
is to generalize Holowinsky's arguments.
In this section,
$f$ is a holomorphic eigencuspform
of weight $k = (k_1,\dotsc,k_{[\mathbb{F}:\mathbb{Q}]})$.
Recall from \S\ref{sec:real-embeddings}
that $k^{\mathbf{1}} := k_1 \dotsc k_{[\mathbb{F}:\mathbb{Q}]}$,
thus when $\mathbb{F} = \mathbb{Q}$
we have $k = (k_1)$ and $k^{\mathbf{1}} = k_1$.

\subsection{Holowinsky's independent arguments}\label{sec:holow-indep-argum}
We begin by simultaneously recalling Holowinsky's main result
\cite[Cor 3]{holowinsky-2008} and stating our generalization
thereof.  Define for each holomorphic eigencuspform $f$ and each
real number $x \geq 2$ the quantities
\begin{equation}\label{eq:review:mf-defn}
  M_f(x) =
  \frac{\log(x)^{-2}}{ L(\ad f, 1)}
  \prod_{\norm (\mathfrak{p}) \leq x}
  \left( 1 + \frac{2 \lvert \lambda_f(\mathfrak{p}) \rvert}{\norm (\mathfrak{p}) } \right),
\end{equation}
\begin{equation}\label{eq:review:rf-defn}
  R_f(x)
  = \frac{x^{-1/2}}{L(\ad f, 1)}
  \sum_{\chi_0 \in \mathfrak{X}(C_{\mathbb{F}}/\hat{\mathfrak{o}
    }^*)[2]}
  \int_{(1/2)}
  \left\lvert \frac{L(\ad f, \chi_0 |.|^s )}{C(\chi_0 |.|^s)^{10}} \right\rvert
  \, | d s|.
\end{equation}
Here $C(\chi_0|.|^s) \asymp |s|^{[\mathbb{F}:\mathbb{Q}]}$
since $\chi_0$ is quadratic.
\begin{theorem}\label{thm:holow}
  Let $f$ be a nondihedral holomorphic eigencuspform
  of weight $k = (k_1,\dotsc,k_{[\mathbb{F}:\mathbb{Q}]})$.
  If $\phi$ is a Maass eigencuspform, then
  \[
  \frac{\mu_f (\phi ) }{ \mu _f (1)}
  \ll_{\phi,\eps} \log(k^{\mathbf{1}})^\eps M_f(k^{\mathbf{1}})^{1/2}.
  \]
  If $\phi$ is an incomplete Eisenstein series, then
  \[
  \frac{\mu_f (\phi ) }{ \mu _f (1)}
  - \frac{\mu (\phi ) }{ \mu (1)}
  \ll_{\phi,\eps} \log(k^{\mathbf{1}})^\eps M_f(k^{\mathbf{1}})^{1/2}
  (1 + R_f(k^{\mathbf{1}})).
  \]
\end{theorem}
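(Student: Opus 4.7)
The plan is to compute $\mu_f(\phi) = \int_{\mathbf{X}} \phi\, |f|^2 \, d\mu$ by unfolding in Iwasawa coordinates $g = n(x) a(y) k$. Right-$K$-invariance of both $\phi$ and $|f|^2$ trivializes the $k$-integration, while expanding each factor in Fourier series and integrating in $x$ over $\mathbb{F}\backslash\mathbb{A}$ imposes the matching $m - n = -l$ on Fourier indices. This produces a main term from the zeroth Fourier coefficient $\phi_0$ plus a sum over nonzero shifts $l \in \mathfrak{o}$ of integrals of the shape
\begin{equation*}
\int_{\mathbb{F}^*\backslash \mathbb{A}^*} \kappa_\phi(l y) \sum_{m - n = -l} \kappa_f(m y)\overline{\kappa_f(n y)} \, \frac{d^\times y}{|y|_\mathbb{A}}.
\end{equation*}

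For the zeroth Fourier coefficient, the Maass-cuspform case satisfies $\phi_0 \equiv 0$ and contributes nothing. In the incomplete Eisenstein case, formula \eqref{eq:phi0-useful-formula-0} writes $\phi_0$ as $\mu(\phi)/\mu(1)$ plus a finite sum of quadratic characters $\chi_0$ plus a vertical-line contour integral on $\mathfrak{X}(C_\mathbb{F}/\hat{\mathfrak{o}}^*)(1/2)$. The constant piece integrates against $|f|^2$ to produce the desired main term $(\mu(\phi)/\mu(1))\mu_f(1)$. Each quadratic-character piece unfolds to a Rankin--Selberg integral proportional to $L(\ad f, \chi_0)$, which vanishes by the \emph{nondihedral} hypothesis on $f$; this is precisely where that assumption enters. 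The remaining contour integral unfolds to an integral of central values $L(\ad f, \chi_0 |.|^s)$ on $\Re(s) = 1/2$, and after absolute bounding and normalizing by $\mu_f(1)$ via \eqref{eq:26} this produces the $R_f(k^{\mathbf{1}})$ factor appearing in the Eisenstein statement.

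For the nonzero Fourier coefficients, the crucial observation is that the archimedean integrand $\kappa_{\phi,\infty}(l y_\infty)\kappa_{f,\infty}(m y_\infty)\overline{\kappa_{f,\infty}(n y_\infty)}$ not only concentrates $y_\infty$ near the hyperbola $\{y_\infty^{\mathbf{1}} \asymp k^{\mathbf{1}}\}$ (by the holomorphic Whittaker factors $y^{k/2} e^{-2\pi y}$), but in addition the Bessel $K$-factor inside $\kappa_{\phi,\infty}$ decays rapidly once any coordinate $y_{\infty,j}$ strays from $k_j$ by more than a small logarithmic factor. This is quantified in Lemma \ref{lem:j-bound-legit} and Corollary \ref{prop:bound-integral} via inequalities for a hypergeometric function and for ratios of $\Gamma$-factors. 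The net effect is to confine the effective $(m,n)$-summation to the compressed region $\mathcal{R}'$ of volume $X \log(X)^{\eps(d-1)}$ rather than the naive $\mathcal{R}$ of volume $X \log(X)^{d-1}$. On $\mathcal{R}'$ the shifted convolution bound of Theorem \ref{thm:essential-sums}/\ref{thm:shifted-sums-boxes} applies; summing over $l$ with weights controlled by $|\lambda_\phi(l)|$ (Cauchy--Schwarz and the Rankin--Selberg bound \eqref{eq:27}) and dividing by $\mu_f(1)$ via \eqref{eq:26} extracts the factor $1/L(\ad f, 1)$ inside $M_f(k^{\mathbf{1}})$ and completes the estimate.

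The principal obstacle, and the entire point of departure from the $\mathbb{F} = \mathbb{Q}$ argument, is precisely the archimedean cutoff that replaces $\mathcal{R}$ by $\mathcal{R}'$: without it the extra volume $\log(k^{\mathbf{1}})^{d-1}$ would overwhelm any Holowinsky-type bound on the shifted sums, rendering the target estimate $\log(k^{\mathbf{1}})^\eps M_f^{1/2}$ unreachable. Establishing the necessary decay of the archimedean integral over the outskirts of $\mathcal{R}$ reduces to controlled estimates on a hypergeometric function and on certain ratios of products of $\Gamma$-factors; this is the technical heart of the argument and is postponed to \S\ref{sec:bounds-an-integral}.
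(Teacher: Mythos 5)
Your second half is essentially the paper's argument: the archimedean localization via Lemma \ref{lem:j-bound-legit} and Corollary \ref{prop:bound-integral} to shrink $\mathcal{R}$ to $\mathcal{R}'$, the shifted-sum input of Theorem \ref{thm:essential-sums}, the Cauchy--Schwarz/Rankin--Selberg treatment of the $l$-sum, and the role of the nondihedral hypothesis in killing the residues at the quadratic characters are all correctly identified. But your opening move contains a genuine gap: you cannot ``compute $\mu_f(\phi)$ by unfolding in Iwasawa coordinates.'' Writing the integral in coordinates $n(x)a(y)k$ with $x\in\mathbb{F}\backslash\mathbb{A}$, $y\in\mathbb{F}^*\backslash\mathbb{A}^*$ is integration over $Z(\mathbb{A})B(\mathbb{F})\backslash G(\mathbb{A})$, not over $\mathbf{X}=Z(\mathbb{A})G(\mathbb{F})\backslash G(\mathbb{A})$; the fibers of the covering are copies of the infinite set $B(\mathbb{F})\backslash G(\mathbb{F})$, and the resulting integral actually diverges (near $|y|_{\mathbb{A}}\to 0$ the measure is $d^\times y/|y|$ against a function that does not vanish there, because the cuspidal zone of $\mathbf{X}$ is covered infinitely often). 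Unfolding requires one factor of the integrand to be a sum over $B(\mathbb{F})\backslash G(\mathbb{F})$. This is exactly why the paper first multiplies by the auxiliary incomplete Eisenstein series $E(h_Y,\cdot)$ attached to a compactly supported test function $h_Y(y)=h(Yy)$: Lemma \ref{lem:5.1} shows $\mu_f(E(h_Y,\cdot)\phi)=Y\mu_f(\phi)+O_\phi(Y^{1/2}\mu_f(1))$ by Mellin inversion and a contour shift, and it is $\mu_f(E(h_Y,\cdot)\phi)$ --- not $\mu_f(\phi)$ --- that unfolds (Lemma \ref{lem:unfold}), after which one must still break the $y$-integral into $\mathfrak{o}_+^*$-orbits of shifts before the individual $l$-terms make sense on $\mathbb{F}^*\backslash\mathbb{A}^*$.

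The missing parameter $Y$ is not cosmetic. It is what truncates the shift sum to $0\neq|l^{\mathbf{1}}|<Y^{1+\eps}$ (the tail $\mathcal{S}_2$ is controlled by Bessel decay precisely because $y^{\mathbf{1}}\asymp Y^{-1}$ on the support of $h_Y$); it is what makes your Cauchy--Schwarz step produce a finite factor $Y^{1/2+\eps}$; and its optimization $Y=\max(1,M_f(k^{\mathbf{1}})^{-1})$ against the error $O((1+R_f)Y^{-1/2})$ is the source of the exponent $1/2$ on $M_f(k^{\mathbf{1}})$ in the statement of the theorem. Your sketch, having no $Y$, ends by ``extracting $1/L(\ad f,1)$ inside $M_f$'' and could at best yield a bound of the shape $\log^\eps\cdot M_f$ rather than $\log^\eps\cdot M_f^{1/2}$; the square root cannot be recovered without the balancing step. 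To repair the proof you need to insert the regularization of Lemma \ref{lem:5.1} and Proposition \ref{thm:reduction} at the start and carry the parameter $Y$ through to the final optimization.
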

We prove Theorem \ref{thm:holow} in \S\ref{sec:key-arguments-our}
by combining the independent results of \S\ref{sec:reduct-shift-sums},
\S\ref{sec:bounds-an-integral}
and
\S\ref{sec:bounds-shifted-sums}; doing so is our main task in this paper.
Holowinsky
\cite[Cor 3]{holowinsky-2008} established
the case
$\mathbb{F} = \mathbb{Q}$ of Theorem \ref{thm:holow},
in which the ``nondihedral'' hypothesis
is vacuously satisfied.  We briefly recall his argument.
Take $\mathbb{F} = \mathbb{Q}$ and denote by $k$ the
weight of $f$.  Suppose for simplicity that $\phi$ is a Maass
eigencuspform.  Holowinsky defines for a fixed test function $h
\in C_c^\infty(\mathbb{R}_+^*)$ the integral
\[
S_l(Y) = \int_{y \in \mathbb{R}^*_+} h(Y y)
\int_{x \in \mathbb{R}/\mathbb{Z}} (\phi_l |f|^2)(x+iy)
\, \frac{d x \, d y}{y^2},
\]
where $\phi(z) = \sum_l \phi_l(z)$ with
$\phi_l(z+\xi) = e^{2 \pi i l \xi} \phi_l(z)$
for $\xi \in \mathbb{R} $,
and establishes \cite[Theorem 1]{holowinsky-2008}
for any $Y \geq 1$ and $\eps > 0$
the asymptotic formula
\begin{equation}\label{eq:review:holow-eval-sums}
  \frac{\int \phi |f|^2}{\int |f|^2}
  = c Y^{-1} \sum_{0 < |l| < Y^{1+\eps}} S_l(Y)
  + O_{\phi,\eps}(Y^{-1/2})
\end{equation}
where $c$ is an explicit nonzero constant depending only upon the test
function $h$; he shows moreover that
\begin{equation}\label{eq:review:holow-reduc-sums}
  \frac{S_l(Y)}{Y}
  \ll_{\phi,\eps} \frac{|\phi_l(a(Y^{-1}))|}{L(\ad f,1)}
  \left[ \frac{1}{Y k}
    \sum_{
      \substack{
        n  \in \mathbb{N} \\
        m := n+l \in \mathbb{N} 
      }
    }
    | \lambda_f(m) \lambda_f(n) |
    h \left( \frac{Y \left(\frac{k-1}{4 \pi}\right)}{\frac{m+n}{2}}
    \right)
    + \frac{(Y k)^\eps}{k}
  \right].
\end{equation}
He then proves
\cite[Theorem 2]{holowinsky-2008}
(in somewhat greater generality)
that for each $\eps \in (0,1)$,
each $x \gg_\eps 1$,
and each $l \in \mathbb{Z}$ for which
$0 \neq |l| \leq x$, we have
\begin{equation}\label{eq:review:holow-bound-sums}
  \sum_{n \leq x} |\lambda_f(m) \lambda_f(n)|
  \ll \tau(l) \frac{x}{\log(x)^{2-\eps}}
  \prod_{p \leq X} \left( 1 + \frac{2 |\lambda_f(p)|}{p} \right).
\end{equation}
From this he deduces the cuspidal case of Theorem
\ref{thm:holow} for $\mathbb{F} = \mathbb{Q}$.  We generalize
and refine (\ref{eq:review:holow-eval-sums}),
(\ref{eq:review:holow-reduc-sums}) and
(\ref{eq:review:holow-bound-sums}) in
\S\ref{sec:reduct-shift-sums},
\S\ref{sec:bounds-an-integral} and
\S\ref{sec:bounds-shifted-sums}, respectively; among other
refinements, we show that (a generalization to totally real
number fields of) the bound (\ref{eq:review:holow-bound-sums})
holds without the factor $\tau(l)$.  The main complication is
the manner in which these ingredients fit together to yield
Theorem \ref{thm:holow} when $\mathbb{F} \neq \mathbb{Q}$; this
is the crux of our argument, which we present in
\S\ref{sec:key-arguments-our}.  Specifically,
recall from \S\ref{sec:overview-proof} that for a totally real
number field $\mathbb{F}$ of degree $d =
[\mathbb{F}:\mathbb{Q}]$, our na\"{i}ve generalization of
(\ref{eq:review:holow-eval-sums}) and
(\ref{eq:review:holow-reduc-sums}) leaves us with the task of
showing that a sum of roughly $x \log(x)^{d-1}$ terms is small
relative to $x$ (with $x$ a bit larger than $k^{\mathbf{1}}$),
which seems beyond the limits of any method that does not
exploit cancellation in the sum of $\lambda_f(m) \lambda_f(n)$.
By discarding a large number of these terms trivially through a
refinement of (\ref{eq:review:holow-reduc-sums}), we reduce to
the more tractable
problem of showing that a sum of roughly $x \log(x)^{\eps}$ terms is small
relative to $x$.

\subsection{Soundararajan's independent arguments}\label{sec:sound-indep-argum}
Let $\phi$ be a Maass eigencuspform,
and suppose that $\mathbb{F} = \mathbb{Q}$.
Watson's formula \cite[Theorem 3]{watson-2008}
asserts that
\begin{equation}\label{eq:review:22}
  \left \lvert \frac{\mu_f(\phi)}{\mu_f(1)} \right \rvert^2
  = c(\mathbb{F},\phi)
  \frac{\Lambda(\phi \times f \times f,\tfrac{1}{2})}{\Lambda(\ad
    f,1)^2}
\end{equation}
where $c(\mathbb{Q},\phi) = \mu(|\phi|^2) / 8\Lambda(\ad
\phi,1)$
is a nonzero
constant unimportant for our purposes
and $\Lambda(\dotsb,s)$ is the completed $L$-function
for $L(\dotsb,s)$ with
local factors as in \cite[\S 3.1.1]{watson-2008}.
The identity
(\ref{eq:review:22}) with
$c(\mathbb{F},\phi) \neq 0$
holds for totally real $\mathbb{F}$
by Ichino's general triple product formula
\cite{MR2449948} together with Watson's
calculations of the local zeta integrals
of  Harris-Kudla \cite{harris-kudla-1991}
at the real places.
When $\mathbb{F} = \mathbb{Q}$,
Soundararajan \cite[Ex 2]{MR2680497}
proves that
\begin{equation}
  L(\phi \times f \times f,\tfrac{1}{2})
  \ll_{\phi,\eps} \frac{k^{\mathbf{1}} }{ \log (k^{\mathbf{1}})^{1-\eps}}.
\end{equation}
His argument applies verbatim when $\mathbb{F}$ is totally real:
it relies only upon the Ramanujan bound
for the local components of $f$ and the Rankin-Selberg
theory for $\phi \times \phi$,
noting that the analytic conductor of $\phi \times f \times f$
is $\asymp_{\phi} (k^{\mathbf{1}})^4$.
By Stirling's formula as in the $\mathbb{F} = \mathbb{Q}$
case, we obtain
\begin{equation}\label{eq:review:24}
  \frac{\int \phi |f|^2}{\int |f|^2}
  \ll_{\phi,\eps} \frac{\log (k^{\mathbf{1}})^{-1/2+\eps} }{L(\ad f,1)}.
\end{equation}

Now let $\phi = E(\chi,\cdot)$ be the unitary Eisenstein
series associated as in \S\ref{sec:eisenstein-series}
to an unramified idele class character
$\chi \in \mathfrak{X}(C_\mathbb{F}/\hat{\mathfrak{o}
}^*)(\onehalf)$ of real part $\onehalf$,
and suppose that $\mathbb{F} = \mathbb{Q} $.
(Since $C_{\mathbb{Q}} / \hat{\mathbb{Z} }^*
\cong \mathbb{R}_+^*$, we have
$\chi = |.|^{1/2+it}$ for some $t \in \mathbb{R}$.)
Soundararajan \cite[p7]{MR2680497} shows
by the unfolding method, Stirling's formula
and his weak subconvex bounds
for $L(\ad f, \chi)$ \cite[Ex 1]{MR2680497},
the last of which makes use of the known Ramanujan bound
for $f$,
that
\begin{equation}\label{eq:review:26}
  \frac{\mu_f(\phi)}{\mu_f(1)}
  \ll_\eps C(\chi)^2 \frac{\log( k^{\mathbf{1}})^{-1+\eps}}{L(\ad f,1)},
\end{equation}
and \cite[p2]{MR2680497}
\begin{equation}\label{eq:review:27}
  \left\lvert L(\ad f,\chi)
  \right\rvert
  \ll_\eps \frac{(k^{\mathbf{1}})^{1/2} C(\chi)^{3/4}}{\log(k^{\mathbf{1}})^{1-\eps}}.
\end{equation}
By the modularity of $L(\ad f,\chi)$ as the $L$-function
of an automorphic
form on $GL(3)$ \cite{MR533066}, its Rankin-Selberg
theory, and the lower bound
\begin{equation}
  L(\ad f,1) \gg \log(
  k^{\mathbf{1}})^{-1}
\end{equation}
due to
Hoffstein-Lockhart-Goldfeld-Hoffstein-Lieman \cite{HL94}
(which is available for general $\mathbb{F}$,
see \cite[\S 2.9]{2009arXiv0904.2429B}), Soundararajan
deduces \cite[Lem 1]{holowinsky-soundararajan-2008} in his
joint paper with Holowinsky that
\begin{equation}\label{eq:review:bound-for-R_k}
  R_f(k^{\mathbf{1}}) \ll_\eps
  \frac{\log (k^{\mathbf{1} })^\eps}{\log( k^{\mathbf{1}}) L(\ad f,1)}
  \ll \log( k^{\mathbf{1}})^{\eps}.
\end{equation}
The same argument establishes \eqref{eq:review:26}, \eqref{eq:review:27},
\eqref{eq:review:bound-for-R_k}
for general totally real number fields $\mathbb{F}$.

\subsection{The Holowinsky-Soundararajan synthesis}\label{sec:holow-sound-synth}
In their joint work \cite{holowinsky-soundararajan-2008},
Holowinsky and Soundararajan show \cite[Lem 3]{holowinsky-2008}
for $\mathbb{F} = \mathbb{Q}$ that
\begin{equation}\label{eq:review:Mk-bound}
  M_k(f) \ll \log (k^{\mathbf{1}})^{1/6} \log \log (k^{\mathbf{1}})^{9/2}
  L(\ad f,1)^{1/2},
\end{equation}
and their proof applies for general
$\mathbb{F}$.
Subsituting the bound \eqref{eq:review:Mk-bound} into
Theorem \ref{thm:holow}
and combining with
Soundararajan's estimate
\eqref{eq:review:24} yields
for each Maass eigencuspform $\phi$  that
\begin{equation}\label{eq:review:25}
  \frac{\mu_f(\phi)}{\mu_f(1)}
  \ll_{\phi,\eps} \min \left( \frac{\log(k^{\mathbf{1}})^{-1/2+\eps}}{ L(\ad f,1)},
    \log(k^{\mathbf{1}})^{1/12+\eps} L(\ad f,1)^{1/4} \right).
\end{equation}
It follows as in \cite[Proof of Thm
1]{holowinsky-soundararajan-2008} that $\mu_f(\phi) / \mu_f(1)
\ll_{\phi,\eps} \log(k^{\mathbf{1}})^{-1/30+\eps} = o(1)$, and the same
argument applies in the totally real case as soon as one has
established Theorem \ref{thm:holow}.

Holowinsky and Soundararajan show
\cite[p10]{holowinsky-soundararajan-2008}
that Soundararajan's bound
\eqref{eq:review:26} for unitary Eisenstein series
also applies to incomplete Eisenstein series
via the Mellin inversion formula.
Specifically, they show for $\mathbb{F} = \mathbb{Q}$
and $\phi = E(\Psi,\cdot)$
that
\begin{equation}\label{eq:review:28}
  \left\lvert
    \frac{\mu_f(\phi)}{\mu_f(1)}
    - \frac{\mu(\phi)}{\mu(1)}
  \right\rvert
  \ll_{\phi,\eps} \frac{\log(k^{\mathbf{1}})^{-1+\eps}}{L(\ad f,1)}.
\end{equation}
Their argument generalizes to the totally real case by replacing
the Mellin inversion on $\mathbb{R}_+^* \cong
C_{\mathbb{Q}}/\hat{\mathbb{Z} }^*$ with that on $C_\mathbb{F} /
\hat{\mathfrak{o} }^*$,
as we now describe.
Let $\Psi \in C_c^\infty(C_{\mathbb{F}}/\hat{\mathfrak{o} }^*)$
and $\phi = E(\Psi,\cdot)$.
By the Mellin formula (see \S\ref{sec:characters})
\[
\phi =
\int_{\mathfrak{X}(C_{\mathbb{F}}/\hat{\mathfrak{o} }^*)(2)}
\Psi^\wedge(\chi) E(\chi,\cdot) \, \frac{d \chi }{2 \pi i}
\]
and the meromorphic nature of $E(\chi,\cdot)$ (see \S\ref{sec:eisenstein-series}
or \cite{MR546600}),
we have
\begin{equation}\label{eq:review:expand-inc-eis}
  \begin{split}
    \mu_f(\phi)
    &=  \sum_{
      \chi_0 \in \mathfrak{X}(C_\mathbb{F}/\hat{\mathfrak{o} }^*)[2]
    }
    \Psi^\wedge(\chi_0)
    \res_{s=1} \mu_f(E(\chi_0 |.|^s,\cdot))
    \\
    & \quad + \int_{
      \mathfrak{X}(C_{\mathbb{F}}/\hat{\mathfrak{o} }^*)(1/2)
    }
    \Psi^\wedge(\chi)
    \mu_f(E(\chi,\cdot))
    \,
    \frac{d \chi }{2 \pi i },
  \end{split}
\end{equation}
where the interchanges here and those that follow are justified
by absolute convergence owing to the rapid decay of $f$ and
$\Psi$ and the moderate growth of $E(\chi,\cdot)$.  By the
unfolding method as in \S\ref{sec:masses}, the residue
$\res_{s=1} \mu_f(E(\chi_0 |.|^s,\cdot))$ coincides with
$\res_{s=1} \Lambda(\ad f, \chi_0 |.|^s)
\xi_{\mathbb{F}}(\chi_0|.|^s)$ up to a nonzero scalar.  Suppose
now that $f$ is nondihedral in the sense of 
\S\ref{sec:holom-eigenc},
so that $s \mapsto \Lambda(\ad f, \chi_0 |.|^s)$
is entire.  Then since $\xi_{\mathbb{F}}$ is
holomorphic away from its pole at $\chi = |.|$, we see that
$\res_{s=1} \mu_f(E(\chi_0 |.|^s,\cdot)) = 0$ if $\chi_0 \neq
1$.  If $\chi_0 = 1$, then
\[
\Psi^\wedge(|.|) \res_{s=1} \mu_f(E(|.|^s,\cdot))
= \mu_f(1) \Psi^\wedge(|.|) \res_{s=1} E(|.|^s,\cdot).
\]
We have $\Psi^\wedge(|.|) \res_{s=1} E(|.|^s,\cdot)
= \mu(\phi) / \mu(1)$ because both sides are equal
to the coefficient of the constant function $1$
in the spectral decomposition of $\phi \in L^2(\mathbf{X},\mu)$ \cite[\S 4]{MR546600}.
Thus for $f$ nondihedral, we obtain
\begin{equation}\label{eq:1a}
  \frac{\mu_f(\phi)}{\mu_f(1)}
  - \frac{\mu(\phi)}{\mu(1)}
  =
  \int_{
    \mathfrak{X}(C_{\mathbb{F}}/\hat{\mathfrak{o} }^*)(1/2)
  }
  \Psi^\wedge(\chi)
  \frac{\mu_f(E(\chi,\cdot))}{\mu_f(1)}
  \,
  \frac{d \chi }{2 \pi i }.
\end{equation}
Soundararajan's bound (\ref{eq:review:26})
for unitary Eisenstein series shows that the right-hand side
of (\ref{eq:1a}) is
\[
\ll_{\eps}
\int_{
  \mathfrak{X}(C_{\mathbb{F}}/\hat{\mathfrak{o} }^*)(1/2)
}
\left\lvert \Psi^\wedge(\chi)
  \frac{C(\chi)^2 \log(k^{\mathbf{1}})^{-1+\eps}}{L(\ad f, 1)}
\right\rvert
\, |d \chi|
\ll_{\phi}
\frac{\log(k^{\mathbf{1}})^{-1+\eps}}{L(\ad f, 1)},
\]
where in the final step we invoked the rapid decay of
$\Psi^\wedge$ (see \S\ref{sec:characters}).
Thus we obtain the estimate (\ref{eq:review:28})
for nondihedral forms over a totally real field.

By combining Holowinsky's Theorem \ref{thm:holow}
with Soundararajan's \eqref{eq:review:bound-for-R_k}
and \eqref{eq:review:28}, Holowinsky and Soundararajan obtain,
for $\mathbb{F} = \mathbb{Q}$ and $\phi = E(\Psi,\cdot)$,
the bound
\begin{equation}
  \left\lvert
    \frac{\mu_f(\phi)}{\mu_f(1)}
    - \frac{\mu(\phi)}{\mu(1)}
  \right\rvert
  \ll_{\phi,\eps}
  \min
  \left(
    \frac{\log (k^{\mathbf{1}})^{-1+\eps}}{L(\ad f,1)},
    \log (k^{\mathbf{1}})^{1/12+\eps} L(\ad f,1)^{1/4}
  \right),
\end{equation}
which is $o(1)$ (or even $\ll
\log(k^{\mathbf{1}})^{-2/15+\eps}$) by examination (see
\cite[Proof of Thm 1]{holowinsky-soundararajan-2008}).  The same
estimate follows in the totally real case as soon as one has
established Theorem \ref{thm:holow}.

\section{The key arguments in our generalization}\label{sec:key-arguments-our}
We saw in \S\ref{sec:brief-revi-holow}
that our main result Theorem \ref{thm:1}
follows from the generalization of Holowinsky's
work asserted by Theorem \ref{thm:holow}.
We now describe the key arguments
that reduce our proof of Theorem \ref{thm:holow}
to several technical results that we shall prove
in the remaining sections of this paper;
those results are independent of one another and do not depend
upon any work in this section, so there is no circularity
in our discussion.

Recall that Theorem \ref{thm:holow}
claims to bound $\mu_f(\phi)/\mu_f(1) - \mu(\phi)/\mu(1)$,
for $f$ a nondihedral holomorphic eigencuspform
of weight $k$
and $\phi$ either a Maass eigencuspform
or an incomplete Eisenstein series,
in terms of certain quantities $M_f(k^{\mathbf{1}})$
and $R_f(k^{\mathbf{1}})$ \eqref{eq:review:mf-defn}--\eqref{eq:review:rf-defn}.
\begin{definition}\label{defn-shifted-sums}
  Fix a nonnegative test function $h \in
  C_c^\infty(\mathbb{R}_+^*)$ with Mellin transform
  \[h^\wedge(s)
  = \int_0^\infty h(y) y^{-s} \, d^\times y
  \]
  normalized so that
  $h^\wedge(1) \res_{s=1} E(s,\cdot) = 1$.
  Recall from \S\ref{sec:number-fields} that we have fixed
  representatives $\mathfrak{z}_j = \div z_j$ for the narrow
  class group of $\mathbb{F}$; here $j \in \{1, \dotsc,
  h(\mathbb{F})\}$ and $z_j \in \mathbb{A}_f^*$.
  For each unramified idele
  class character $\chi \in
  \mathfrak{X}(C_{\mathbb{F}}/\hat{\mathfrak{o} } ^\ast )$
  and each $x \geq 2$,
  define the \emph{shifted sums}
  \begin{equation}
    S_\chi(x) =
    \sum_{j=1}^{h(\mathbb{F})}
    \sum
    _{
      \substack{
        l \in \mathfrak{o} _+ ^\ast \backslash \mathfrak{z} _j \\
        0 \neq |l ^{\mathbf{1}}| < x ^{1 + \eps }
      }
    }
    \frac{
      \lambda_{\chi}(\mathfrak{z}_j^{-1} l)
    }
    {
      \norm(\mathfrak{z}_j^{-1} l)^{1/2}
    }
    S_{\chi_\infty}(\mathfrak{z}_j,l,x),
  \end{equation}
  where
  \begin{equation}\label{eq:11}
    S_{\chi_\infty}(\mathfrak{z},l,x) =
    \sum _{
      \substack{
        n \in \mathfrak{z}
        \cap \mathbb{F}_{\infty+}^* \\
        m := n + l \in \mathfrak{z}
        \cap \mathbb{F}_{\infty+}^* \\
      }}
    \frac{
      \lambda_{f}(\mathfrak{z}^{-1} m)
    }
    {
      \norm(\mathfrak{z}^{-1} m)^{1/2}
    }
    \frac{
      \lambda_{f}(\mathfrak{z}^{-1} n)
    }
    {
      \norm(\mathfrak{z}^{-1} n)^{1/2}
    }
    \frac{I_{\chi_\infty}(l,n,\norm(\mathfrak{z}) x)}{\norm(\mathfrak{z})},
  \end{equation}
  and (here $m := n + l$ as always)
  \begin{equation}\label{eq:10}
    I_{\chi_\infty}(l,n,x)
    =
    \frac{(4 \pi \mathbf{1})^{k-\mathbf{1}}}{\vecgamma(k-\mathbf{1})}
    \int_{\mathbb{F}_{\infty+}^*}
    h(x y^1) \kappa_{\chi,\infty}(l y)
    \kappa_{f,\infty}(m y)
    \kappa_{f,\infty}(n y) \,
    \frac{d^\times y}{y^{\mathbf{1}}}.
  \end{equation}
  If $\phi$ is a Maass eigencuspform of eigenvalue
  $(\tfrac{1}{4}+r_1^2,\dotsc,\tfrac{1}{4}+r_{[\mathbb{F}:\mathbb{Q}]}^2)$ and
  parity $(\eps_1,\dotsc,\eps_{[\mathbb{F}:\mathbb{Q}]})$,
  define analogously $S_\phi(x)$,
  $S_{\phi_\infty}(\mathfrak{z},l,x)$ and
  $I_{\phi_\infty}(l,n,x)$ by replacing $\kappa_{\chi,\infty}$
  and $\lambda_\chi$ with
  $\kappa_{\phi,\infty}$ and $\lambda _\phi $ above; note then that
  $S_{\phi_\infty}(\mathfrak{z},l,x)$ is the special case
  of $S_{\chi_\infty}(\mathfrak{z},l,x)$ obtained by
  taking $\chi_\infty$ to be the
  (conceivably non-unitary)
  character $[y \mapsto \prod \sgn(y_j)^{\eps_j} |y_j|^{i r_j}]
  \in \mathfrak{X}(\mathbb{F}_{\infty}^*)$ as in \eqref{eq:33}.
\end{definition}

\begin{proposition}\label{thm:reduction}
  Let $f$ be as in the statement of Theorem \ref{thm:holow}
  and let $Y \geq 1$.
  If $\phi$ is a Maass eigencuspform,
  then
  \[
  \frac{\mu_f(\phi)}{\mu_f(1)}
  =
  \frac{c_1(\mathbb{F})}{L(\ad f,1)}
  \frac{  S_\phi(Y)}{(k-\mathbf{1})^{\mathbf{1}} Y}
  + O_{\phi,\eps} (Y ^{- 1/2}).
  \]
  If $\phi = E(\Psi,\cdot)$ is an incomplete
  Eisenstein series (recall that $f$ is not dihedral),
  then
  \begin{equation*}
    \begin{split}
      \frac{\mu_f(\phi)}{\mu_f(1)}
      - \frac{\mu(\phi)}{\mu(1)}
      &=
      \frac{c_1(\mathbb{F})}{L(\ad f,1)}
      \int _{\mathfrak{X}(C_{\mathbb{F}}/\hat{\mathfrak{o} } ^\ast )(0)}
      \frac{
        \Psi^\wedge(|.|^{1/2}\chi)
      }{
        \xi _{\mathbb{F} } (|.|\chi^2 )    \chi(d_{\mathbb{F}})^{-2}
      }
      \frac{  S_\chi(Y) }{(k-\mathbf{1})^{\mathbf{1}} Y }
      \, \frac{d \chi }{2 \pi i} \\
      &\quad 
      + O_{\phi,\eps} \left( \frac{1 + R_f(k^{\mathbf{1}})}{Y ^{
            1/2}} \right).
    \end{split}
  \end{equation*}
  The constant $c_1(\mathbb{F})$ is as in the formula
  \eqref{eq:26}.
\end{proposition}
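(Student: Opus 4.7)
The plan is to evaluate $\mu_f(\phi)$ via a Rankin--Selberg-type unfolding against an auxiliary incomplete Eisenstein series built from the test function $h$ of Definition~\ref{defn-shifted-sums}, and then use the Fourier expansions of $\phi$ and $f$ to isolate the shifted-sum contribution. Let $\Psi_Y \in C_c^\infty(C_{\mathbb{F}}/\hat{\mathfrak{o}}^*)$ denote the dilation $\Psi_Y(y) := h(Y |y|_{\mathbb{A}})$, which factors through the adelic absolute value and hence has Mellin transform $\Psi_Y^\wedge(|.|^s) = Y^s h^\wedge(s)$ vanishing on non-trivial characters of $C_{\mathbb{F}}^1/\hat{\mathfrak{o}}^*$, and let $E_Y := E(\Psi_Y,\cdot)$ be the associated incomplete Eisenstein series. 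Shifting the contour in $E_Y = \int_{(c)} Y^s h^\wedge(s) E(|.|^s,\cdot)\, ds/(2\pi i)$ from $c > 1$ past the simple pole of $E(|.|^s,\cdot)$ at $s=1$ and invoking the normalization $h^\wedge(1)\,\res_{s=1} E(|.|^s,\cdot) = 1$ gives $E_Y = Y + \int_{(1/2)} Y^s h^\wedge(s) E(|.|^s,\cdot)\, ds/(2\pi i)$. Pairing with $\phi |f|^2$ and dividing by $Y$ yields
\[
\mu_f(\phi) = Y^{-1} \mu_f(\phi E_Y) - Y^{-1/2} \int_{\mathbb{R}} Y^{it} h^\wedge(\tfrac{1}{2}+it)\, \mu_f\bigl(\phi\, E(|.|^{1/2+it},\cdot)\bigr)\, \frac{i\, dt}{2\pi}.
\]
The remainder integral yields the $O_{\phi,\eps}(Y^{-1/2})$ or $O_{\phi,\eps}(Y^{-1/2}(1+R_f(k^{\mathbf{1}})))$ error claimed in the statement: in the Maass cuspidal case by Cauchy--Schwarz combined with Rankin--Selberg bounds for $|f|^2$ and the rapid decay of $h^\wedge$; in the incomplete Eisenstein case by Soundararajan's weak subconvexity bound \eqref{eq:review:rf-defn}, with the nondihedral hypothesis ensuring that the contour can be shifted to $\Re(s) = 1/2$ without additional polar contributions from quadratic twists of $L(\ad f)$.

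Next, unfold $\mu_f(\phi E_Y) = \int_{Z(\mathbb{A}) B(\mathbb{F}) \backslash G(\mathbb{A})} \phi |f|^2\, \Psi_Y(y(g))\, d\mu$ and substitute the factorizable Fourier expansions \eqref{eq:17} of $\phi$ and $f$. The $x$-integration over $\mathbb{F}\backslash\mathbb{A}$ isolates the diagonal $l + m_1 - m_2 = 0$. The resulting \emph{constant-term contribution} $\int \phi_0(y) \sum_{m} |\kappa_f(my)|^2\, \Psi_Y(y)\, d^\times y/|y|_{\mathbb{A}}$ vanishes for Maass cuspforms; for incomplete Eisenstein series $\phi = E(\Psi,\cdot)$, it is evaluated by substituting the explicit form of $\phi_0$ from \eqref{eq:phi0-useful-formula-0} together with the Rankin--Selberg identity \eqref{eq:26}, then shifting a second Mellin contour to $\Re(\chi) = 1/2$. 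By the nondihedral hypothesis, among the potential polar contributions at $\chi = \chi_0 |.|$ with $\chi_0 \in \mathfrak{X}(C_{\mathbb{F}}/\hat{\mathfrak{o}}^*)[2]$ only $\chi_0 = 1$ survives, producing the cancelling main term $(\mu(\phi)/\mu(1)) Y \mu_f(1)$ that yields the subtracted $\mu(\phi)/\mu(1)$ on the left-hand side; the remaining contour integrals are absorbed into the $R_f$-weighted error. The \emph{non-constant contribution} $\sum_{l \neq 0, m} \int_{\mathbb{F}^*\backslash \mathbb{A}^*} \kappa_\phi(ly) \kappa_f(my) \overline{\kappa_f((m+l)y)}\, \Psi_Y(y)\, d^\times y / |y|_{\mathbb{A}}$ is the shifted-sum piece.

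To match the definition of $S_\phi(Y)$ (and its Mellin-expanded counterpart $S_\chi(Y)$ for incomplete Eisenstein series via \eqref{eq:phi0-useful-formula-1}), unfold the $l$-sum by the change of variables $y \to y/l$---permissible since $\Psi_Y$ and $d^\times y/|y|_{\mathbb{A}}$ are $\mathbb{F}^*$-invariant---to collapse $\sum_{l \in \mathbb{F}^*} \int_{\mathbb{F}^* \backslash \mathbb{A}^*}$ into $\int_{\mathbb{A}^*}$, then decompose $\mathbb{A}^*$ by the narrow-class identification \eqref{eq:100}. The factorizable Fourier structure converts the finite-adelic integration into the arithmetic sum $\lambda_\phi(\mathfrak{z}_j^{-1} l) \lambda_f(\mathfrak{z}_j^{-1} n) \lambda_f(\mathfrak{z}_j^{-1}(n+l))/\norm(\cdots)^{1/2}$, while the archimedean integration over $\mathbb{F}_{\infty+}^*/\mathfrak{o}_+^*$ against $h(Y y_\infty^{\mathbf{1}} \norm(\mathfrak{z}_j))$ and the archimedean kernels $\kappa_{\phi,\infty}, \kappa_{f,\infty}$ assembles (once the prefactor $(4\pi\mathbf{1})^{k-\mathbf{1}}/\vecgamma(k-\mathbf{1})$ is inserted) into $I_{\phi_\infty}(l, n, Y\norm(\mathfrak{z}_j))/\norm(\mathfrak{z}_j)$. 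Normalizing by $\mu_f(1)$ via \eqref{eq:26}, the Gamma factors cancel through $\vecgamma(k) = (k-\mathbf{1})^{\mathbf{1}} \vecgamma(k-\mathbf{1})$ to produce the stated coefficient $c_1(\mathbb{F})/((k-\mathbf{1})^{\mathbf{1}} Y L(\ad f,1))$. Tail terms with $|l^{\mathbf{1}}| \geq Y^{1+\eps}$ are absorbed into the error by exponential decay of the archimedean kernels, as quantified by Corollary~\ref{prop:bound-integral}. The main obstacle is the careful bookkeeping required to verify that the interplay between the narrow-class decomposition of $C_{\mathbb{F}}/\hat{\mathfrak{o}}^*$, the $\mathfrak{o}_+^*$-unit action that quotients the $l$-sum, the Mellin contour shifts that extract constant-term residues, and the tail truncation all combine consistently to reproduce the structure of Definition~\ref{defn-shifted-sums}; each piece parallels Holowinsky's $\mathbb{F} = \mathbb{Q}$ argument but requires adelic reformulation.
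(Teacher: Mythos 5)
Your high-level strategy is the same as the paper's (Lemmas \ref{lem:5.1}--\ref{lem:bound-s2}): integrate $\phi|f|^2$ against the incomplete Eisenstein series $E(h_Y,\cdot)$, peel off the main pole via the normalization of $h^\wedge$, substitute Fourier expansions, and separate the constant term, the tail $|l^{\mathbf{1}}|\geq Y^{1+\eps}$, and the middle range. The step that is genuinely wrong is the proposed unfolding of the $l$-sum by the change of variables $y\mapsto y/l$. The integrand is $\kappa_\phi(ly)\kappa_f(my)\overline{\kappa_f(ny)}$ with the \emph{additive} constraint $m=n+l$, so rescaling $y$ by $1/l$ produces $\kappa_\phi(y)\kappa_f((m/l)y)\overline{\kappa_f((n/l)y)}$ with $m/l,\,n/l$ generally not in $\mathfrak{z}_j$; the $l$-sum therefore does not collapse against $\int_{\mathbb{F}^*\backslash\mathbb{A}^*}$ the way a geometric-series unfolding would. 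A symptom of the flaw: after your narrow-class decomposition the archimedean integral lands on a compact fundamental domain $[\mathbb{F}_{\infty+}^*/\mathfrak{o}_+^*]$, yet you assert it ``assembles into'' $I_{\phi_\infty}$, which by Definition~\ref{defn-shifted-sums} is an integral over all of $\mathbb{F}_{\infty+}^*$; the mismatch is exactly the unit-group volume your substitution was supposed to account for.

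The correct move (Lemma~\ref{lem:unfold} of the paper, eqs.~\eqref{eq:36}--\eqref{eq:37}) is to keep the $y$-integral over $[\mathbb{F}_{\infty+}^*/\mathfrak{o}_+^*]$, observe that $(\phi_{\eta l}\overline{f_{\eta m}}f_{\eta n})(a(y\times z_j^{-1}))=(\phi_l\overline{f_m}f_n)(a(\eta y\times z_j^{-1}))$ for $\eta\in\mathfrak{o}_+^*$ --- this works precisely because the \emph{diagonal} action $(l,n)\mapsto(\eta l,\eta n)$ preserves $m=n+l$ --- then break the $(l,n)$-sum into $\mathfrak{o}_+^*$-orbits and unfold the $y$-integral to all of $\mathbb{F}_{\infty+}^*$. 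That is what produces the quotient $\mathfrak{o}_+^*\backslash\mathfrak{z}_j$ in the $l$-sum of $S_\phi(Y)$ and makes $I_{\phi_\infty}$ an integral over $\mathbb{F}_{\infty+}^*$. You should also note that the $R_f$-dependent error arises from the contour shift inside the constant-term term $\mathcal{S}_0$ (where the poles of $L(\ad f,\chi_0|.|^s)$ would enter if $f$ were dihedral), not from the first $E(|.|^s,\cdot)$-contour shift as your text suggests, and the tail $\mathcal{S}_2$ is disposed of directly by Bessel decay as in Lemma~\ref{lem:bound-s2} rather than via Corollary~\ref{prop:bound-integral}, which applies only after unfolding.
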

\begin{proof}
  See \S\ref{sec:reduct-shift-sums}.
  The proof is a straightforward
  and na\"{i}ve generalization of Holowinsky's
  arguments in the $\mathbb{F}=\mathbb{Q}$ case.
\end{proof}
Proposition \ref{thm:reduction} shows that Theorem \ref{thm:holow} follows from
sufficiently strong bounds for the shifted sums $S_\phi(Y)$
for $\phi$ a Maass eigencuspform and $S_\chi(Y)$ for $\chi
\in \mathfrak{X}(C_{\mathbb{F}}/\hat{\mathfrak{o} }^*)(0)$ an
unramified unitary idele class character.

We bound the sums $S_\phi(Y)$ and $S_\chi(Y)$
by bounding their summands
$S_{\chi_\infty}(\mathfrak{z},l,x)$
for each narrow ideal class representative $\mathfrak{z} =
\mathfrak{z}_j$ ($j \in \{1, \dotsc, \mathbb{F} \}$),
each nonzero shift
$l \in \mathfrak{z} \cap \mathbb{F}^*$,
and each character $\chi_{\infty} \in \mathfrak{X}(\mathbb{F}_{\infty}^*)$;
recall from Definition \ref{defn-shifted-sums}
that
\begin{equation}\label{eq:34}
  S_{\phi_\infty}(\mathfrak{z},l,x)
  = S_{\chi_\infty}(\mathfrak{z},l,x)
\end{equation} for a suitable
character $\chi_{\infty} \in
\mathfrak{X}(\mathbb{F}_{\infty}^*)$.
For this reason it suffices
to bound $S_{\chi_\infty}(l,n,x)$
when
$\chi_{\infty}$ is either unitary
or
of the form \eqref{eq:33} for some Maass eigencuspform $\phi$,
so that in particular each $r_j \in \mathbb{R} \cup
i(-\onehalf,\onehalf)$;
we assume henceforth that this is the case.

The sums $S_{\chi_\infty}(\mathfrak{z},l,x)$
are weighted by an integral $I_{\chi_\infty}(l,n,x)$,
which we treat as follows.
By the Mellin formula $h(y) =
\int_{(c)} h^\wedge(s) y^s \, \tfrac{d s}{2 \pi i}$ with
$h^\wedge(s) = \int_0^\infty  h(y) y^{-s} \, d^\times y$
and $c \geq 0$, we may factor
$I_{\chi_\infty}(l,n,x)$ as a product of local integrals
\begin{equation}\label{eq:32}
  I_{\chi_\infty}(l,n,x)
  = \int_{(c)} h^\wedge(s) x^s
  \left(
    \prod_{j=1}^{[\mathbb{F}:\mathbb{Q}]}
    J_{i r_j}(l_j,n_j,s)
  \right)
  \frac{d s}{2 \pi i},
\end{equation}
where
\[
J_{i r_j}(l_j,n_j,s)
:=
\frac{(4 \pi)^{k_j-1}}{\Gamma(k_j-1)}
\int_{\mathbb{R}_+^*}
y^{s-1} \kappa_{\chi,\infty_j}(l_j y)
\kappa_{f,\infty_j}(m_j y)
\kappa_{f,\infty_j}(n_j y)
\, d^\times y.
\]
The ``trivial'' bound for $J_{i r_j}$
obtained by applying the inequality
$|\kappa_{\chi,\infty_j}(l_j y)| \leq 1$
to the integrand
and evaluating the resulting gamma integral
is
\begin{equation}\label{eq:j-bound-trivial}
  |J_{i r_j}(l_j,n_j,s)|
  \leq
  \frac{\Gamma(k_j - 1+\sigma)}{\Gamma(k_j-1)}
  \frac{
    \sqrt{m_j n_j}
  }
  {
    \left(4 \pi \left( \frac{m_j + n_j }{2} \right) \right)^\sigma
  }
  \left( \frac{\sqrt{m_j n_j}}{ \left( \frac{m_j + n_j}{2}
      \right)} \right)^{k_j-1},
\end{equation}
where $s = \sigma + i t$.
However, \eqref{eq:j-bound-trivial} would \emph{not suffice}
for our purposes, as we shall explain after
proving the following refinement.
\begin{lemma}\label{lem:j-bound-legit}
  For $i r_j \in i \mathbb{R} \cup
  (-\tfrac{1}{2},\tfrac{1}{2})$,
  $l_j \neq 0$, $n_j > 0$,
  $m_j = n_j + l_j > 0$,
  $k_j \geq 2$,
  and $s = \sigma + i t$
  with $\sigma \geq -\tfrac{1}{2}$, we have
  \begin{equation}\label{eq:j-bound-legit}
    |J_{i r_j}(l_j,n_j,s)|
    \leq
    \frac{\Gamma(k_j - 1 + \sigma)}{\Gamma(k_j-1)}
    \frac{
      \sqrt{m_j n_j}
    }
    {
      \left(4 \pi \max(m_j,n_j) \right)^\sigma
    }
    \left(
      \frac{\min(m_j,n_j)}{\max(m_j,n_j)}
    \right)^{\frac{k_j-1}{2}}.
  \end{equation}
\end{lemma}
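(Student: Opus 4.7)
The plan is to evaluate $J_{ir_j}(l_j,n_j,s)$ in closed form as a ratio of Gamma functions times a Gauss hypergeometric ${}_2F_1$, then notice that the factor $\sqrt{m_j n_j}\,(\min/\max)^{(k_j-1)/2}$ drops out of the algebraic prefactor once one uses the trivial estimate $|l_j|\le \max(m_j,n_j)$, leaving only a ratio of Gamma functions times a hypergeometric factor to be controlled.

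First I would substitute the explicit formulas $\kappa_{f,\infty_j}(my)=(my)^{k_j/2}e^{-2\pi m y}\mathbf{1}_{y>0}$ and $\kappa_{\chi,\infty_j}(l_j y)=2|l_j y|^{1/2}K_{ir_j}(2\pi|l_j|y)\sgn(l_j y)^{\eps_j}$ into the definition of $J_{ir_j}$, reducing it to
\[
J_{ir_j}(l_j,n_j,s)=2\sgn(l_j)^{\eps_j}\,\frac{(4\pi)^{k_j-1}(m_jn_j)^{k_j/2}|l_j|^{1/2}}{\Gamma(k_j-1)}\int_0^\infty y^{s+k_j-3/2}K_{ir_j}(2\pi|l_j|y)e^{-2\pi(m_j+n_j)y}\,dy.
\]
I would then apply the classical Mellin transform formula for $e^{-\alpha y}K_\nu(\beta y)$ (Gradshteyn--Ryzhik 6.621.3) with $\alpha=2\pi(m_j+n_j)$, $\beta=2\pi|l_j|$ and $\nu=ir_j$. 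The identities $\alpha+\beta=4\pi\max(m_j,n_j)$ and $\alpha-\beta=4\pi\min(m_j,n_j)$, both immediate from $l_j=m_j-n_j$, cause $4\pi\max(m_j,n_j)$ to emerge as the natural scale and $z:=\min(m_j,n_j)/\max(m_j,n_j)\in[0,1)$ as the natural hypergeometric argument.

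Second I would absorb the algebraic prefactors. A short calculation, using only $|l_j|=|m_j-n_j|\le\max(m_j,n_j)$, yields the elementary inequality
\[
\frac{(m_jn_j)^{k_j/2}|l_j|^{1/2}}{\max(m_j,n_j)^{k_j-1}}\le\sqrt{m_jn_j}\left(\frac{\min(m_j,n_j)}{\max(m_j,n_j)}\right)^{(k_j-1)/2},
\]
which already accounts for both the $\sqrt{m_jn_j}$ and the $(\min/\max)^{(k_j-1)/2}$ factors of the claimed bound. Combining with the closed-form expression from the previous step reduces \eqref{eq:j-bound-legit} to showing that
\[
\frac{|\Gamma(s+k_j-\tfrac12+ir_j)\,\Gamma(s+k_j-\tfrac12-ir_j)|}{|\Gamma(s+k_j)|}\,\bigl|{}_2F_1(s+k_j-\tfrac12+ir_j,\,ir_j+\tfrac12;\,s+k_j;\,z)\bigr|\le C\,\Gamma(k_j-1+\sigma)
\]
for an absolute constant $C$ (which, after tracking the numerical factors from the closed form, must be essentially $1$).

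Finally, and this is where the real work lies, I would bound the last display. The plan is to apply an Euler transformation of the ${}_2F_1$ to shift the parameters into a regime whose real parts are nonnegative, then use the Euler integral representation to write the ${}_2F_1$ as an explicit beta-type integral, in which the factor $(1-z)^{1/2}$ coming from $|l_j|^{1/2}/\max(m_j,n_j)^{1/2}$ combines favorably with the $(1-zt)^{-\Re(a)}$ inside the integral. The ratio of Gamma functions is then handled via $|\Gamma(x+iy)|\le\Gamma(x)$ for $x>0$ and the log-convexity of $\log\Gamma$ on $(0,\infty)$, together with the duplication formula to rewrite $\Gamma(s+k_j-\tfrac12+ir_j)\Gamma(s+k_j-\tfrac12-ir_j)/\Gamma(s+k_j)$ in a form directly comparable to $\Gamma(k_j-1+\sigma)$. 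The authors describe this as ``amusing inequalities satisfied by the hypergeometric function and ratios of pairs of Gamma functions'' and defer it to \S\ref{sec:bounds-an-integral}; the main obstacle is arranging the Euler transformation so that the exponent $\Re(a)$ in the integral representation is at most $1/2$ (so that the compensation against $(1-z)^{1/2}$ is exact) while keeping the ratio of Gamma prefactors recognizable.
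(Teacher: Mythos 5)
Your overall route---evaluate $J_{i r_j}$ in closed form via Gradshteyn--Ryzhik \cite[6.621.3]{GR} and then control a ratio of Gamma functions times a hypergeometric factor---is the same as the paper's, but there are two genuine gaps. First, the ``elementary inequality'' in your second step is false as written: since $m_j n_j = \max(m_j,n_j)\min(m_j,n_j)$, one has the \emph{identity}
\[
\frac{(m_j n_j)^{k_j/2}\,|l_j|^{1/2}}{\max(m_j,n_j)^{k_j-1}}
= \sqrt{m_j n_j}\left(\frac{\min(m_j,n_j)}{\max(m_j,n_j)}\right)^{\frac{k_j-1}{2}}\,|l_j|^{1/2},
\]
so your inequality amounts to $|l_j|\le 1$, which fails in general. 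With the correct bookkeeping (the power of $\max(m_j,n_j)$ produced by $(\alpha+\beta)^{-\mu-\nu}$ is $k_j-\tfrac12$, not $k_j-1$) the leftover is the \emph{harmless-looking} factor $(|l_j|/\max(m_j,n_j))^{1/2}$, but discarding it via $|l_j|\le\max(m_j,n_j)$ is a strategic error, not a cosmetic one: the untransformed ${}_2F_1(s+k_j-\tfrac12+ir_j,\,ir_j+\tfrac12;\,s+k_j;\,z)$ appearing in your reduction has $c-a-b=-2ir_j$ and is \emph{not} bounded uniformly as $z=\min(m_j,n_j)/\max(m_j,n_j)\to1^-$ (it grows like $(1-z)^{-2\Re(i r_j)}$ when $\Re(i r_j)>0$, and logarithmically when $r_j=0$), which is exactly the regime $|l_j|\ll\max(m_j,n_j)$ that matters. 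Your step~3 then tries to ``combine'' this factor with the hypergeometric integral after step~2 has already thrown it away. The paper avoids any inequality here: the transformation \cite[9.131]{GR} produces a factor $(1-z)^{-ir_j-\tfrac12}$ that cancels the residual term exactly, giving the identity \eqref{eq:huge-equation} with the hypergeometric evaluated at the \emph{negative} argument $-\min(m_j,n_j)/|l_j|$.

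Second, the actual content of the lemma---that the Gamma ratio and the transformed hypergeometric are each bounded by $1$, uniformly in $r_j$, $s$, $k_j$ and the argument---is precisely what you have not proved, and your proposed tools would not deliver the needed uniformity. The Euler integral representation bounds $|{}_2F_1(a,b;c;-x)|$ by $|\Gamma(c)|\Gamma(\Re b)\Gamma(\Re(c-b))\big/\bigl(\Gamma(\Re c)\,|\Gamma(b)|\,|\Gamma(c-b)|\bigr)$, which contains the factor $\Gamma(\tfrac12)/|\Gamma(\tfrac12+ir_j)|\asymp e^{\pi|r_j|/2}$; exponential loss in $r_j$ is fatal for incomplete Eisenstein series, where $\chi$ (hence $r_j$) is integrated over the whole unitary line and only super-polynomial decay of $\Psi^\wedge$ is available. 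Likewise, log-convexity of $\log\Gamma$ gives $\Gamma(\sigma+\nu)\Gamma(\sigma-\nu)\le\Gamma(\sigma+\tfrac12)\Gamma(\sigma-\tfrac12)$ only for real arguments, whereas the lemma needs $s$ complex and $\nu=ir_j$ possibly purely imaginary. The paper's Lemmas \ref{lem:technical-1} and \ref{lem:technical-2} obtain the clean bounds $\le 1$ by a different device: a monotonicity argument for $H=|F|^2+f|F'|^2$ along the hypergeometric differential equation (applied to the Gamma ratio through Kummer's formula). Without an argument of comparable strength and uniformity, your proof does not close.
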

\begin{proof}
  By the integral formula \cite[6.621.3]{GR} and the
  transformation formula \cite[9.131]{GR} in Gradshteyn-Ryzhik,
  we have explicitly
  \begin{equation}\label{eq:huge-equation}
    \begin{split}
      J_{i r_j}(l_j,n_j,s)
      &=
      \pm
     \frac{\Gamma(k_j - 1 + s)}{\Gamma(k_j-1)}
      \frac{
        \sqrt{m_j n_j}
      }
      {
        \left(4 \pi \max(m_j,n_j) \right)^s
      }
      \left(
        \frac{\min(m_j,n_j)}{\max(m_j,n_j)}
      \right)^{\frac{k_j-1}{2}} \\
      &\quad
      \cdot
      \frac{
        \Gamma(k_j+s-\tfrac{1}{2}+i r_j)
        \Gamma(k_j+s-\tfrac{1}{2}-i r_j)
      }{
        \Gamma(k_j+s-1)
        \Gamma(k_j+s)
      } \\
      &\quad
      \cdot 
      {}_2 F_1
      \left( \efrac{\tfrac{1}{2} - i r_j, \tfrac{1}{2} + i r_j}{
          k_j + s} ; - \frac{\min(m_j,n_j)}{|m_j-n_j|}  \right)
    \end{split}      
  \end{equation}
  where ${}_2 F_1$ is the Gauss hypergeometric function
  and the sign is given by
  $\prod \sgn(l_j)^{\eps_j}$.
  By the technical lemmas proved
  in \S\ref{sec:bounds-an-integral},
  the factors on the second and third lines
  of \eqref{eq:huge-equation}
  are each bounded in absolute value by $1$,
  so the claim follows from the basic inequality
  $|\Gamma(k_j-1+s)| \leq \Gamma(k_j-1+\sigma)$.
\end{proof}

\begin{corollary}\label{prop:bound-integral}
  Let $\chi_\infty \in \mathfrak{X}(\mathbb{F}_{\infty}^*)$ be
  of the form \eqref{eq:33} with each $i r_j \in i \mathbb{R}
  \cup (-\onehalf,\onehalf)$.  Then
  \begin{equation}\label{eq:14}
    I_{\chi_\infty}(l,n,x)
    \ll_{A}
    \sqrt{m^{\mathbf{1}} n^{\mathbf{1}}}
    \left(
      \frac{\mathbf{min}(m,n)}{\mathbf{max}(m,n)}
    \right)^{\frac{k-\mathbf{1}}{2}}
    \min \left( 1, \frac{k^{\mathbf{1}} x}{\mathbf{max}(m,n)^{\mathbf{1}}} \right)^{A}.
  \end{equation}
\end{corollary}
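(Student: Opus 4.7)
The plan is to estimate $I_{\chi_\infty}(l,n,x)$ through its Mellin integral representation \eqref{eq:32} by shifting the contour to two convenient vertical lines and applying the pointwise bound \eqref{eq:j-bound-legit} from Lemma \ref{lem:j-bound-legit} on each line, then taking the minimum of the resulting estimates. Since $h \in C_c^\infty(\mathbb{R}_+^*)$ is supported away from $0$ and $\infty$, its Mellin transform $h^\wedge(s)$ is entire and decays faster than any polynomial on every vertical strip, so the contour may be shifted freely without encountering poles and the resulting integrals over $s$ converge absolutely.

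For the trivial bound, shift the contour to $\Re(s) = 0$. On this line $x^s$ has absolute value $1$, the Gamma ratio $\Gamma(k_j - 1 + \sigma)/\Gamma(k_j - 1)$ equals $1$, and the scaling factor $(4\pi \max(m_j,n_j))^{-\sigma}$ is trivial, so taking the product of the bounds in \eqref{eq:j-bound-legit} over $j \in \{1,\dots,[\mathbb{F}:\mathbb{Q}]\}$ and integrating against the rapidly decaying $h^\wedge$ yields
\[
|I_{\chi_\infty}(l,n,x)| \;\ll\; \sqrt{m^{\mathbf{1}} n^{\mathbf{1}}} \left(\frac{\mathbf{min}(m,n)}{\mathbf{max}(m,n)}\right)^{(k-\mathbf{1})/2}.
\]

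For the bound that witnesses decay when $\mathbf{max}(m,n)^{\mathbf{1}}$ is much larger than $k^{\mathbf{1}} x$, shift instead to $\Re(s) = A$. Using the elementary estimate $\Gamma(k_j - 1 + A)/\Gamma(k_j - 1) \ll_A k_j^A$ uniformly for $k_j \geq 2$, the product of these Gamma ratios over $j$ is $\ll_A (k^{\mathbf{1}})^A$. Combining this with the factor $|x^s| = x^A$ and the factor $\prod_j \max(m_j,n_j)^{-A} = \mathbf{max}(m,n)^{-A \mathbf{1}}$ supplied by \eqref{eq:j-bound-legit} gives
\[
|I_{\chi_\infty}(l,n,x)| \;\ll_A\; \sqrt{m^{\mathbf{1}} n^{\mathbf{1}}} \left(\frac{\mathbf{min}(m,n)}{\mathbf{max}(m,n)}\right)^{(k-\mathbf{1})/2} \left(\frac{k^{\mathbf{1}} x}{\mathbf{max}(m,n)^{\mathbf{1}}}\right)^{A},
\]
and the minimum of the two bounds is exactly \eqref{eq:14}.

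I do not anticipate any genuine obstacle, as all the non-trivial analytic input — in particular the bound on $|J_{i r_j}|$ that is free of any $\max(m_j,n_j)/\tfrac{1}{2}(m_j+n_j)$ loss and instead exhibits the symmetric exponential suppression $(\min/\max)^{(k_j-1)/2}$ — is already furnished by Lemma \ref{lem:j-bound-legit}. The only point requiring mild care is to verify that the constants arising from the Gamma ratios and from the integrals $\int |h^\wedge(s)|\,|ds|$ on the two vertical lines depend only on $A$ (and on the fixed field $\mathbb{F}$ and test function $h$), but this is immediate from the rapid decay of $h^\wedge$ and the fact that $[\mathbb{F}:\mathbb{Q}]$ is fixed.
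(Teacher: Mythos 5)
Your proof is correct and follows essentially the same route as the paper, which likewise substitutes the bound of Lemma \ref{lem:j-bound-legit} into the factorization \eqref{eq:32}, takes the contour at $c \in \{0, A\}$, and invokes $\Gamma(k_j-1+\sigma)/\Gamma(k_j-1) \ll_\sigma k_j^{\sigma}$ before taking the minimum of the two resulting estimates. The only additions in your write-up are the (correct) remarks on the entirety and rapid decay of $h^\wedge$, which the paper leaves implicit.
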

\begin{proof}
  Substitute \eqref{eq:j-bound-legit}
  into \eqref{eq:32}, taking $c \in \{0,A\}$ and invoking
  the well known estimate $\Gamma(k_j-1+\sigma) / \Gamma(k_j-1)
  \ll_\sigma k_j^{\sigma}$ \cite[Ch 7,
  Misc. Ex 44]{MR0178117}.
\end{proof}
\begin{remark}\label{rmk:super-refined}
  With more effort
  (e.g., by studying the asymptotics
  of the expression \eqref{eq:huge-equation})
  one can show that if the
  components of the weight $k$ increase in such a way that
  $\min(k_1,\dotsc,k_{[\mathbb{F}:\mathbb{Q}]}) \gg
  (k^{\mathbf{1}})^{\delta_0}$ for some $\delta_0 > 0$, then
  (setting $\mathbf{log}(x) =
  (\log x_1,\dotsc,\log x_{[\mathbb{F}:\mathbb{Q}]})$
  for $x \in \mathbb{F}_{\infty+}^* \cong (\mathbb{R}_+^*)^{[\mathbb{F}:\mathbb{Q}]}$)
  \begin{equation*}
    \begin{split}
      I_{\chi_\infty}(l,n,x)
      = \sqrt{m^{\mathbf{1}} n^{\mathbf{1}}}
      &\left[
        \kappa_{\chi,\infty}
        \left( \frac{k - \mathbf{1} }{4 \pi }
          \left\lvert \mathbf{log} \frac{m}{n} \right\rvert
        \right)
        h \left( \frac{x \left( \frac{k - \mathbf{1} }{4 \pi }
            \right)^{\mathbf{1}}}{\mathbf{max}(m,n)^{\mathbf{1}}}
        \right)
      \right.
      \\
      &
      \left.
        \quad
        \, + \, \,
        O_{\chi_\infty}
        \left(
          (k^{\mathbf{1}})^{-\delta_0} \left( \frac{k^{\mathbf{1}}
              x}{\mathbf{max}(m,n)^{\mathbf{1}}} 
          \right)^{1+\eps}
        \right)
      \right].
    \end{split}
  \end{equation*}
  It follows with some work
  that for $\phi$ a Maass
  eigencuspform and $Y \geq 1$, we have
  \begin{equation*}
    \begin{split}
      \frac{\mu_f(\phi)}{\mu_f(1)}
      =
      O_{\phi}(Y^{-1/2}) +
      \frac{
        c_1(\mathbb{F})
      }{
        k^{\mathbf{1}} Y L(\ad f,1)
      }
      &
      \sum_{j=1}^{h(\mathbb{F})}
      \sum_{
        \substack{
          l \in \mathfrak{o} _+ ^\ast \backslash \mathfrak{z} _j \\
          0 \neq |l ^{\mathbf{1}}| < Y ^{1 + \eps }
        }
      }
      \frac{
        \lambda_{\phi}(\mathfrak{z}_j^{-1} l)
      }
      {
        \norm(\mathfrak{z}_j^{-1} l)^{1/2}
      }
      \\
      &\cdot 
      \sum_{
        \substack{
          n \in \mathfrak{z}
          \cap \mathbb{F}_{\infty+}^* \\
          m := n + l \in \mathfrak{z}
          \cap \mathbb{F}_{\infty+}^* \\
        }
      }
      \lambda_{f}(\mathfrak{z}^{-1} m)
      \lambda_{f}(\mathfrak{z}^{-1} n)
      \\
      &\cdot 
      \kappa_{\phi,\infty}\left( \frac{k - \mathbf{1} }{4 \pi }
        \left\lvert \mathbf{log} \frac{m}{n} \right\rvert
      \right)
      \frac{h \left( \frac{Y \norm(\mathfrak{z}) \left( \frac{k - \mathbf{1} }{4 \pi }
            \right)^{\mathbf{1}}}{\mathbf{max}(m,n)^{\mathbf{1}}} \right)
      }
      {
        \norm(\mathfrak{z})
      }.
    \end{split}
  \end{equation*}
  This refinement is not necessary for our
  purposes, so we omit the proof; the simpler upper bound given by
  Corollary \ref{prop:bound-integral} suffices because we do
   not exploit cancellation in the shifted sums, and has the
  advantage of being completely uniform in $\chi_{\infty}$.
\end{remark}

\begin{corollary}\label{cor:bound-shifted-sums}
  Let $\chi_\infty \in \mathfrak{X}(\mathbb{F}_{\infty}^*)$
  satisfy the hypotheses of Corollary
  \ref{prop:bound-integral}.
  Then the shifted sums
  $S_{\chi_\infty}(\mathfrak{z},l,Y)$ are bounded
  up to a multiple depending only upon $\mathfrak{z}$ and $A$
  by the quantity
  \begin{equation}\label{eq:24}
    \sum _{
      \substack{
        n \in \mathfrak{z}
        \cap \mathbb{F}_{\infty+}^* \\
        m := n + l \in \mathfrak{z}
        \cap \mathbb{F}_{\infty+}^* \\
      }}
    \left\lvert 
      \lambda_{f}(\mathfrak{z}^{-1} m)
      \lambda_{f}(\mathfrak{z}^{-1} n)
    \right\rvert
    \left(
      \frac{\mathbf{min}(m,n)}{\mathbf{max}(m,n)}
    \right)^{\frac{k-\mathbf{1}}{2}}
    \min \left( 1, \frac{k^{\mathbf{1}}
        Y}{\mathbf{max}(m,n)^{\mathbf{1}}} \right)^{A}.
  \end{equation}
\end{corollary}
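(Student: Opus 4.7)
The plan is to derive the corollary directly from Corollary \ref{prop:bound-integral} by substituting the pointwise bound on $I_{\chi_\infty}(l,n,\norm(\mathfrak{z})Y)$ into the definition \eqref{eq:11} of $S_{\chi_\infty}(\mathfrak{z},l,Y)$ and simplifying. The key observation is that the ``$\sqrt{m^{\mathbf{1}} n^{\mathbf{1}}}$'' factor appearing on the right-hand side of the bound for $I_{\chi_\infty}$ exactly matches the normalizing denominators $\norm(\mathfrak{z}^{-1}m)^{1/2} \norm(\mathfrak{z}^{-1}n)^{1/2}$ in the summand, up to a factor of $\norm(\mathfrak{z})$ that is harmless because $\mathfrak{z}$ is fixed.

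Concretely, first I would note that since $m, n \in \mathfrak{z} \cap \mathbb{F}_{\infty+}^*$ are totally positive, one has $\norm(\mathfrak{z}^{-1}m) = m^{\mathbf{1}}/\norm(\mathfrak{z})$ and likewise for $n$, so that
\[
\frac{1}{\norm(\mathfrak{z}^{-1}m)^{1/2} \norm(\mathfrak{z}^{-1}n)^{1/2}} \cdot \frac{1}{\norm(\mathfrak{z})} = \frac{1}{\sqrt{m^{\mathbf{1}} n^{\mathbf{1}}}}.
\]
Substituting the bound \eqref{eq:14} from Corollary \ref{prop:bound-integral}, applied with $x$ replaced by $\norm(\mathfrak{z})Y$, into the summand of \eqref{eq:11}, the $\sqrt{m^{\mathbf{1}} n^{\mathbf{1}}}$ cancels against these normalizing factors. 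What remains is exactly the summand in \eqref{eq:24}, except that the cutoff reads $\min(1, k^{\mathbf{1}} \norm(\mathfrak{z}) Y / \mathbf{max}(m,n)^{\mathbf{1}})^A$ rather than $\min(1, k^{\mathbf{1}} Y / \mathbf{max}(m,n)^{\mathbf{1}})^A$.

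To absorb the discrepancy $\norm(\mathfrak{z})$ in the cutoff, I would invoke the elementary inequality $\min(1, \alpha t)^A \leq \max(1,\alpha)^A \min(1,t)^A$ valid for $\alpha, t > 0$, applied with $\alpha = \norm(\mathfrak{z})$ and $t = k^{\mathbf{1}} Y / \mathbf{max}(m,n)^{\mathbf{1}}$. Since $\norm(\mathfrak{z})$ depends only on $\mathfrak{z}$, the resulting factor $\norm(\mathfrak{z})^A$ is an acceptable implied constant. There is no substantive obstacle here: the estimate is essentially a bookkeeping exercise, and the only step requiring care is tracking the powers of $\norm(\mathfrak{z})$ that appear from the scaling between $\mathfrak{z}^{-1}m$, $\mathfrak{z}^{-1}n$ (which parametrize integral ideals) and $m, n$ themselves (which live in $\mathfrak{z}$). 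Since these cancel exactly in the main factor and are absorbed into the constant in the cutoff, the claimed bound \eqref{eq:24} follows.
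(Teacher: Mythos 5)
Your proposal is correct and is exactly the paper's argument: the paper's proof is the one-line "substitute Corollary \ref{prop:bound-integral} into Definition \ref{defn-shifted-sums}," and you have simply carried out that substitution explicitly, with the bookkeeping of the $\norm(\mathfrak{z})$ factors (via $\norm(\mathfrak{z}^{-1}m)=m^{\mathbf{1}}/\norm(\mathfrak{z})$ and the inequality $\min(1,\alpha t)^A\leq\max(1,\alpha)^A\min(1,t)^A$) done correctly.
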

\begin{proof}
  Substitute Corollary \ref{prop:bound-integral}
  into Definition \ref{defn-shifted-sums}.
\end{proof}

\begin{remark}\label{rmk:comparison}
  When $\mathbb{F} = \mathbb{Q}$, Holowinsky
  applies what amounts to the trivial bound
  \eqref{eq:j-bound-trivial}, which gives something like
  \eqref{eq:24}
  upon replacing
  \begin{equation}\label{eq:silly-factor-comparison}
    \left(
      \frac{\mathbf{min}(m,n)}{\mathbf{max}(m,n)}
    \right)^{\frac{k-\mathbf{1} }{2}}
    =
    \prod_{j=1}^{[\mathbb{F}:\mathbb{Q}]}
    \left(
      \frac{\min(m_j,n_j)}{\max(m_j,n_j)}
    \right)^{\frac{k_j-1}{2}}
    \text{ by }
    \prod_{j=1}^{[\mathbb{F}:\mathbb{Q}]}
    \left(
      \frac{\sqrt{m_j n_j}}{\left(\frac{m_j+n_j}{2}\right)}
    \right)^{k-\mathbf{1}}.
  \end{equation}
  He then bounds the factor on the RHS of
  \eqref{eq:silly-factor-comparison}
  by $1$.
  Now, bounding either of the factors
  in \eqref{eq:silly-factor-comparison} is harmless
  when
  $\mathbb{F}  = \mathbb{Q}$:
  if $f$ has weight $k$,
  then in the sum
  \eqref{eq:24}
  we typically have $m,n \asymp k Y$,
  so
  for $|l| = O(1)$ both factors
  in \eqref{eq:silly-factor-comparison} are typically
  $\asymp 1$.
  On the other hand, when $d = [ \mathbb{F} : \mathbb{Q}] > 1$
  it is costly to apply such bounds prematurely:
  the sum \eqref{eq:24}
  then has roughly $x \log(x)^{d-1}$
  nonnegligible
  terms
  with $x = k^{\mathbf{1}} Y$, and this extra logarithmic factor
  ``$\log(x)^{d-1}$''
  turns out to be unaffordable in the application
  to mass equidistribution.
  One can show that the savings obtained by treating nontrivially
  the factor on the RHS of \eqref{eq:silly-factor-comparison}
  are negligible even for $d > 1$.
  Thus the success of our
  method when $\mathbb{F} \neq \mathbb{Q}$
  depends crucially on the more careful treatment afforded by Corollary
  \ref{prop:bound-integral}.
  In fact,
  the key to our whole
  argument is that the factor
  on the LHS of \eqref{eq:silly-factor-comparison}
  is very small if \emph{any component} of $\mathbf{max}(m,n)$ is not too
  large,
  as we quantify in Lemma \ref{lem:sum-dyadic}.
\end{remark}

\begin{definition}\label{defn:essential-sums}
  Given parameters
  $T = (T_1, \dotsc, T_d) \in \mathbb{R}_{\geq 1}^{[\mathbb{F}:\mathbb{Q}]}$
  and $U \in \mathbb{R}_{\geq 1}$,
  let
  \begin{equation*}
    \mathcal{R}_{T,U} =
    \left\{ x \in \mathbb{R}^{[\mathbb{F}:\mathbb{Q}]}:
      x^{\mathbf{1}} \leq
      T^{\mathbf{1}},
      x \geq  T/U
    \right\}
  \end{equation*}
  be the subregion of $\mathbb{R}_{>0}^{[\mathbb{F}:\mathbb{Q}]}$
  bounded by the hyperbola
  $\{ \prod x_i = \prod T_i\}$
  and the hyperplanes $\{x_i = T_i/U\}$.
  For a multiplicative
  function $\lambda : I_\mathbb{F} \rightarrow \mathbb{C}$,
  an ideal $\mathfrak{z}$ in $\mathbb{F}$ and
  an element $l \in \mathfrak{z}$,
  let
  \begin{equation}\label{eq:12}
    \Sigma_\lambda(\mathfrak{z},l,T,U)
    := \sum
    _{
      \substack{
        n \in \mathfrak{z} \\
        m := n + l \in \mathfrak{z} \\
        \mathbf{max}(m,n) \in \mathcal{R}_{T,U}
      }
    }
    \lvert \lambda (\mathfrak{z} ^{-1} m ) \lambda(\mathfrak{z}
    ^{-1} n)
    \rvert.
  \end{equation}
\end{definition}

\begin{lemma}\label{lem:sum-dyadic}
  Let $\chi \in \mathfrak{X}(\mathbb{F}_{\infty+}^*)$ satisfy
  the hypotheses of Corollary \ref{prop:bound-integral},
  let
  \[
  d = [\mathbb{F}:\mathbb{Q}],
  \quad
  T = (T_1,\dotsc,T_d)
  \text{ with }T_i = k_i Y^{1/d},
  \quad
  X = T_1 \dotsc T_d = k^{\mathbf{1}} Y,
  \]
  and let $U = \exp(\log(X)^\eps)$.
  Suppose that  $1 \leq Y \ll \log(X)^{O(1)}$.
  Then for any ideal $\mathfrak{z}$, any nonzero
  shift $l \in \mathfrak{z} \cap \mathbb{F}^*$,
  and any positive integer $A$, we have
  \begin{equation}\label{eq:sum-dyadic}
    S_{\chi_\infty}(l,n,Y)
    \ll_{\mathfrak{z},A} X^{-A} + \sum_{r=0}^\infty
    2^{- r d A}
    \Sigma_{\lambda_f}(\mathfrak{z},l,2^{r+1} T, 2^{r+1} U).
  \end{equation}
\end{lemma}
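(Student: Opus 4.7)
The plan is to apply Corollary~\ref{cor:bound-shifted-sums}, which furnishes the pointwise bound
\[
|S_{\chi_\infty}(\mathfrak{z}, l, Y)| \ll_{\mathfrak{z}, A} \sum_{\substack{n \in \mathfrak{z} \cap \mathbb{F}_{\infty+}^* \\ m = n + l \in \mathfrak{z} \cap \mathbb{F}_{\infty+}^*}} |\lambda_f(\mathfrak{z}^{-1}m)\lambda_f(\mathfrak{z}^{-1}n)| \, W(m,n)
\]
with weight $W(m,n) = \bigl(\tfrac{\mathbf{min}(m,n)}{\mathbf{max}(m,n)}\bigr)^{(k-\mathbf{1})/2} \min\bigl(1, \tfrac{X}{\mathbf{max}(m,n)^{\mathbf{1}}}\bigr)^{\! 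A}$, and then geometrically partition the range of $x := \mathbf{max}(m,n) \in \mathbb{R}_{>0}^d$ into a \emph{good} region $\mathcal{G} = \{x : x \geq T/U\}$ and an \emph{exceptional} region $\mathcal{E} = \{x : x_j < T_j/U \text{ for some } j\}$.

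On $\mathcal{G}$, I would decompose dyadically by the size of $x^{\mathbf{1}}$: for $r \geq 0$, let $\mathcal{G}_r$ denote the set of $x \in \mathcal{G}$ with $x^{\mathbf{1}} < 2^{(r+1)d}X$ and (for $r \geq 1$) $x^{\mathbf{1}} \geq 2^{rd}X$, so that $\mathcal{G} = \bigsqcup_{r \geq 0} \mathcal{G}_r$. By construction $\mathcal{G}_r \subset \mathcal{R}_{2^{r+1}T,\,2^{r+1}U}$ (note $2^{r+1}T/(2^{r+1}U) = T/U$, so the componentwise lower bound is preserved under the dyadic scaling), and on $\mathcal{G}_r$ the weight satisfies $\min(1, X/x^{\mathbf{1}})^A \leq 2^{-rdA}$. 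Bounding $(\mathbf{min}/\mathbf{max})^{(k-\mathbf{1})/2}$ trivially by $1$, the contribution of $\mathcal{G}_r$ to the sum is at most $2^{-rdA}\,\Sigma_{\lambda_f}(\mathfrak{z}, l, 2^{r+1}T, 2^{r+1}U)$; summing over $r \geq 0$ produces the main term of~\eqref{eq:sum-dyadic}.

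The main obstacle is the exceptional region $\mathcal{E}$, where the total contribution must be shown to be $\ll_{\mathfrak{z}, A} X^{-A}$. The key observation (cf.~Remark~\ref{rmk:comparison}) is that for an index $j$ with $x_j < T_j/U$, setting $\alpha_j = 1 - |l_j|/x_j$ and using the elementary inequality $\alpha \leq e^{-(1-\alpha)}$, we obtain
\[
\alpha_j^{(k_j-1)/2} \leq \exp\!\left(-\frac{(k_j-1)\,|l_j|}{2\, x_j}\right) \leq \exp\!\left(-\frac{|l_j|\,U}{4\, Y^{1/d}}\right),
\]
since $T_j = k_j Y^{1/d}$ and $(k_j-1)/k_j \geq 1/2$. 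Because $U = e^{(\log X)^\eps}$ grows super-polylogarithmically while $Y^{1/d} \ll (\log X)^{O(1)}$, this factor is $\ll_A X^{-A}$ whenever the corresponding $|l_j|$ exceeds a fixed positive constant.

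The lingering technicality is the configuration in which $|l_j|$ is tiny for every index $j$ in the exceptional set. Here one exploits that $0 \neq l \in \mathfrak{z}$ implies $\prod_j |l_j|$ is bounded below by $\norm(\mathfrak{z})$, so smallness of $|l_j|$ on exceptional indices forces $|l_j|$ on the non-exceptional indices to be large; combining this rigidity with a lattice-point count of $n \in \mathfrak{z}$ inside the thin slab $\{n_j < T_j/U : j \text{ exceptional}\}$ and the Ramanujan bound $|\lambda_f| \leq \tau$ closes the estimate. I expect this combinatorial case analysis, rather than the dyadic decomposition itself, to be the genuinely delicate part.
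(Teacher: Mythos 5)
Your dyadic decomposition on the ``good'' region $\{\mathbf{max}(m,n) \geq T/U\}$ matches the paper's argument, and you correctly isolate the difficulty: the exceptional region where $\max(m_j,n_j) < T_j/U$ for some $j$, and the exponential saving $\exp(-|l_j|U/(cY^{1/d}))$ fails when $|l_j|$ is tiny.

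The gap is that you never close this case, and the route you sketch for it is not the one that works cleanly. The paper dispatches it with a \emph{unit normalization} trick you did not find: since $S_{\chi_\infty}(\mathfrak{z}, \eta l, Y) = S_{\chi_\infty}(\mathfrak{z}, l, Y)$ for every totally positive unit $\eta \in \mathfrak{o}_+^*$ (substitute $n \mapsto \eta n$, $y \mapsto \eta^{-1}y$ in $I_{\chi_\infty}$; ideals and $y^{\mathbf{1}}$ are unchanged), one may replace $l$ by a balanced representative with $|l_i| \asymp_{\mathfrak{z}} |l_j|$ for all $i,j$; combined with $\prod_i |l_i| \geq \norm(\mathfrak{z})$ this gives $|l_i| \geq c(\mathbb{F},\mathfrak{z}) > 0$ at \emph{every} index, so the exponential factor is $\ll_A X^{-A}$ uniformly on the exceptional region. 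After that the paper simply uses the trivial Hecke bound $\lambda_f(\mathfrak{a}) \ll \norm(\mathfrak{a})^{1/2+\eps}$ and a crude lattice-point count to absorb everything into $X^{-A}$. Your proposed alternative — arguing that smallness of $|l_j|$ at exceptional $j$ forces $|l_i|$ to be large at non-exceptional $i$, then exploiting that rigidity — forces you to track which components of $\mathbf{min}(m,n)/\mathbf{max}(m,n)$ provide savings, and as you say yourself you ``expect this ... to be the genuinely delicate part.'' That delicacy is precisely what the unit trick eliminates; your proposal as written leaves it unresolved and so does not constitute a proof.

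One further caution: the bound on $\prod_j |l_j|$ from $0 \neq l \in \mathfrak{z}$ is $\prod_j |l_j| = |\norm_{\mathbb{F}/\mathbb{Q}}(l)| \geq \norm(\mathfrak{z})$, which you state, but this alone does not give a \emph{per-component} lower bound without the freedom to move $l$ by units — so the rigidity is weaker than you make it sound until you invoke the unit action, at which point you should just invoke it at the level of $S_{\chi_\infty}$ as the paper does.
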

\begin{proof}
  We work with the bound asserted by Corollary
  \ref{cor:bound-shifted-sums}.  Partition those $m,n$ in
  \eqref{eq:24} for which $\mathbf{max}(m,n) \geq T/U$ according to the
  least integer $r \geq 0$ such that $\mathbf{max}(m,n)^{\mathbf{1}} \leq 2^r
  X$; their contribution is bounded by the second term on the RHS
  of \eqref{eq:sum-dyadic}.  It remains to consider those $m,n$
  for which
  \begin{equation}\label{eq:13}
    \max(m_i,n_i) \leq T_i/U
  \end{equation}
  for some index $i \in \{1,\dotsc,d\}$.
  The elementary inequality $1
  - x \leq \exp(-x)$ and the tautology $\mathbf{min}(m,n) + |l| =
  \mathbf{max}(m,n)$ show that
  \begin{equation*}
    \left(
      \frac{\mathbf{min}(m,n)}{\mathbf{max}(m,n)}
    \right)^{\frac{k-\mathbf{1}}{2}}
    \leq \exp \left(
      - \sum_{j=1}^d \frac{k_j-1}{2} \frac{|l_j|}{\max(m_j,n_j)}
    \right),
  \end{equation*}
  so the assumption \eqref{eq:13} implies
  \begin{equation}\label{eq:16}
    \left(
      \frac{\mathbf{min}(m,n)}{\mathbf{max}(m,n)}
    \right)^{\frac{k-\mathbf{1} }{2}}
    \leq \exp
    \left( - \frac{|l_i| U}{3 Y^{1/d}} \right).
  \end{equation}
  Here we may and shall assume that the shift $l$ is balanced in
  the sense that $|l_i| \asymp_{\mathfrak{z}} |l_j|$ for all $i, j
  \in \{1, \dotsc, \mathbb{F} \}$ since $S_{\chi_\infty}(\eta
  l,n,Y)= S_{\chi_\infty}(l,n,Y)$ for any totally positive unit
  $\eta \in \mathfrak{o}_+^*$; in particular, we may assume that there
  exists a positive number $c$, depending only upon the fixed
  number field $\mathbb{F}$ and the fixed set of representatives
  $\{\mathfrak{z}_1, \dotsc, \mathfrak{z}_{h(\mathbb{F})}\}$ for
  the narrow class group, such that $|l_i| \geq c$ for each $i$.
  Since $Y \ll \log(X)^{O(1)}$ by assumption,
  our choice $U= \exp(\log(X)^\eps)$ is (more than) large enough
  that
  for each positive real
  $A$ the inequality
  \[
  \frac{c U}{3 Y^{1/d}}
  \geq  A \log(X)
  \]
  holds eventually
  (i.e., for $\max(k_1,\dotsc,k_d) \gg 1$),
  so by \eqref{eq:16} we obtain
  \begin{equation}\label{eq:15}
    \left(
      \frac{\mathbf{min}(m,n)}{\mathbf{max}(m,n)}
    \right)^{\frac{k-\mathbf{1} }{2}}
    \ll_A X^{-A}.
  \end{equation}
  By the trivial ``Hecke'' bound $\lambda_f(\mathfrak{a}) \ll
  \norm(\mathfrak{a})^{1/2+\eps}$, the contribution to
  \eqref{eq:24} of $n$ satisfying
  \eqref{eq:13} is
  \begin{align}\label{eq:25}
    \nonumber &\ll
    X^{-A'}
    \sum _{
      \substack{
        n \in \mathfrak{z}
        \cap \mathbb{F}_{\infty+}^* \\
        m := n + l \in \mathfrak{z}
        \cap \mathbb{F}_{\infty+}^* \\
      }}
    \left\lvert 
      \lambda_{f}(\mathfrak{z}^{-1} m)
      \lambda_{f}(\mathfrak{z}^{-1} n)
    \right\rvert
    \min \left( 1, \frac{X}{\mathbf{max}(m,n)^{\mathbf{1}}} \right)^{A}  \\
    &\ll
    X^{-A'}
    \sum _{
      \substack{
        n \in \mathfrak{z}
        \cap \mathbb{F}_{\infty+}^* \\
        m := n + l \in \mathfrak{z}
        \cap \mathbb{F}_{\infty+}^* \\
      }}
    (m^{\mathbf{1}} n^{\mathbf{1}})^{1/2+\eps}
    \min \left( 1, \frac{X}{\mathbf{max}(m,n)^{\mathbf{1}}} \right)^{A}
  \end{align}
  for any $A, A' > 0$.  Since $|l|_i \geq c$, the number of $n
  \in \mathfrak{z} \cap \mathbb{F}_{\infty+}^*$ for which $n + l
  \in \mathfrak{z} \cap \mathbb{F}_{\infty+}^*$ and
  $\mathbf{max}(m,n)^{\mathbf{1}} \leq 2^r X$ ($r \geq 0$) is $\ll (2^r
  X)^{d}$.  Choosing $A = 1+2 \eps +
  d + 1$, summing dyadically, and taking
  $A'$ to be sufficiently large, we see that \eqref{eq:25} is
  $\ll_{A''} X^{-A''}$ for any positive constant $A''$,
  as desired.
\end{proof}

The volume of $\mathcal{R}_{T,U}$ is approximately $X
\log(U)^{d-1} = X \log(X)^{(d-1)\eps}$.  Since the number of
nonnegligible terms appearing in $S_{\chi_\infty}(l,n,Y)$ is approximately
$X \log(X)^{d-1}$, we see that Lemma \ref{lem:sum-dyadic}
allows us to discard the vast majority of those terms.
We treat the remaining
$\approx X \log(X)^{\eps '}$ terms by the following
generalization of Holowinsky's
bound for shifted sums
of multiplicative functions \cite[Thm 2]{holowinsky-2008}.
\begin{theorem}\label{thm:essential-sums}
  Let $T \in \mathbb{R}_{\geq 1}^{[\mathbb{F}:\mathbb{Q}]}$,
  $U \in \mathbb{R}_{\geq 1}$, $\mathfrak{z}$, $l$ and $\lambda
  : I_{\mathbb{F}} \rightarrow \mathbb{C}$ be as in
  Definition \ref{defn:essential-sums}.  Suppose that $l \neq 0$
  and that $|\lambda(\mathfrak{a})| \leq \tau(\mathfrak{a})$ for
  all integral ideals $\mathfrak{a}$.
  Set $X = T^{\mathbf{1}}$ and $d =
  [\mathbb{F}:\mathbb{Q}]$.  Then
  \begin{equation}\label{eq:21}
    \Sigma_{\lambda}(\mathfrak{z},l,T,U)
    \ll_{\mathfrak{z},\eps}
    \frac{\log(eU)^{d-1} X}{\log(e X)^{2-\eps}}
    \prod _{
      \substack{
        \norm (\mathfrak{p}) \leq X \\
      }
    }
    \left( 1
      + \frac{2 |\lambda(\mathfrak{p})|}{\norm( \mathfrak{p}) } \right).
  \end{equation}
  Here the product is taken over prime ideals of
  norm at most $X$.
\end{theorem}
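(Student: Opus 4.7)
The proof will adapt Holowinsky's sieve-theoretic treatment of shifted convolution sums to the totally real number field setting, with a refinement designed precisely to eliminate the $\tau(l)$ factor that appears in his original bound. The overall strategy has four steps: (i) dominate $|\lambda|$ pointwise by a divisor sum against a controlled multiplicative auxiliary $g$; (ii) exchange the order of summation to reduce to counting lattice points in $\mathfrak{z} \cap \mathcal{R}_{T,U}$ subject to paired congruence conditions on $n$ and $n+l$; (iii) estimate that count by the volume of $\mathcal{R}_{T,U}$ divided by the appropriate norm, with a controlled error; (iv) factor the resulting Dirichlet series as an Euler product and compare with the target.

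First I would establish the sieve decomposition: using $|\lambda(\mathfrak{a})| \leq \tau(\mathfrak{a})$ and multiplicativity, I bound $|\lambda(\mathfrak{m})| \leq \sum_{\mathfrak{d} \mid \mathfrak{m}} g(\mathfrak{d})$ with $g$ a nonnegative multiplicative function \emph{supported on squarefree ideals} and satisfying $g(\mathfrak{p}) \leq 2|\lambda(\mathfrak{p})|$. The squarefree support is the crucial refinement: it replaces the full divisor decomposition underlying Holowinsky's $\tau(l)$ by one whose local factors at primes dividing $l$ remain $O(1)$. Substituting, I get
\[
\Sigma_\lambda(\mathfrak{z},l,T,U) \leq \sum_{\mathfrak{a},\mathfrak{b}} g(\mathfrak{a}) g(\mathfrak{b}) \, N(\mathfrak{a},\mathfrak{b}),
\]
where $N(\mathfrak{a},\mathfrak{b})$ counts $n \in \mathfrak{z}$ with $\mathbf{max}(n,n+l) \in \mathcal{R}_{T,U}$, $\mathfrak{z}\mathfrak{a} \mid n$, and $\mathfrak{z}\mathfrak{b} \mid n+l$.

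Next I would show by the Chinese Remainder Theorem in $\mathfrak{o}$ that $N(\mathfrak{a},\mathfrak{b}) = 0$ unless $\gcd(\mathfrak{a},\mathfrak{b}) \mid \mathfrak{z}^{-1}l$, in which case the lattice count equals $\mathrm{vol}(\mathcal{R}_{T,U})/(\norm(\mathfrak{z})^2 \norm([\mathfrak{a},\mathfrak{b}]))$ up to a boundary error controlled by the $(d{-}1)$-dimensional measure of $\partial \mathcal{R}_{T,U}$ divided by a smaller power of $\norm([\mathfrak{a},\mathfrak{b}])$. A direct integration gives $\mathrm{vol}(\mathcal{R}_{T,U}) \asymp T^{\mathbf{1}} \log(eU)^{d-1}$, the $(d{-}1)$ logarithmic factor arising from the $(d{-}1)$ free coordinates along the hyperbola $x^{\mathbf{1}} = T^{\mathbf{1}}$. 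After a fundamental domain choice for $\mathfrak{o}_+^*$ compatible with the region (handled uniformly in $\mathfrak{z} \in \{\mathfrak{z}_1,\dots,\mathfrak{z}_{h(\mathbb{F})}\}$), the main term becomes
\[
\frac{\mathrm{vol}(\mathcal{R}_{T,U})}{\norm(\mathfrak{z})^2} \sum_{\substack{\mathfrak{a},\mathfrak{b} \\ \gcd(\mathfrak{a},\mathfrak{b}) \mid \mathfrak{z}^{-1} l}} \frac{g(\mathfrak{a}) g(\mathfrak{b})}{\norm([\mathfrak{a},\mathfrak{b}])},
\]
whose Euler product has local factors $1 + 2g(\mathfrak{p})/\norm(\mathfrak{p}) + O(\norm(\mathfrak{p})^{-2})$ at $\mathfrak{p} \nmid l$ and $1 + 2g(\mathfrak{p})/\norm(\mathfrak{p}) + g(\mathfrak{p})^2/\norm(\mathfrak{p})$ at $\mathfrak{p} \mid l$. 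Because $g(\mathfrak{p}) \leq 2$, each local factor at $\mathfrak{p} \mid l$ is uniformly $O(1)$, and the whole product over $\mathfrak{p} \mid l$ is absorbed into the implied constant. Applying the number field Mertens theorem to extract the $\log(eX)^{-2+\eps}$ saving completes the passage to the advertised Euler product in $|\lambda(\mathfrak{p})|$.

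The main obstacle, as usual in sieve arguments of this type, is the error analysis in step (iii). Summing the boundary error term against $g(\mathfrak{a})g(\mathfrak{b})$ requires that the sieve level $D$ (implicit in truncating the divisor sum) be chosen so that the total error is bounded by the main term $\log(eU)^{d-1} X/\log(eX)^{2-\eps}$; since $\mathcal{R}_{T,U}$ is long and thin rather than a ball, the boundary measure is proportional to $T^{\mathbf{1}} \log(eU)^{d-1}/\min_i(T_i/U)$, so a dyadic decomposition of the divisor sum and a Brun--Titchmarsh type count for ideals in residue classes are needed to recover a clean bound. A secondary technical point is verifying that the squarefree majorant $g$ actually dominates $|\lambda|$ globally (not just locally at squarefree arguments); for this one uses the fact that $|\lambda(\mathfrak{p}^k)| \leq k+1 \leq \binom{k+1}{1}$ can be bounded by a sum of two copies of a squarefree sieve weight plus a negligible higher-prime-power tail, which is where the level cutoff $D$ enters.
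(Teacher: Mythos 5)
There is a fatal structural gap in step (i): a nonnegative divisor-sum majorant cannot produce the factor $\log(eX)^{-2}$ in the claimed bound, and this factor is not cosmetic. Since $g$ is multiplicative and nonnegative, $g(\mathfrak{o})=1$, so $\sum_{\mathfrak{d}\mid\mathfrak{m}} g(\mathfrak{d}) \geq 1$ for every $\mathfrak{m}$, and the term $\mathfrak{a}=\mathfrak{b}=\mathfrak{o}$ of your double sum already contributes $N(\mathfrak{o},\mathfrak{o}) \asymp_{\mathfrak{z}} X\log(eU)^{d-1}$, the unrestricted lattice-point count. But the right-hand side of \eqref{eq:21} is
\[
\frac{\log(eU)^{d-1}X}{\log(eX)^{2-\eps}}\prod_{\norm(\mathfrak{p})\leq X}\Bigl(1+\tfrac{2|\lambda(\mathfrak{p})|}{\norm(\mathfrak{p})}\Bigr)
\asymp \log(eU)^{d-1}X\,\log(eX)^{\eps}\prod_{\norm(\mathfrak{p})\leq X}\Bigl(1+\tfrac{2(|\lambda(\mathfrak{p})|-1)}{\norm(\mathfrak{p})}\Bigr),
\]
which is $o\bigl(X\log(eU)^{d-1}\bigr)$ whenever $|\lambda(\mathfrak{p})|$ is on average bounded away from $1$ from below (e.g.\ $\lambda(\mathfrak{p})=0$ for all $\mathfrak{p}$ gives a right-hand side of size $X\log(eU)^{d-1}\log(X)^{-2+\eps}$). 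So your majorant exceeds the quantity you are trying to prove is an upper bound, and no choice of $g$, level $D$, or error analysis can repair this. This is exactly the regime that matters for the QUE application, where one expects $|\lambda_f(\mathfrak{p})|$ to average $8/3\pi<1$. The same objection applies to the Nair--Tenenbaum style main term: the local factors $1+2g(\mathfrak{p})/\norm(\mathfrak{p})$ with $g\geq 0$ can never reproduce the factors $1+2(|\lambda(\mathfrak{p})|-1)/\norm(\mathfrak{p})<1$. (Your observation about the $\mathfrak{p}\mid l$ local factors being $O(1)$, which is how you propose to kill $\tau(l)$, is sound in isolation, but it is moot here.)

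The saving of $\log(X)^{-2}$ is a genuine \emph{sieve} saving: one must group $n$ according to the \emph{exact} small-prime parts of both $\mathfrak{z}^{-1}n$ and $\mathfrak{z}^{-1}(n+l)$ (not merely impose divisibility), because an integer whose $z$-smooth part is exactly $\mathfrak{a}$ avoids the residue class $0 \bmod \mathfrak{p}$ for every prime $\mathfrak{p}$ of norm at most $z$ not dividing $\mathfrak{a}$, and it is the product of the resulting densities $(1-1/\norm(\mathfrak{p}))$ over two coprimality conditions that yields $\log(z)^{-2}$. Concretely, the paper first reduces $\mathcal{R}_{T,U}$ to $\ll\log(eU)^{d-1}$ balanced boxes using the $\mathfrak{o}_+^*$-action (this also disposes of your lattice-point/boundary worries, since ideal lattices are automatically balanced after acting by units), then partitions $n$ by its ``$z$-datum'' $(\mathfrak{a},\mathfrak{b},\mathfrak{c})$ recording the coprime and common $z$-smooth parts of $m$ and $n$, handles the data with large smooth part via Dickman-function estimates, and bounds the count of $n$ with a prescribed $z$-datum by a Montgomery-type large sieve inequality, whose denominator $H\gg(\phi(\mathfrak{m})/|\mathfrak{m}|)\log(z)^2$ is the source of the crucial saving; the removal of $\tau(l)$ comes from the weight $|\mathfrak{c}|^{-2}$ in the resulting sum over common divisors $\mathfrak{c}\mid\mathfrak{z}^{-1}l$. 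You would need to restructure your argument along these lines.
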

\begin{proof}
  See \S\ref{sec:bounds-shifted-sums}.
\end{proof}
\begin{remark}
  Holowinsky \cite[Thm 2]{holowinsky-2008} established a
  slightly weaker form of the case $d = 1$ of Theorem
  \ref{thm:essential-sums} by an application of the large sieve;
  in his inequality \eqref{eq:35} an additional factor of
  $\tau(l)$ appears on the RHS.  We prove Theorem
  \ref{thm:essential-sums} by adapting his approach, with the
  only difficulty being that the regions $\mathcal{R}_{T,U}$ are
  shaped quite differently when $d > 1$.

  If one is willing to sacrifice uniformity in the shift $l$,
  then alternate proofs of the corresponding
  weakening of Holowinsky's \cite[Thm
  2]{holowinsky-2008} and (probably) our Theorem
  \ref{thm:essential-sums} can be obtained by the general
  estimates due to Nair \cite{MR1197420} and Nair-Tenenbaum
  \cite{MR1618321} for sums $\sum_n \lambda(|P(n)|)$ with $P$ a
  (primitive, possibly multivariate) polynomial (for example,
  $P(n) = n(n+l)$) and $n$ traversing a box;
  note that in all of the
  bounds asserted by Nair and Nair-Tenenbaum, the implied
  constants depend in an unspecified manner upon the
  discriminant and degree of $P$.  This seems insufficient in
  the application to QUE where the shift $l$ must vary
  (particularly when $\phi$ is an incomplete Eisenstein series,
  see \cite{aws2010sound}).

  We refer to \cite[Rmk 3.11]{PDN-HQUE-LEVEL} for a further
  discussion of variations on the $d=1$ case of
  Theorem \ref{thm:essential-sums} that may be derived from other works
  and particularly their applicability to QUE in the level
  aspect.
\end{remark}

\begin{proof}[Proof of Theorem \ref{thm:holow}]
  Let $Y \geq 1$ be a parameter
  (to be chosen at the end of the proof)
  that satisfies $Y \ll \log(k^{\mathbf{1}})^{O(1)}$.
  Preserve the hypotheses and notation $d=[\mathbb{F}:\mathbb{Q}]$,
  $T = Y^{1/d} k$, $X = T^{\mathbf{1}} = k^{\mathbf{1}} Y$
  and $U = \exp(\log(X)^\eps)$ from above.
  Lemma \ref{lem:sum-dyadic}
  and Theorem \ref{thm:essential-sums} show that
  \begin{equation}
    S_{\chi_\infty}(l,n,Y)
    \ll_{A,\eps}
    X^{-A}
    + \sum_{r=0}^{\infty} 2^{-r d A}
    \frac{\log(2^r e U)^{d-1} 2^{r d} X}{ \log(2^{r d} X)^{2-\eps}}
    \prod _{
      \substack{
        \norm (\mathfrak{p}) \leq 2^r X \\
      }
    }
    \left( 1
      + \frac{2 |\lambda_f(\mathfrak{p})|}{\norm( \mathfrak{p}) }
    \right).
  \end{equation}
  Taking $A = 2$ and using that
  \[
  \sum_{r=0}^\infty
  2^{r d -r d A}
  \log(2^r e U)^{d-1}
  \prod _{
    \substack{
      X < \norm (\mathfrak{p}) \leq 2^r X \\
    }
  }
  \left( 1
    + \frac{4}{\norm( \mathfrak{p}) } \right)
  \ll_\eps \log(X)^{(d-1) \eps}
  \]
  gives
  \[
  S_{\chi_\infty}(l,n,Y)
  \ll_\eps
  \frac{X}{\log(X)^{2-\eps'}}
  \prod _{
    \substack{
      \norm (\mathfrak{p}) \leq X \\
    }
  }
  \left( 1
    + \frac{2 |\lambda_f(\mathfrak{p})|}{\norm( \mathfrak{p}) }
  \right),
  \]
  where $\eps ' = d \eps$.
  Thus
  \begin{equation}
    S_\phi(Y)
    \ll_{\phi,\eps}
    \frac{k^{\mathbf{1}}
      Y^{3/2+\eps}}{\log(k^{\mathbf{1}})^{2-\eps'} }
    \prod_{\norm(\mathfrak{p}) \leq k^{\mathbf{1}}}
    \left( 1 + \frac{2 \lvert \lambda_f(\mathfrak{p}) \rvert}{\norm(\mathfrak{p})} \right),
  \end{equation}
  since the sum over $l$
  in
  Definition
  \ref{defn-shifted-sums}
  introduces the additional factor
  \[
  \sum_{
    \substack{
      0 \neq \mathfrak{a} \subset \mathfrak{o} \\
      \norm(\mathfrak{a}) < Y^{1+\eps}
    }
  }
  \frac{|\lambda_\phi(\mathfrak{a})|}{\norm(\mathfrak{a})^{1/2}}
  \leq 
  \left(
    \sum_{
      \substack{
        0 \neq \mathfrak{a} \subset \mathfrak{o} \\
        \norm(\mathfrak{a}) < Y^{1+\eps}
      }
    }
    | \lambda_\phi(\mathfrak{a})|^2
    \sum_{
      \substack{
        0 \neq \mathfrak{b} \subset \mathfrak{o} \\
        \norm(\mathfrak{b}) < Y^{1+\eps}
      }
    }
    \frac{1}{\norm(\mathfrak{b})}
  \right)^{1/2}
  \ll_\phi Y^{1/2+\eps}
  \]
  by the Cauchy-Schwarz inequality and the Rankin-Selberg bound
  \eqref{eq:27}; similarly, using that
  $|\lambda_\chi(\mathfrak{a})| \leq \tau(\mathfrak{a})$
  for a \emph{unitary} character
  $\chi \in \mathfrak{X}(C_{\mathbb{F}}/\hat{\mathfrak{o}
  }^*)(0)$, 
  we find that
  \begin{equation}
    S_\chi(Y) \ll_{\eps}
    \frac{k^{\mathbf{1}}
      Y^{3/2+\eps}}{\log(k^{\mathbf{1}})^{2-\eps'} }
    \prod_{\norm(\mathfrak{p}) \leq k^{\mathbf{1}}}
    \left( 1 + \frac{2 \lvert \lambda_f(\mathfrak{p}) \rvert}{\norm(\mathfrak{p})} \right),
  \end{equation}
  where we emphasize that the implied constant does not depend
  upon $\chi$.
  By Proposition \ref{thm:reduction}
  and the definitions \eqref{eq:review:mf-defn}--\eqref{eq:review:rf-defn}
  of $M_f(x)$ and $R_f(x)$,
  we deduce for $\phi$ a Maass eigencuspform that
  \begin{equation}\label{eq:29}
    \frac{\mu_f(\phi)}{\mu_f(1)}
    \ll_{\phi,\eps}
    Y^{1/2+\eps} \log(k^{\mathbf{1}})^{\eps'} M_f(k^{\mathbf{1}})
  \end{equation}
  and for $\phi = E(\Psi,\cdot)$ an incomplete Eisenstein series
  that
  \begin{equation}\label{eq:28}
    \begin{split}
      \frac{\mu_f(\phi)}{\mu_f(1)} - \frac{\mu(\phi)}{\mu(1)}
      &\ll_{\phi,\eps}
      Y^{1/2+\eps}
      \log(k^{\mathbf{1}})^{\eps '} M_f(k^{\mathbf{1}})
      \int_{\mathfrak{X}(C_{\mathbb{F}}/\hat{\mathfrak{o}}^*)(0)}
      \left\lvert \frac{\Psi^\wedge(|.|^{1/2}
          \chi)}{\xi_{\mathbb{F}}(|.|^1 \chi^2)}
      \right\rvert
      \, |d \chi | \\
      &\quad 
      + \frac{1 + R_f(k^{\mathbf{1}})}{Y^{1/2}}.
    \end{split}
  \end{equation}
  The integral in \eqref{eq:28} converges by the rapid decay of
  $\Psi^\wedge$ (see \S\ref{sec:characters}).
  Choosing (as Holowinsky does) $Y = \max(1,M_f(k^{\mathbf{1}})^{-1})
  \ll \log(k^{\mathbf{1}}) ^{O (1) }$ in \eqref{eq:29}
  and \eqref{eq:28},
  we conclude the proof
  of Theorem \ref{thm:holow}.
\end{proof}

\section{Reduction to shifted sums weighted by an
  integral}\label{sec:reduct-shift-sums}
In this section we establish Proposition \ref{thm:reduction},
which reduces our study of $\mu_f(\phi)$ to that of the shifted
sums $S_\phi(Y)$ and $S_\chi(Y)$; here and throughout this
section $Y \geq 1$ is a (small) parameter, $f$ is a nondihedral
holomorphic eigencuspform of weight $k =
(k_1,\dotsc,k_{[\mathbb{F}:\mathbb{Q}]})$, $\phi$ is a Maass
eigencuspform or incomplete Eisenstein series, and $h \in
C_c^\infty(\mathbb{R}_+^*)$ is a fixed test function with Mellin
transform $h^\wedge(s) = \int_0^\infty h(y) y^{-s} \, d^\times
y$ normalized as in Definition \ref{defn-shifted-sums} so that
\begin{equation}\label{eq:h-normalization}
  h^\wedge(1) \res_{s=1} E(s,\cdot) = 1.
\end{equation}
Let $h_Y$ be the
function $y \mapsto h(Y y)$ and
let
\[
E(h_Y,\cdot) :
G(\mathbb{A}) \ni g \mapsto \sum_{\gamma \in B(\mathbb{F})
  \backslash G(\mathbb{F})} h_Y(|y(\gamma g)|)
\]
be the incomplete
Eisenstein series attached by the recipe of \S\ref{sec:incompl-eisenst-seri}
to the test function $h_Y \circ |.|
\in C_c^\infty(C_{\mathbb{F}} / C_{\mathbb{F}}^1 )
\hookrightarrow C_c^\infty(C_{\mathbb{F}} / \hat{\mathfrak{o}
}^*)$.

\begin{lemma}\label{lem:5.1}
  We have the approximate formula
  \[
  \frac{\mu_f(\phi)}{\mu_f(1)}
  = \frac{\mu_f(E(h_Y,\cdot) \phi)}{Y \mu_f(1)}
  + O_\phi(Y^{-1/2}).
  \]
\end{lemma}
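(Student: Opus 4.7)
The plan is to expand $E(h_Y,\cdot)$ by Mellin inversion on $C_{\mathbb{F}}/\hat{\mathfrak{o}}^*$, extract the pole at $s=1$ using the normalization \eqref{eq:h-normalization} as main term, and absorb the remainder on the critical line into the permitted error $O_\phi(Y^{-1/2})$.

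First I would note that the test function $h_Y\circ|.|_{\mathbb{A}}$ depends only on the adelic absolute value, hence factors through the quotient $C_{\mathbb{F}}/C_{\mathbb{F}}^1$; its Fourier--Mellin transform is therefore supported on characters of the form $|.|^s$ (other unramified characters, in particular the quadratic ones, are nontrivial on $C_{\mathbb{F}}^1$ and make no contribution), with value $(h_Y\circ|.|)^\wedge(|.|^s)=Y^s h^\wedge(s)$ by the change of variables $u=Yt$. The expansion of incomplete Eisenstein series from \S\ref{sec:incompl-eisenst-seri} accordingly collapses to the single contour integral
\[
E(h_Y,g)=\int_{(2)} Y^s h^\wedge(s)\, E(|.|^s,g)\,\frac{ds}{2\pi i}.
\]

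Next I would shift the contour to $\Re(s)=1/2$, crossing only the simple pole of $s\mapsto E(|.|^s,\cdot)$ at $s=1$. By \eqref{eq:h-normalization} the residue contributes exactly $Y$, giving
\[
E(h_Y,g)=Y+\int_{(1/2)} Y^s h^\wedge(s)\, E(|.|^s,g)\,\frac{ds}{2\pi i}.
\]
Multiplying by $\phi$, integrating against $|f|^2\,d\mu$, and dividing by $Y\mu_f(1)$ yields
\[
\frac{\mu_f(E(h_Y,\cdot)\phi)}{Y\mu_f(1)}-\frac{\mu_f(\phi)}{\mu_f(1)}
=\frac{1}{Y}\int_{(1/2)} Y^s h^\wedge(s)\,\frac{\mu_f(E(|.|^s,\cdot)\phi)}{\mu_f(1)}\,\frac{ds}{2\pi i}.
\]
Since $|Y^s|=Y^{1/2}$ on the critical line, the right side is $Y^{-1/2}$ times a contour integral that must be bounded uniformly in $f$.

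Finally I would bound $\mu_f(E(|.|^s,\cdot)\phi)/\mu_f(1)$ for $\Re(s)=1/2$ by $O_\phi((1+|s|)^{O(1)})$, uniformly in $f$; the rapid decay of $h^\wedge$ then produces the desired $O_\phi(Y^{-1/2})$. When $\phi$ is a Maass cusp form, the product $\phi E(|.|^s,\cdot)$ is uniformly bounded (with polynomial $|s|$-dependence) because the cuspidal decay of $\phi$ dominates the explicit Whittaker-type polynomial growth of $E(|.|^s,\cdot)$, whence $|\mu_f(\phi E(|.|^s,\cdot))|\leq \|\phi E(|.|^s,\cdot)\|_\infty\,\mu_f(1)$. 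When $\phi$ is an incomplete Eisenstein series the product has only moderate growth, but the rapid cuspidal decay of $|f|^2$ kills this, and the unfolding of \S\ref{sec:masses} applied to $E(|.|^s,\cdot)$ makes the polynomial $|s|$-dependence explicit. The main (mild) obstacle is exactly this uniformity in $f$ of the triple-product bound; it is soft, requiring only standard moderate-growth estimates for the Eisenstein series together with Stirling-type control of its archimedean Whittaker factor, and no $L$-function subconvexity. With this in hand the contour shift is fully justified by absolute convergence, and Lemma \ref{lem:5.1} follows.
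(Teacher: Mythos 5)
Your proposal is correct and follows essentially the same route as the paper: Mellin inversion of $h_Y$ over $|.|^s$, a contour shift to $\Re(s)=1/2$ extracting the residue $Y$ via the normalization \eqref{eq:h-normalization}, and a polynomial-in-$|s|$ sup-norm bound for $E(|.|^s,\cdot)\phi$ (uniform in $f$) combined with $|Y^s|=Y^{1/2}$ and the rapid decay of $h^\wedge$. The paper simply spells out the crude bound $E(s,g)\phi(g)\ll_\phi |s|^{2[\mathbb{F}:\mathbb{Q}]+\eps}$ that you assert as "standard moderate growth," using Siegel sets, the Fourier expansion of $E(s,\cdot)$, a lower bound for $\xi_{\mathbb{F}}(1+2it)$, and Bessel-function decay.
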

\begin{proof}
  The starting point is the consequence
  \begin{equation}\label{eq:starting-pt}
    \mu_f(E(h_Y,\cdot) \phi) = Y \mu_f(\phi) + \int_{(1/2)}
    h_Y^\wedge(s) \mu_f(E(s,\cdot) \phi) \, \frac{d s}{2 \pi i},
  \end{equation}
  of Mellin inversion, Cauchy's
  theorem and our normalization (\ref{eq:h-normalization}).
  We need a crude bound of the form
  \begin{equation}\label{eq:easy-bound-for-eis}
    E(s,g) \phi(g) \ll_\phi
    |s|^{2[\mathbb{F}:\mathbb{Q}]+\eps}
    \quad \text{ for $\Re(s) = \onehalf, g \in G(\mathbb{A})$,}
  \end{equation}
  where the precise exponent is not important.  To establish
  this, recall first that if $c > 0$ is chosen small enough,
  then the Siegel set $\mathfrak{S}$ consisting of those $g =
  n(x) a(y) k z \in G(\mathbb{A})$ for which $|y| \geq c$
  satisfies $G(\mathbb{A}) = G(\mathbb{F}) \mathfrak{S}$.  Since
  $E(s,\cdot) \phi$ is $Z(\mathbb{A})$-invariant and right
  $K$-invariant, it suffices to establish
  (\ref{eq:easy-bound-for-eis}) for $g = n(x) a(y \times
  z_j^{-1})$ where $x \in \mathbb{A}$, $y \in
  \mathbb{F}_{\infty+}^*$ with $y^{\mathbf{1}} \geq c$ and $j
  \in \{1, \dotsc, h(\mathbb{F})\}$.  For $s = \onehalf + it$ the
  Fourier expansion of $E(s,\cdot)$, given in 
  \S\ref{sec:eisenstein-series},
  shows
  that
  \begin{equation}
    |E(s,n(x) a(y \times z_j^{-1})))| \ll
    (y^{\mathbf{1}})^{1/2}
    + \sum_{n \in \mathbb{F}^* \cap \mathfrak{z}_j}
    \left\lvert
      \frac{
        \kappa_{i t,\infty}( n y)
      }
      {
        \xi_{\mathbb{F}}(1 + 2 i t)
      }
      \frac{\lambda_{i t}(\mathfrak{z}_j^{-1} n)
      }{
        \norm(\mathfrak{z}_j^{-1} n)^{1/2}
      }
    \right\rvert,
  \end{equation}
  where for simplicity we write $\kappa_{i t,\infty} :=
  \kappa_{|.|^{it},\infty}$ and $\lambda_{it} :=
  \lambda_{|.|^{it}}$.  The straightforward analysis of
  \cite[\S3.6]{MR882550} applied to $\zeta_\mathbb{F}$ in
  place of $\zeta_\mathbb{Q}$ shows that\footnote{We believe
    that the stronger bound with $(1+|t|)^\eps$ replaced by
    $\log(1+|t|)$ holds, but could not quickly locate a
    reference.}
  \[
  \xi_\mathbb{F}(1 + 2 i t)^{-1} \ll
  \frac{
    (1+|t|)^\eps
  }
  {
    \Gamma_\mathbb{R}(1+2it)^{[\mathbb{F}:\mathbb{Q}]}
  },
  \]
  and it is noted in \cite[page 6]{holowinsky-2008}
  that the integral formula for $K_{i t}$ implies
  \[
  \frac{K_{i t}(y)}{\Gamma_\mathbb{R}(1+2it)}
  \ll
  \left( \frac{1 + |t|}{y} \right)^A
  \left( 1 + \frac{1 + |t|}{y} \right)^\eps
  \quad \text{ for any } A \in \mathbb{Z}_{\geq 0},
  \, \eps>0,
  \]
  thus (writing $d = [\mathbb{F}:\mathbb{Q}]$,
  $\eps ' = (d+1) \eps$,
  and using that $|n^{\mathbf{1}}| y^{\mathbf{1}} \gg 1$)
  \[
  \left\lvert
    \frac{
      \kappa_{i t,\infty}( n y)
    }
    {
      \xi_{\mathbb{F}}(1 + 2 i t)
    }
    \frac{\lambda_{i t}(\mathfrak{z}_j^{-1} n)
    }{
      \norm(\mathfrak{z}_j^{-1} n)^{1/2}
    }
  \right\rvert
  \ll (y^{\mathbf{1}})^{1/2}
  (1+|t|)^{2 d + \eps'}
  \frac{|n^{\mathbf{1}}|^\eps }{(\mathbf{max}(\mathbf{1},|n| y)^{\mathbf{1}})^{A}}.
  \]
  Take $A = 2$.
  We have
  \begin{equation}\label{eq:convergent-sum-of-n}
    \sum_{n \in \mathbb{F}^* \cap \mathfrak{z}_j}
    \frac{|n^{\mathbf{1}}|^{\eps}}{(\mathbf{max}(\mathbf{1},|n|
      y)^{\mathbf{1}})^{2}}
    \ll (y^{\mathbf{1}})^{-2}
  \end{equation}
  because the LHS of (\ref{eq:convergent-sum-of-n})
  is invariant under multiplying $y$
  by an element of $\mathfrak{o}_+^*$, so we may
  assume that $y$ is balanced ($y_i \asymp y_j$
  for all $i,j$) with each component bounded uniformly
  from below, in which case (\ref{eq:convergent-sum-of-n})
  may be compared with a convergent integral.
  Thus $|E(s,n(x) a(y \times z_j^{-1}))|
  \ll (y^{\mathbf{1}})^{1/2}
  + |s|^{2 d + \eps '} (y^{\mathbf{1}})^{-3/2}$.
  Since $\phi$ satisfies\footnote{For a Maass eigencuspform,
    this is well known \cite[Prop
    10.7]{MR0401654}; an incomplete
    Eisenstein
    series vanishes off a compact subset of $\mathbf{X}$.}
  $\phi(n(x) a(y \times z_j^{-1})) \ll_\phi (y^{\mathbf{1}})^{-A}$,
  we obtain the crude bound (\ref{eq:easy-bound-for-eis}).
  
  By the rapid decay of $h^\wedge$
  and the identity $h_Y^\wedge(s) = Y^s h^\wedge(s)$, we deduce
  from (\ref{eq:easy-bound-for-eis})
  that the error term in (\ref{eq:starting-pt})
  satisfies
  \[
  \int_{(1/2)}
  h_Y^\wedge(s) \mu_f(E(s,\cdot) \phi) \, \frac{d s}{2 \pi i}
  \ll Y^{1/2} \mu_f(1).
  \]
  The lemma follows upon dividing through by $Y \mu_f(1)$.
\end{proof}

Fix now a nice fundamental domain
$[\mathbb{F}_{\infty+}^* / \mathfrak{o}_+^*]$
for the quotient
$\mathbb{F}_{\infty+}^* / \mathfrak{o}_+^*$
with the property that 
$y \in [\mathbb{F}_{\infty+}^* / \mathfrak{o}_+^*]$
implies $y_i \asymp y_j$ for all $i,j \in \{1, \dotsc,
[\mathbb{F}:\mathbb{Q}]\}$.
Write the Fourier expansions of $\phi$ and $f$
in the form
\begin{equation}
  \phi = \sum_{l \in \mathbb{F}} \phi_l,
  \quad f = \sum_{n \in \mathbb{F}^*} f_n,
\end{equation}
where $\phi_l : G(\mathbb{A}) \rightarrow \mathbb{C}$ satisfies
$\phi_l(n(x) g) = e_{\mathbb{F}}(l x) \phi_l(g)$ for all $x \in
\mathbb{A}$ and $f_n$ satisfies the analogous condition.
\begin{lemma}\label{lem:unfold}
  We have $\mu_f(E(h_Y,\cdot) \phi)
  = \mathcal{S}_0 + \mathcal{S}_1 + \mathcal{S}_2$,
  where
  \begin{equation}\label{eq:s0-defn}
    \mathcal{S}_0
    =
    \sum_{j=1}^{h(\mathbb{F})}
    \int_{y \in [\mathbb{F}_{\infty+}^*/\mathfrak{o}_+^*]}
    \frac{h_Y(y^{\mathbf{1}}
      \norm(\mathfrak{z}_j))}{\norm(\mathfrak{z}_j)}
    \int_{x \in \mathbb{F} \backslash \mathbb{A}}
    (\phi_0 |f|^2)(n(x) a(y \times z_j^{-1})) \, d x \,
    \frac{d^\times y}{y^{\mathbf{1}}};
  \end{equation}
  for $\phi$ a Maass eigencuspform,
  \[
  \mathcal{S}_1
  =
  \frac{\vecgamma(k-\mathbf{1})}{(4 \pi
    \mathbf{1})^{k-\mathbf{1}}}
  S_\phi(Y);
  \]
  for $\phi = E(\Psi,\cdot)$ an incomplete Eisenstein series,
  \[
  \mathcal{S}_1
  =
  \frac{\vecgamma(k-\mathbf{1})}{(4 \pi
    \mathbf{1})^{k-\mathbf{1}}}
  \int_{\mathfrak{X}(C_{\mathbb{F}}/\hat{\mathfrak{o} }^*)(0)}
  \frac{\Psi^\wedge(|.|^{1/2} \chi )}{
    \xi_{\mathbb{F}}(|.|
    \chi^2)
    \chi(d_{\mathbb{F}})^{-2}
  }
  S_\chi(Y)
  \, \frac{d \chi }{ 2 \pi i };
  \]
  and
  \begin{equation}\label{eq:s2-initial-bound}
    |\mathcal{S}_2|
    \leq \mu_f(E(h_Y,\cdot))
    \sum_{j=1}^{h(\mathbb{F})}
    \sup _{
      \substack{
        y \in [\mathbb{F}_{\infty+}^* / \mathfrak{o}_+^*] \\
        h_Y(y^{\mathbf{1}} N(\mathfrak{z}_j)) \neq 0
      }
    }
    \sum _{
      \substack{
        l \in \mathfrak{z} _j \\
        |l ^{\mathbf{1} }| \geq Y ^{1 + \eps }
      }
    }
    |\phi_l(a(y \times z_j^{-1}))|.
  \end{equation}
  The shifted sums $S_\phi(Y)$ and $S_\chi(Y)$
  are as in Definition \ref{defn-shifted-sums}.
\end{lemma}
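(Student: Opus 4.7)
The plan is to unfold the incomplete Eisenstein series and then Fourier-expand in the unipotent coordinate. Using the definition of $E(h_Y,\cdot)$ as a sum over $B(\mathbb{F}) \backslash G(\mathbb{F})$ together with the right-$K$-invariance of $|f|^2$, $\phi$, and $h_Y \circ |.|$, the standard unfolding and measure normalization \eqref{eq:31} yield
\begin{equation*}
\mu_f(E(h_Y,\cdot)\phi) = \int_{\mathbb{F} \backslash \mathbb{A}} \int_{\mathbb{F}^* \backslash \mathbb{A}^*} h_Y(|y|_\mathbb{A}) \phi(n(x) a(y)) |f|^2(n(x) a(y)) \, dx \, \frac{d^\times y}{|y|_\mathbb{A}}.
\end{equation*}
Writing the Fourier expansion $\phi = \phi_0 + \sum_{l \neq 0} \phi_l$ and parameterizing $y = y_\infty \times z_j^{-1}$ via \eqref{eq:100} (so that $|y|_\mathbb{A} = y^{\mathbf{1}} \norm(\mathfrak{z}_j)$), the $l = 0$ piece reproduces $\mathcal{S}_0$ by inspection.

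For $l \neq 0$, I would evaluate the inner $x$-integral by substituting $|f|^2(n(x)g) = \sum_{m,n \in \mathbb{F}^*} f_m(g) \overline{f_n(g)} e_\mathbb{F}((m-n) x)$ against $\phi_l(n(x) a(y)) = \phi_l(a(y)) e_\mathbb{F}(lx)$, which extracts the diagonal $n = m + l$ and produces
\begin{equation*}
\sum_{l \neq 0} \int h_Y(|y|_\mathbb{A}) \phi_l(a(y)) \sum_{n \in \mathbb{F}^*} f_n(a(y)) \overline{f_{n+l}(a(y))} \, \frac{d^\times y}{|y|_\mathbb{A}}.
\end{equation*}
Inserting the factorizable Fourier expansions of $f$ and $\phi$ (or, for $\phi = E(\Psi, \cdot)$, the Mellin contour representation \eqref{eq:phi0-useful-formula-1}) separates arithmetic Hecke factors from an archimedean integral. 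Since the integrand is invariant under the simultaneous action $(l, n, y_\infty) \mapsto (\eta l, \eta n, \eta^{-1} y_\infty)$ of $\mathfrak{o}_+^*$, I would unfold $y_\infty$ from $[\mathbb{F}_{\infty+}^*/\mathfrak{o}_+^*]$ to all of $\mathbb{F}_{\infty+}^*$ at the cost of quotienting the $l$-sum to $\mathfrak{o}_+^* \backslash \mathfrak{z}_j$. Matching with \eqref{eq:10} identifies the resulting archimedean integral as $\frac{\vecgamma(k-\mathbf{1})}{(4\pi\mathbf{1})^{k-\mathbf{1}}} I_{\chi_\infty}(l, n, \norm(\mathfrak{z}_j) Y)$, and truncating at $|l^{\mathbf{1}}| < Y^{1+\eps}$ assembles the claimed formula for $\mathcal{S}_1$ (in the Eisenstein case, the contour integral over $\chi$ simply pulls outside the shifted sum and the Hecke factors).

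The tail contribution $\mathcal{S}_2$ (with $|l^{\mathbf{1}}| \geq Y^{1+\eps}$) I would bound by Cauchy--Schwarz and Parseval:
\begin{equation*}
\left| \sum_n f_n(a(y)) \overline{f_{n+l}(a(y))} \right| \leq \sum_n |f_n(a(y))|^2 = \int_{\mathbb{F} \backslash \mathbb{A}} |f|^2(n(x) a(y)) \, dx.
\end{equation*}
Pulling the supremum of $\sum_{|l^{\mathbf{1}}| \geq Y^{1+\eps}} |\phi_l(a(y))|$ over the support of $h_Y$ outside the remaining $y$-integral (split across narrow class representatives) and recognizing the leftover double integral as $\mu_f(E(h_Y, \cdot))$ produces the asserted bound \eqref{eq:s2-initial-bound}. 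The main obstacle is purely bookkeeping: one must carefully reconcile the narrow-class partition with its $\norm(\mathfrak{z}_j)$ normalization, the $\mathfrak{o}_+^*$-unfolding of the $y_\infty$-integration, and the prefactor $\vecgamma(k-\mathbf{1})/(4\pi \mathbf{1})^{k-\mathbf{1}}$ coming from \eqref{eq:10}; once the notation is set up correctly, the Parseval estimate does the analytic work for $\mathcal{S}_2$ and the shifted-sum interpretation of $\mathcal{S}_1$ follows directly from Definition \ref{defn-shifted-sums}.
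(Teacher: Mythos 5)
Your proposal follows essentially the same route as the paper: unfold via the measure formula \eqref{eq:31}, Fourier-expand $\phi$, peel off the constant term for $\mathcal{S}_0$ and the large-$l$ tail for $\mathcal{S}_2$, then expand $|f|^2$, integrate in $x$, and exploit the diagonal $\mathfrak{o}_+^*$-invariance to pass to orbit representatives and lift the $y$-integral from $[\mathbb{F}_{\infty+}^*/\mathfrak{o}_+^*]$ to $\mathbb{F}_{\infty+}^*$, recovering $S_\phi(Y)$ (resp.\ the $\chi$-contour of $S_\chi(Y)$) from Definition \ref{defn-shifted-sums}. The only cosmetic difference is in $\mathcal{S}_2$: you open $|f|^2$ and then invoke Cauchy--Schwarz and Parseval to re-fold the $n$-sum back into $\int_x|f|^2\,dx$, whereas the paper never expands $|f|^2$ there and simply estimates $\bigl|\sum_{|l^{\mathbf{1}}|\geq Y^{1+\eps}}\phi_l\bigr|$ pointwise in $x$ by $\sum_l|\phi_l(a(\cdot))|$ before recognizing $\mu_f(E(h_Y,\cdot))$ --- the two derivations produce the same bound \eqref{eq:s2-initial-bound}.
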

\begin{proof}
  By the formula \eqref{eq:31}
  for integration over $Z(\mathbb{A}) B(\mathbb{F}) \backslash
  G(\mathbb{A})$,
  we see that
  \begin{equation}\label{eq:42}
    \begin{split}
      &\mu_f(E(h_Y,\cdot) \phi) \\
      &\quad =
      \sum_{j=1}^{h(\mathbb{F})}
      \int_{y \in \mathbb{F}_{\infty+}^*/\mathfrak{o}_+^*}
      \frac{h_Y(y^{\mathbf{1}}
        \norm(\mathfrak{z}_j))}{\norm(\mathfrak{z}_j)}
      \int_{x \in \mathbb{F} \backslash \mathbb{A}}
      (\phi |f|^2)(n(x) a(y \times z_j^{-1})) \, d x \,
      \frac{d^\times y}{y^{\mathbf{1}}}.
    \end{split}
  \end{equation}
  We now integrate
  in $y$ over the fundamental domain
  $[\mathbb{F}_{\infty+}^* / \mathfrak{o}_+^*]$
  and substitute for $\phi$ its Fourier series
  $\sum \phi_l$.
  Note that $\phi_l(n(x) a(y \times z_j^{-1}))
  = 0$ unless $l \in \mathfrak{z}_j$.
  The contribution to (\ref{eq:42})
  of the constant term $\phi_0$ is precisely $\mathcal{S}_0$.
  Let $\mathcal{S}_2$ denote the contribution of those
  $\phi_l$ for which $|l^{\mathbf{1}}| \geq Y^{1+\eps}$,
  so that the bound (\ref{eq:s2-initial-bound})
  follows from the formula for $\mu_f(E(h_Y,\cdot))$
  given by (\ref{eq:42}) with $\phi = 1$.
  Let $\mathcal{S}_1$ denote the remaining
  contribution of those
  $l \in \mathfrak{z}_j$ for which
  $0 \neq |l^{\mathbf{1}}| < Y^{1+\eps}$.
  Substituting the Fourier series $f = \sum f_n$
  (in which $f_n(y \times z_j^{-1}) = 0$
  unless $n \in \mathfrak{z}_j \cap \mathbb{F}_{\infty+}^*$)
  and integrating in $x$, we obtain
  \begin{equation}\label{eq:36}
    \mathcal{S}_1
    =
    \sum_{j=1}^{h(\mathbb{F})}
    \mathop{\sum \sum}_{
      \substack{
        (l,n) \in (\mathbb{F}^* \cap
        \mathfrak{z}_j)^2 \\
        l^{\mathbf{1}} < Y^{1+\eps} \\
        n \in \mathbb{F}_{\infty+}^* \\
        m := n+l \in \mathbb{F}_{\infty+}^* \\
      }
    }
    \int_{y \in [\mathbb{F}_{\infty+}^*/\mathfrak{o}_+^*]}
    \frac{
      h_Y(y^{\mathbf{1}}
      \norm(\mathfrak{z}_j))
    }
    {
      \norm(\mathfrak{z}_j)
    }
    (\phi_l \overline{f_m} f_n)(a(y \times z_j^{-1}))
    \, \frac{d^\times y}{y^{\mathbf{1}}}.
  \end{equation}
  If $\eta \in \mathfrak{o}_+^*$,
  then $(\phi_{\eta l} \overline{f_{\eta m}} f_{\eta n})(a(y
  \times z_j^{-1}))
  = (\phi_{ l} \overline{f_{ m}} f_{ n})(a(\eta y
  \times z_j^{-1}))$ (see \S\ref{sec:fourier-expansions}),
  so we may break the sum into orbits
  for $(l,n)$ under the diagonal action of $\mathfrak{o}_+^*$
  and unfold the integral over $y$ to
  all of $\mathbb{F}_{\infty+}^*$:
  \begin{equation}\label{eq:37}
    \mathcal{S}_1
    =
    \sum_{j=1}^{h(\mathbb{F})}
    \mathop{\sum \sum}_{
      \substack{
        (l,n) \in \mathfrak{o}_+^* \backslash (\mathbb{F}^* \cap
        \mathfrak{z}_j)^2 \\
        l^{\mathbf{1}} < Y^{1+\eps} \\
        n \in \mathbb{F}_{\infty+}^* \\
        m := n+l \in \mathbb{F}_{\infty+}^* \\
      }
    }
    \int_{y \in \mathbb{F}_{\infty+}^*}
    \frac{
      h_Y(y^{\mathbf{1}}
      \norm(\mathfrak{z}_j))
    }
    {
      \norm(\mathfrak{z}_j)
    }
    (\phi_l \overline{f_m} f_n)(a(y \times z_j^{-1}))
    \, \frac{d^\times y}{y^{\mathbf{1}}}.
  \end{equation}
  Take as representatives
  for $\mathfrak{o}_+^* \backslash (\mathbb{F}^* \cap
  \mathfrak{z}_j)^2$
  the pairs $(l,n)$ with $l$ traversing
  any set of representatives for
  $\mathfrak{o}_+^* \backslash (\mathbb{F}^* \cap
  \mathfrak{z}_j)$
  and $n$ traversing
  the set $\mathbb{F}^* \cap \mathfrak{z}_j$.
  Recalling the formulas for $f_n$ and $\phi_l$
  given in \S\ref{sec:holom-eigenc},
  \S\ref{sec:maass-eigencuspforms}
  and \S\ref{sec:incompl-eisenst-seri}  
  and the definitions of $S_\phi(Y)$ and $S_\chi(Y)$,
  we obtain the claimed expressions
  for $\mathcal{S}_1$.
\end{proof}

\begin{lemma}
  We have
  \[
  \frac{\mathcal{S}_0}{Y \mu_f(1)}
  = \frac{\mu(\phi)}{\mu(1)}
  + O_\phi
  \left( \frac{1 + \delta_\phi R_f(k^{\mathbf{1}})}{Y^{1/2}} \right),
  \]
  where $\delta_\phi = 0$ or $1$ according
  as $\phi$ is a Maass eigencuspform
  and or an incomplete Eisenstein series.
\end{lemma}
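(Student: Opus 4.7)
\emph{Plan.} The first step is to undo the adelic orbit decomposition used in \eqref{eq:s0-defn} and recombine the integral over $j$ into a single one over $\mathbb{F}^* \backslash \mathbb{A}^*$,
\[
\mathcal{S}_0 = \int_{\mathbb{F}^* \backslash \mathbb{A}^*} h_Y(|y|_{\mathbb{A}}) \, \phi_0(y) \, I_f(y) \, \frac{d^\times y}{|y|_{\mathbb{A}}}, \qquad I_f(y) := \int_{\mathbb{F} \backslash \mathbb{A}} |f(n(x) a(y))|^2 \, dx,
\]
using the measure normalization \eqref{eq:31}, the right $K$-invariance of $|f|^2$, and the $\hat{\mathfrak{o}}^*$-invariance of $\phi_0$. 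If $\phi$ is a Maass eigencuspform, both sides of the claimed identity vanish trivially: $\phi_0 \equiv 0$ forces $\mathcal{S}_0 = 0$, while $\mu(\phi) = \langle \phi, 1\rangle = 0$ by cuspidality, matching $\delta_\phi = 0$. So I henceforth assume $\phi = E(\Psi, \cdot)$ is an incomplete Eisenstein series.

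Next I would substitute the constant-term formula \eqref{eq:phi0-useful-formula-0} for $\phi_0$, splitting $\mathcal{S}_0 = \mathcal{S}_{0,0} + \mathcal{S}_{0,1} + \mathcal{S}_{0,2}$ according to the three summands $\mu(\phi)/\mu(1)$, $\sum_{1 \neq \chi_0} c_\Psi(\chi_0) \chi_0(y)$, and the remainder $O_\phi(|y|_{\mathbb{A}}^{1/2})$. For the main piece $\mathcal{S}_{0,0} = (\mu(\phi)/\mu(1))\, \mu_f(E(h_Y, \cdot))$ I would apply Mellin inversion on $C_\mathbb{F}/\hat{\mathfrak{o}}^*$ to write
\[
\mu_f(E(h_Y, \cdot)) = \int_{(c)} h_Y^\wedge(s)\, \mu_f(E(|.|^s, \cdot))\, \frac{ds}{2\pi i}
\]
and shift the contour from $c$ large down to $\Re(s) = \onehalf$. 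The residue at $s = 1$ contributes exactly $Y \mu_f(1)$ by our normalization \eqref{eq:h-normalization}, while the remaining integral is controlled by the Rankin--Selberg identity $\mu_f(E(|.|^s, \cdot)) = \Lambda(\ad f, s)\, \xi_\mathbb{F}(s)/\xi_\mathbb{F}(2s)\, \prod_j 2^{-k_j - 1}$ together with \eqref{eq:26}, Stirling on the archimedean gamma factors, and the rapid decay of $h^\wedge$ (which absorbs the $C(|.|^s)^{10}$ denominator present in the definition of $R_f$); this produces an error of size $Y^{1/2} R_f(k^{\mathbf{1}}) \mu_f(1)$.

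For $\mathcal{S}_{0,1}$ I would observe that the Mellin transform of $y \mapsto h_Y(|y|_\mathbb{A}) \chi_0(y)$ on $C_\mathbb{F}/\hat{\mathfrak{o}}^*$ is supported on the vertical coset $\chi_0 |.|^s$, so the analysis reduces to a contour integral of $h_Y^\wedge(s)\, \mu_f(E(\chi_0 |.|^s, \cdot))$. The crux of the whole lemma --- and the step I expect to be the main obstacle --- is the treatment of the Eisenstein pole at $s = 1$: its residue is proportional to $\int_{\mathbf{X}} |f|^2 \chi_0(\det g) \, d\mu$, and one must argue that this period \emph{vanishes} for every $\chi_0 \neq 1$. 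Writing it as $\langle f \otimes (\chi_0 \circ \det), f\rangle$, the vanishing is equivalent to $\pi(f) \not\cong \pi(f) \otimes \chi_0$, which is exactly the nondihedrality hypothesis on $f$ entering Proposition~\ref{thm:reduction}. Granting this vanishing, the residual $\Re(s) = \onehalf$ integral is bounded by the same $R_f$-mechanism. Finally, since $|y|_\mathbb{A} \asymp Y^{-1}$ on the support of $h_Y$, the crude estimate $|\mathcal{S}_{0,2}| \ll_\phi Y^{-1/2} \mu_f(E(h_Y, \cdot)) \ll_\phi Y^{1/2} \mu_f(1)$ contributes $O_\phi(Y^{-1/2})$ after dividing by $Y \mu_f(1)$. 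Assembling the three estimates yields the stated bound.
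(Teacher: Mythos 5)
Your proposal is correct and follows essentially the same route as the paper: substitute the constant-term expansion \eqref{eq:phi0-useful-formula-0} into $\mathcal{S}_0$, express each resulting piece as a contour integral of $h_Y^\wedge(s)\mu_f(E(\chi_0|.|^s,\cdot))$, shift to $\Re(s)=\onehalf$ picking up $Y\mu_f(1)$ only from $\chi_0=1$ (the $\chi_0\neq 1$ residues vanishing by nondihedrality), and bound the shifted integrals by the $R_f$ mechanism. Your identification of the $\chi_0\neq 1$ residue with the twisted period $\int_{\mathbf{X}}|f|^2(\chi_0\circ\det)$ is the same fact the paper phrases via the entireness of $\Lambda(\ad f,\cdot)$ after Rankin--Selberg unfolding.
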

\begin{proof}
  If $\phi$ is cuspidal, then
  $\mathcal{S}_0 = \mu(\phi) = 0$,
  so there is nothing to show.
  Suppose that $\phi = E(\Psi,\cdot)$.
  If $y^{\mathbf{1}} \asymp Y^{-1}$, then it follows from
  \eqref{eq:phi0-useful-formula-0}
  that
  \begin{equation}\label{eq:phi0-useful-formula}
    \phi_0(y \times z_j^{-1})
    = \frac{\mu(\phi)}{\mu(1)}
    + \sum_{1 \neq \chi_0 \in
      \mathfrak{X}(C_{\mathbb{F}}/\hat{\mathfrak{o}}^*)[2]}
    c_{\Psi}(\chi_0)
    \chi_0(y \times z_j^{-1})
    + O_{\phi}(Y^{-1/2}).
  \end{equation}
  We have
  \begin{equation}\label{eq:integral-ehy-1}
    \begin{split}
      &\sum_{j=1}^{h(\mathbb{F})}
      \int_{y \in [\mathbb{F}_{\infty+}^*/\mathfrak{o}_+^*]}
      \frac{h_Y(y^{\mathbf{1}}
        \norm(\mathfrak{z}_j))}{\norm(\mathfrak{z}_j)}
      \int_{x \in \mathbb{F} \backslash \mathbb{A}}
      |f|^2(n(x) a(y \times z_j^{-1})) \, d x \,
      \frac{d^\times y}{y^{\mathbf{1}}} \\
      &\quad =
      \mu_f(E(h_Y,\cdot))
      = \int_{(2)} h_Y^\wedge(s) \mu_f(E(s,\cdot)) \, \frac{d s}{2 \pi i},
    \end{split}
  \end{equation}
  and similarly for $1 \neq \chi_0  \in
  \mathfrak{X}(C_{\mathbb{F}}/\hat{\mathfrak{o}}^*)[2]$,
  \begin{equation}\label{eq:integral-ehy-2}
    \begin{split}
      &\sum_{j=1}^{h(\mathbb{F})}
      \int_{y \in [\mathbb{F}_{\infty+}^*/\mathfrak{o}_+^*]}
      \frac{h_Y(y^{\mathbf{1}}
        \norm(\mathfrak{z}_j))}{\norm(\mathfrak{z}_j)}
      \chi_0(y \times z_j^{-1})
      \int_{x \in \mathbb{F} \backslash \mathbb{A}}
      |f|^2(n(x) a(y \times z_j^{-1})) \, d x \,
      \frac{d^\times y}{y^{\mathbf{1}}} \\
      &\quad
      = \int_{(2)} h_Y^\wedge(s) \mu_f(E(|.|^s \chi_0,\cdot)) \,
      \frac{d s}{2 \pi i}.
    \end{split}
  \end{equation}
  Substituting
  (\ref{eq:phi0-useful-formula}) into (\ref{eq:s0-defn})
  and applying \eqref{eq:integral-ehy-1} and \eqref{eq:integral-ehy-2},
  we obtain
  \begin{equation}\label{eq:s0-contour-integral}
    \begin{split}
      \mathcal{S}_0
      &= \left( \frac{\mu(\phi)}{\mu(1)}
        + O_\phi(Y^{-1/2}) \right)
      \int_{(2)} h_Y^\wedge(s) \mu_f(E(s,\cdot)) \, \frac{d s}{2
        \pi i} \\
      &\quad + \sum_{1 \neq \chi_0  \in
        \mathfrak{X}(C_{\mathbb{F}}/\hat{\mathfrak{o}}^*)[2]}
      c_\Psi(\chi_0)
      \int_{(2)} h_Y^\wedge(s) \mu_f(E(|.|^s \chi_0,\cdot)) \,
      \frac{d s}{2 \pi i}.
    \end{split}
  \end{equation}
  Shift the contours in (\ref{eq:s0-contour-integral}) to the
  line $\Re(s) = \onehalf$; for $\chi_0 \neq 1$ we do not pick up
  a pole of $\mu_f(E(|.|^s \chi_0, \cdot))$
  because $f$ is nondihedral.
  Thus
  \begin{equation}\label{eq:s0-with-unsimplified-error}
    \begin{split}
      \mathcal{S}_0
      &= Y \mu_f(1) \left( \frac{\mu(\phi)}{\mu(1)}
        + O_\phi(Y^{-1/2}) \right) \\
      &\quad 
      + O_\phi 
      \left( \sum_{\chi_0 \in
          \mathfrak{X}(C_{\mathbb{F}}/\hat{\mathfrak{o}}^*)[2]}
        \int_{(1/2)}
        \left\lvert
          h _Y ^\wedge (s) \mu _f (E (\chi _0 |.|^s,\cdot))
        \right\rvert
        \, |d s|
      \right).
    \end{split}
  \end{equation}
  To simplify the error term, we apply the formula
  \begin{equation}
    \begin{split}
      &\frac{\mu_f(E(\chi_0 |.|^s,\cdot))}{\mu_f(1)}
      \\
      &\quad 
      = c_1(\mathbb{F})
      \int _{(1/2)}
      h^\wedge(s)
      \left( \frac{Y }{ 4 \pi ^{[\mathbb{F} :
            \mathbb{Q} ]}} \right) ^s 
      \frac{\vecgamma (k + (s - 1 ) \mathbf{1} )}{ \vecgamma (k )}
      \frac{\zeta_\mathbb{F} (\chi_0 |.|^s) }{\zeta _{\mathbb{F} } (2 s )}
      \frac{L (\ad f, \chi_0 |.|^s)}{ L (\ad f, 1 )}
      \, \frac{d s}{2 \pi i}
    \end{split}
  \end{equation}
  which follows from the unfolding method and analytic
  continuation as in \S\ref{sec:masses}.  By the standard estimates
  $|\Gamma(k_j - \tfrac{1}{2} + i t)| \leq \Gamma(k_j -
  \tfrac{1}{2}) \ll k_j^{-1/2} \Gamma(k_j)$,
  $\zeta_{\mathbb{F}}(\chi_0 |.|^s) \ll
  |s|^{[\mathbb{F}:\mathbb{Q}]/4}$ and $|\zeta_\mathbb{F}(2 s)|
  \gg |s|^{-\eps}$ for $\Re(s) = \onehalf$ (see also Soundararajan's
  arguments \cite[p7]{MR2680497} when $\mathbb{F} =
  \mathbb{Q}$), we deduce that the error term in
  (\ref{eq:s0-with-unsimplified-error}) satisfies
  \begin{equation}
    \sum_{\chi_0 \in
      \mathfrak{X}(C_{\mathbb{F}}/\hat{\mathfrak{o}}^*)[2]}
    \int_{(1/2)}
    \left\lvert
      h _Y ^\wedge (s) \mu _f (E (\chi _0 |.|^s,\cdot))
    \right\rvert
    \, |d s|
    \ll Y^{1/2} \mu_f(1)
    R_f(k^{\mathbf{1}}),
  \end{equation}
  with $R_f$ given by \eqref{eq:review:rf-defn}.
  The lemma follows upon dividing
  through by $Y \mu_f(1)$.
\end{proof}

\begin{lemma}\label{lem:bound-s2}
  We have
  \[
  \frac{|\mathcal{S}_2|}{Y \mu_f(1)} \ll Y^{-10}.
  \]
\end{lemma}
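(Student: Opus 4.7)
The plan is to bound the two factors in \eqref{eq:s2-initial-bound} separately. For $\mu_f(E(h_Y,\cdot))$, the contour-shift argument already used in the proof of Lemma \ref{lem:5.1} (applied with $\phi \equiv 1$) immediately yields $\mu_f(E(h_Y,\cdot)) \ll Y \mu_f(1)$ for $Y \geq 1$. It then suffices to show, uniformly over $j \in \{1, \dotsc, h(\mathbb{F})\}$ and over $y \in [\mathbb{F}_{\infty+}^*/\mathfrak{o}_+^*]$ with $h_Y(y^{\mathbf{1}} \norm(\mathfrak{z}_j)) \neq 0$ (such $y$ being balanced with $y^{\mathbf{1}} \asymp Y^{-1}$ and $y_i \asymp Y^{-1/d}$, where $d = [\mathbb{F}:\mathbb{Q}]$), that the Fourier tail
\[
\sum_{\substack{l \in \mathfrak{z}_j \\ |l^{\mathbf{1}}| \geq Y^{1+\eps}}} |\phi_l(a(y \times z_j^{-1}))|
\]
is $\ll_\phi Y^{-11}$.

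To bound this tail I will invoke the factorizable Fourier expansion of $\phi$ to write $|\phi_l(a(y \times z_j^{-1}))| \leq |\kappa_{\phi,\infty}(l y_\infty)| \cdot |\lambda_\phi((l)\mathfrak{z}_j^{-1})|/\norm((l)\mathfrak{z}_j^{-1})^{1/2}$ in the Maass case, with the analogous bound for an incomplete Eisenstein series $\phi = E(\Psi,\cdot)$ coming from \eqref{eq:phi0-useful-formula-1} after pulling absolute values inside the $\chi$-contour and using the rapid decay of $\Psi^\wedge$ from \S\ref{sec:characters}. The trivial Hecke bound contributes a harmless factor $\ll (|l^{\mathbf{1}}|/\norm(\mathfrak{z}_j))^{\eps}$, while each archimedean factor satisfies the Schwartz-type estimate $|\kappa_{\phi,\infty_i}(u)| \ll_{\phi,A}(1+|u|)^{-A}$ for any $A \geq 0$ (there is at worst an integrable singularity at $u=0$, and for Maass forms $\kappa_{\phi,\infty_i}$ is actually bounded near $0$).

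The key observation is that $|l^{\mathbf{1}}| \geq Y^{1+\eps}$ together with $y^{\mathbf{1}} \asymp Y^{-1}$ forces $\prod_i |l_i y_i| \geq Y^{\eps}$, hence $\max_i |l_i y_i| \geq Y^{\eps/d}$; for this dominant index $i_0$ the corresponding factor of $\kappa_{\phi,\infty}$ contributes $\ll_A Y^{-A\eps/d}$. The remaining components sum, after change of variables $u = l y_\infty$, against a Schwartz-type function over the lattice $y_\infty \mathfrak{z}_j \subset \mathbb{F}_\infty$ of covolume $\asymp \norm(\mathfrak{z}_j) Y^{-1}$, producing only an $O(Y)$ factor. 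Summing over the $d$ possible choices of $i_0$ and absorbing the $Y^{O(\eps)}$ losses from the divisor function yields an overall bound $\ll Y^{1 + O(\eps) - A\eps/d}$, which is $\ll Y^{-11}$ once $A$ is chosen sufficiently large in terms of $d$ and $\eps$. The main obstacle will be the lattice-counting step for unbalanced shifts $l$ (some $|l_i|$ close to $1$, others very large): the lattice covolume shrinks like $Y^{-1}$, so a naive sum of a Schwartz function would cost a full power of $Y$, and one must verify that the super-polynomial savings from the single dominant archimedean component genuinely outweigh both this loss and the polynomial divisor-function cost. Since no cancellation is exploited anywhere, the argument is entirely soft and uniform in $l$.
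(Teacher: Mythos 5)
Your treatment of the Fourier tail $\sum_{|l^{\mathbf{1}}| \geq Y^{1+\eps}} |\phi_l(a(y \times z_j^{-1}))|$ is essentially the paper's argument: isolate the archimedean component with $|l_{i_0} y_{i_0}| \geq Y^{\eps/d}$, use the super-polynomial decay of $\kappa_{\phi,\infty_{i_0}}$ there, bound the other components trivially, count lattice points dyadically in $\max_i |l_i|$, and transfer to incomplete Eisenstein series via \eqref{eq:phi0-useful-formula-1} and the rapid decay of $\Psi^\wedge$. That part is fine, and your bookkeeping $Y^{1+O(\eps)-A\eps/d}$ matches the paper's $\sum_r 2^{rd} Y^{1+\eps} (2^r Y^{\eps/d})^{-A}$.

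The gap is in your first step. The contour-shift argument of Lemma \ref{lem:5.1} does \emph{not} apply with $\phi \equiv 1$: the crude bound \eqref{eq:easy-bound-for-eis} on $E(s,g)\phi(g)$ rests on the rapid decay of $\phi$ high in the cusp (cuspidality, or compact support for an incomplete Eisenstein series), which fails for the constant function. If you shift the contour for $\mu_f(E(h_Y,\cdot))$ itself, the remainder is $\int_{(1/2)} |h_Y^\wedge(s)\, \mu_f(E(s,\cdot))|\,|ds|$, and by the unfolding computation of \S\ref{sec:masses} this is governed by $|L(\ad f, \tfrac{1}{2}+it)| / L(\ad f,1)$ on the critical line --- i.e.\ by the quantity $R_f(k^{\mathbf{1}})$ of \eqref{eq:review:rf-defn}. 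That is not $O(Y^{1/2}\mu_f(1))$ with a constant independent of $k$ unless you import weak subconvexity for $L(\ad f,\cdot)$ and the Hoffstein--Lockhart lower bound, and even then you only get $\ll Y \mu_f(1) \log(k^{\mathbf{1}})^{\eps}$, which is not the clean $k$-uniform statement the lemma asserts. The paper sidesteps all of this with a soft pointwise bound: $E(h_Y,g) \ll \#\{\gamma \in B(\mathbb{F})\backslash G(\mathbb{F}) : |y(\gamma g)| \asymp Y^{-1}\} \ll Y^{1+\eps}$ uniformly in $g$ (a lattice-point count, \cite[Lem 8.7]{venkatesh-2005}), whence $\mu_f(E(h_Y,\cdot)) \ll Y^{1+\eps}\mu_f(1)$; combined with your tail bound (run with one more power of $Y$ to spare, as the paper does with $Y^{-12}$) this gives the stated $|\mathcal{S}_2| \ll Y^{-10}\mu_f(1)$. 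You need some such substitute for your claimed ``immediate'' bound on $\mu_f(E(h_Y,\cdot))$.
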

\begin{proof}
  Set $d = [\mathbb{F}:\mathbb{Q}]$,
  and note that
  each $l$ arising in the sum
  (\ref{eq:s2-initial-bound})
  satisfies
  \begin{equation}\label{eq:l-max-dyadic}
    2^r (Y^{1+\eps})^{1/d}
    \leq \max(|l_1|,\dotsc,|l_d|)
    < 
    2^{r+1} (Y^{1+\eps})^{1/d}
  \end{equation}
  for some nonnegative integer $r$.
  More generally,
  there are $\ll 2^{r d} Y^{1+\eps}$ elements $l \in
  \mathfrak{z}_j$ for which (\ref{eq:l-max-dyadic})
  holds.
  For each $y \in [\mathbb{F}_{\infty+}^* / \mathfrak{o}_+^*]$
  such that $h_Y(y^{\mathbf{1}} N(\mathfrak{z}_j)) \neq 0$,
  we have $y^{\mathbf{1}} \asymp Y^{-1}$
  and $y_i \asymp y_j$ for $i,j \in \{1,\dotsc,
  [\mathbb{F}:\mathbb{Q}]\}$,
  thus
  \begin{equation}\label{eq:y-each-component-small}
    y_i \asymp Y^{-1/d} \quad \text{ for each $i$.}
  \end{equation}

  Suppose that  $\phi$ is
  a Maass eigencuspform, so that
  \[\phi_l(a(y \times z_j^{-1})) =
  \kappa_{\phi,\infty}(l y)
  \frac{
    \lambda_{\phi}(l z_j^{-1})
  }{
    \norm(l z_j^{-1})^{1/2}
  }.\]
  We
  have  $\lambda_\phi(\mathfrak{a}) \leq \tau(\mathfrak{a}) \norm(\mathfrak{a})^{1/2} \ll \norm(\mathfrak{a})^{1/2+\eps}$
  and
  $\kappa_{\phi,\infty}(l y) = \prod_{i=1}^d
  \kappa_{\phi,\infty_i}(l_i y_i)$ with 
  \[
  \kappa_{\phi,\infty_i}(l_i y_i)
  =
  \pm 2 (|l_i| y_i)^{1/2} K_{i r_i}(2 \pi |l_i| y_i),
  \]
  where
  $|\kappa_{\phi,\infty_i}(l_i y_i)| \leq 1$
  and
  \begin{equation}\label{eq:k-bessel-bound}
    K_{i r}(x) \ll \left( \frac{1 + |r|}{x} \right)^{A'}
    \quad \text{uniformly for $r \in \mathbb{R} \cup i(-\onehalf,\onehalf)$
      and $x \geq \delta > 0$.}
  \end{equation}
  Thus if   $l \in \mathfrak{z}_j$
  and $y \in \mathbb{F}_{\infty+}^*$ satisfy
  (\ref{eq:l-max-dyadic})--(\ref{eq:y-each-component-small}),
  we obtain
  \begin{equation}\label{eq:bound-phi-l}
    \lvert \phi _l (a (y \times z_j^{-1})) \rvert
    \ll
    (1 + |r|^{\mathbf{1}})^{O(1)}
    (2^r Y^{\eps/d})^{-A}
  \end{equation}
  for any positive $A$.  The dependence of the bound
  (\ref{eq:bound-phi-l}) on $\phi$ is polynomial in the
  archimedean parameters $r_i$, so
  (\ref{eq:bound-phi-l}) extends to the case that $\phi =
  E(\Psi,\cdot)$ is an incomplete Eisenstein series by the
  integral formula \eqref{eq:phi0-useful-formula-1} for its Fourier coefficients and the
  rapid decay of the test function $\Psi^\wedge$.

  Taking $A$ sufficiently large in (\ref{eq:bound-phi-l})
  and summing over $l \in \mathfrak{z}_j$ that satisfy
  the condition (\ref{eq:l-max-dyadic})
  for some $r \in \mathbb{Z}_{\geq 0}$, we deduce
  \begin{equation}\label{eq:22}
    |\mathcal{S}_2| \ll  Y^{-12} \mu_f(E(h_Y,\cdot)).
  \end{equation}
  The function $h$ is bounded,
  so
  \begin{equation}\label{eq:9}
    E(h_Y,g)
    = \sum_{\gamma \in B(\mathbb{F}) \backslash G(\mathbb{F})}
    h(Y |y(\gamma g)|)
    \ll \# \{\gamma \in B(\mathbb{F}) \backslash G(\mathbb{F})
    : | y(\gamma g)| \asymp Y^{-1}
    \}.
  \end{equation}
  By \cite[Lem 8.7]{venkatesh-2005}, the cardinality on the
  RHS of \eqref{eq:9} is $\ll Y^{1+\eps}$, uniformly in $g$.
  Thus $E(h_Y,\cdot) \ll Y^{1+\eps}$ and $\mu_f(E(h_Y,\cdot))
  \ll Y^{1+\eps} \mu_f(1)$,
  so \eqref{eq:22} gives
  $|\mathcal{S}_2| \ll Y^{-10} \mu_f(1)$.
\end{proof}

\begin{proof}[Proof of Proposition \ref{thm:reduction}]
  Follows immediately from the sequence of lemmas proved in this
  section together with the consequence
  \[
  \frac{1}{Y \mu_f(1)}
  \frac{\vecgamma(k-\mathbf{1})}{(4 \pi \mathbf{1})^{k-\mathbf{1}}}
  = \frac{c_1(\mathbb{F})}{L(\ad f, 1)}
  \frac{1}{(k-\mathbf{1})^{\mathbf{1}} Y}
  \]
  of the formula \eqref{eq:26}.
\end{proof}

\begin{remark}\label{rmk:compare-marshall}
  Let us point out the essential difference between our
  method and that of Marshall \cite{2010arXiv1006.3305M}.
  Recall that starting from Lemma \ref{lem:5.1}, we have
  integrated $\phi |f|^2$
  against the incomplete Eisenstein series $E(h,\cdot)$
  attached to a test function $h \in C_c^\infty(C_{\mathbb{F}} /
  C_{\mathbb{F}}^1) = C_c^\infty(\mathbb{R}_+^*)$.  Marshall
  instead integrates against what he calls a ``unipotent
  Eisenstein series,'' which (reinterpreted adelically) amounts to
  the incomplete Eisenstein series $E(H,\cdot)$ attached to
  the
  test function $H \in
  C_c^\infty(C_{\mathbb{F}}/\hat{\mathfrak{o} }^*)$ given by
  $H(y) = \sum_{\alpha \in \mathbb{F}^*} h(\alpha y)$
  for some pure tensor $h = \prod h_v \in
  C_c^\infty(\mathbb{A}^* / \hat{\mathfrak{o} }^*)$.
  Suppose that $\phi$ is cuspidal;
  the case that $\phi = E(\Psi,\cdot)$ is an incomplete
  Eisenstein series proceeds similarly after separating out the
  constant term and appealing to the formula
  \eqref{eq:phi0-useful-formula-1}.
  Then
  \begin{eqnarray*}
    \mu_f(E(H,\cdot) \phi)
    &=& \int_{Z(\mathbb{A}) B(\mathbb{F}) \backslash
      G(\mathbb{A})} H \phi |f|^2 \\
    &=& \int_{y \in \mathbb{F}^* \backslash \mathbb{A}^*}
    \left( \sum_{\alpha \in \mathbb{F}^*} h(\alpha y) \right)
    \int_{x \in \mathbb{F} \backslash \mathbb{A}} (\phi
    |f|^2)(n(x) a(y))
    \, d x \, \frac{d^\times y}{|y|} \\
    &=& \int_{y \in \mathbb{A}^*}
    h(y)
    \int_{x \in \mathbb{F} \backslash \mathbb{A}} (\phi
    |f|^2)(n(x) a(y))
    \, d x \, \frac{d^\times y}{|y|} \\
    &=&
    \mathop{\sum \sum}_{
      \substack{
        (l,n) \in \mathbb{F}^* \times \mathbb{F}^* \\
        m := n + l \in \mathbb{F}^*
      }
    }
    \int_{y \in \mathbb{A}^*} h(y) \kappa_\phi(l y) \kappa_f(m y)
    \kappa_f(n y) \, \frac{d^\times y}{|y|}.
  \end{eqnarray*}
  The integral in the final expression factorizes over the
  places of $\mathbb{F}$; taking each $h_\mathfrak{p}$ to be the
  characteristic function of $\mathfrak{o}_p^*$ and
  $h_{\infty_j}(y) = h_0(Y y)$ for some fixed $h_0 \in
  C_c^\infty(\mathbb{R}_+^*)$ gives
  \begin{equation}
    \begin{split}
      \mu_f(E(H,\cdot) \phi)
      &= \mathop{\sum \sum}_{
        \substack{
          (l,n) \in (\mathbb{F}^* \cap \mathfrak{o})^2 \\
          m := n +l \in
          \mathbb{F}^* \cap \mathfrak{o}}
      }
      \frac{
        \lambda_\phi(l)
        \lambda_f(m) \lambda_f(n)
      }{
        \sqrt{
          |
          l^{\mathbf{1}}
          m^{\mathbf{1}}
          n^{\mathbf{1}}
          |
        }
      }
      \\
      &\quad \times
      \prod_{j=1}^{[\mathbb{F}:\mathbb{Q}]}
      \int_{y \in \mathbb{R}_+^*}
      h_0(Y y)
      \kappa_{\phi,\infty_j}(l_j y) \kappa_{f,\infty_j}(m_j y)
      \kappa_{f,\infty_j}(n_j y)
      \, \frac{d^\times y}{y}.
    \end{split}
  \end{equation}
  The integrals here, which may be treated either by bounding
  $\kappa_{\phi,\infty_j}$ trivially as in
  \eqref{eq:j-bound-trivial} (which is basically what
  Holowinsky and Marshall do) or by our sharp refinement
  given in Lemma \ref{lem:j-bound-legit}, essentially truncate the sum
  over $l$ and $n$ to a pair of boxes rather than regions
  bounded by a hyperbola and hyperplanes as in our approach.
\end{remark}

\section{Bounds for shifted sums under
  hyperbolas}\label{sec:bounds-shifted-sums}
In this section we establish Theorem \ref{thm:essential-sums},
whose hypotheses we now recall.
Let $d = [\mathbb{F}:\mathbb{Q}]$ be the degree
of our totally real number field $\mathbb{F}$,
so that $\mathbb{F}_\infty \cong \mathbb{R}^d$
(see \S\ref{sec:real-embeddings}).
Let $T \in
\mathbb{R}_{\geq 1}^{d}$
and $U \in
\mathbb{R}_{\geq 1}$ be parameters to which we associate the
region
\[
\mathcal{R}_{T,U} =
\left\{ x \in \mathbb{R}^{d}:
  x^{\mathbf{1}} \leq
  X,
  \,
  x \geq  T/U
\right\},
\quad X := T^{\mathbf{1}}.
\]
Let $\mathfrak{z} \subset \mathbb{F}$ be a fractional ideal and
$l \in \mathbb{F}^* \cap \mathfrak{z}$ a nonzero ``shift.''  Let
$\lambda : I_{\mathbb{F}} \rightarrow \mathbb{C}$ be a weakly
multiplicative function that satisfies $|\lambda(\mathfrak{a})|
\leq \tau(\mathfrak{a})$.  We would like to bound certain sums
\begin{equation}\label{eq2:sums-restate}
  \Sigma_\lambda(\mathfrak{z},l,T,U)
  :=
  \sum
  _{
    \substack{
      n \in \mathfrak{z}\\
      m := n + l \in \mathfrak{z} \\
      \mathbf{max}(m,n) \in \mathcal{R}_{T,U}
    }
  }
  \lvert \lambda (\mathfrak{z} ^{-1} m ) \lambda(\mathfrak{z}
  ^{-1} n)
  \rvert.
\end{equation}

Our strategy for doing so generalizes Holowinsky's.  By the
assumption $|\lambda(\mathfrak{a})| \leq \tau(\mathfrak{a})$ we
reduce to quantifying the ``independence'' of the
small prime factors
of $m$ and $n$, which in turn reduces to a classical sieving
problem (estimating how many lattice points in a region satisfy
some congruence conditions).  By general machinery due to
Linnik, R\'{e}nyi, Bombieri and Davenport, Montgomery and others
in the case $\mathbb{F} = \mathbb{Q}$ (see \cite[\S 27]{Dav80},
\cite[p180]{MR2061214} and \cite{MR2426239}), such classical sieving problems
follow from additive large sieve inequalities (quantifying the
approximate orthogonality of a family of additive characters on
a lattice when restricted to the intersection of that lattice
with a sufficiently smooth region), which in turn follow from
bounds for sums over well-spaced points in the support
$\mathcal{R}_{T,U}^\wedge$ of the Fourier transform of a smooth
majorizer for the region $\mathcal{R}_{T,U}$.

Some care is required when $[\mathbb{F}:\mathbb{Q}] > 1$ because
then $\mathcal{R}_{T,U}^\wedge$ will have long and thin regions
that (unfortunately) accomodate many well-spaced points.  In our
intended application the parameter $U$ is small enough that one
can successfully analyze $\mathcal{R}_{T,U}^\wedge$ without
using any properties of $\mathfrak{z}$ beyond that it
is a lattice, but to simplify our treatment and allow arbitrary
values of $U$ we instead exploit the symmetries of
the fractional ideal $\mathfrak{z}$
coming from the action of the units $\mathfrak{o}_+^*$.  First, we cover
$\mathcal{R}_{T,U}$ by $\ll \log(e U)^{n-1}$ boxes of volume
$X =T^{\mathbf{1}}$:
\begin{lemma}\label{lem:boxes-cover}
  There exists a finite collection
  $(\mathcal{R}_{\alpha})_{\alpha \in A}$
 of boxes
  \[
  \mathcal{R}_{\alpha} = [a_{\alpha,1},b_{\alpha,1}]
  \times \dotsb \times
  [a_{\alpha,d},b_{\alpha,d}]
  \subset \mathbb{R}_{\geq 0}^{d},
  \quad 0 \leq  a_{\alpha,j} < b_{\alpha,j}
  \]
  whose union contains $\mathcal{R}_{T,U}$
  with
  $\# A \ll \log(e U)^{d-1}$
  such that
  $\vol(\mathcal{R}_\alpha) = X$
  and $b_{\alpha,1} \dotsb b_{\alpha,d} \ll X$ for
  each $\alpha \in A$.
\end{lemma}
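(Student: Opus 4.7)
The plan is a deliberately asymmetric multi-dyadic decomposition: dyadically chop the first $d-1$ coordinates, and let the $d$-th coordinate absorb the remaining range dictated by the hyperbola and hyperplane constraints. Concretely, for each $(r_1,\dotsc,r_{d-1}) \in \mathbb{Z}_{\geq 0}^{d-1}$ I will take
\[
\mathcal{R}_r = \prod_{i<d}\bigl[2^{r_i}T_i/U,\, 2^{r_i+1}T_i/U\bigr] \,\times\, \bigl[T_d/U,\, T_d/U + U^{d-1}T_d/2^{r_1+\cdots+r_{d-1}}\bigr],
\]
the length of the last interval being chosen so that $\vol(\mathcal{R}_r) = X$ on the nose: the widths of the first $d-1$ factors multiply to $2^{r_1+\cdots+r_{d-1}}T_1\cdots T_{d-1}/U^{d-1}$, and this cancels against the reciprocal factor in the last interval, leaving $T_1\cdots T_d = X$.

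To verify that the $\mathcal{R}_r$ cover $\mathcal{R}_{T,U}$, pick $x \in \mathcal{R}_{T,U}$: the hyperplane bounds $x_i \geq T_i/U$ locate a unique $r_i \geq 0$ with $x_i \in [2^{r_i}T_i/U,\, 2^{r_i+1}T_i/U]$ for each $i<d$, and the hyperbola bound $x^{\mathbf{1}} \leq X$ combined with the hyperplane bounds $x_j \geq T_j/U$ for $j<d$ forces $x_d \leq T_d U^{d-1}/2^{r_1+\cdots+r_{d-1}}$, which lies in the $d$-th slot of $\mathcal{R}_r$. The same chain of inequalities shows that only tuples with $r_1 + \cdots + r_{d-1} \leq d\log_2 U$ can correspond to nonempty intersection with $\mathcal{R}_{T,U}$, so the number of relevant boxes is at most the number of weak compositions of an integer of size $\asymp \log U$ into $d-1$ nonnegative parts, namely $\ll \log(eU)^{d-1}$.

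The remaining condition $b_{\alpha,1}\dotsb b_{\alpha,d} \ll X$ is routine: the product of upper endpoints equals $\prod_{i<d}(2^{r_i+1}T_i/U)$ times $T_d/U + U^{d-1}T_d/2^{r_1+\cdots+r_{d-1}}$, and throughout the relevant range $r_1+\cdots+r_{d-1} \leq d\log_2 U$ the second summand of the last factor dominates, so the total is $\asymp 2^{d-1}X$.

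The only genuine obstacle is conceptual: one must simultaneously achieve \emph{exact} volume $X$, upper-vertex product $\ll X$, and the sharp count $\ll \log(eU)^{d-1}$ rather than the naive $\log(eU)^d$ of a fully dyadic scheme. Breaking the coordinate symmetry and letting one coordinate absorb the remaining slack is what reconciles all three conditions; once this asymmetry is introduced, the rest is bookkeeping.
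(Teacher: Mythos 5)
Your construction is correct: the volumes are exactly $X$ by design, the covering argument and the bound $b_{\alpha,1}\dotsb b_{\alpha,d}\le 2^dX$ check out, and the constraint $r_1+\dotsb+r_{d-1}\le d\log_2 U$ (forced by combining $x_d\ge T_d/U$ with $x_d\le T_dU^{d-1}/2^{r_1+\dotsb+r_{d-1}}$) gives the required count $\ll\log(eU)^{d-1}$. The only imprecision is cosmetic: the upper bound on $x_d$ follows from the dyadic lower bounds $x_j\ge 2^{r_j}T_j/U$ rather than from the raw hyperplane bounds $x_j\ge T_j/U$ as you phrase it, but your displayed inequality is the right one. The underlying strategy is the same as the paper's --- an asymmetric cover in which $d-1$ coordinates are dyadically decomposed and the last absorbs the slack under the hyperbola --- but the implementations differ in one respect. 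The paper first applies a pigeonhole argument to select, for each point $x$, an index $i$ with $\prod_{j\ne i}x_j\le\prod_{j\ne i}T_j$; it then decomposes the other coordinates dyadically around $T_j$ (allowing them to exceed $T_j$) and lets the $i$-th interval start at $0$, with finiteness coming from the two-sided constraints $a_j\le\lceil\log_2U\rceil$ and $\sum a_j\ge 0$. You instead hardwire the special coordinate to be the $d$-th and anchor the dyadic pieces at the hyperplanes $T_i/U$ going upward, so that finiteness and the count fall out directly from the hyperbola constraint; this dispenses with the pigeonhole step at the cost of a slightly less symmetric-looking family of boxes. Either route is fine for the application, since the subsequent reduction only uses that each box has volume $X$ with all upper endpoints' product $\ll X$, after which the $\mathfrak{o}_+^*$-action is used to balance the side lengths.
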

\begin{proof}
  Let $x \in \mathcal{R}_{T,U}$,
  so that $x_1 \dotsb x_d \leq T_1 \dotsb T_d$
  and $x_i \geq T_i/U$.
  By the pigeonhole principle, we have
  $\prod_{j \neq i} x_j \leq \prod_{j \leq i} T_j$
  for some index $i$;
  to simplify notation, suppose that $i =1$,
  so that $x_2 \dotsb x_d \leq T_2 \dotsb T_d$.
  Choose integers $a_2, \dotsc, a_d$
  so that
  \begin{equation*}
    \frac{T_i}{2^{a_i}}
    \leq x_i \leq \frac{T_i}{2^{a_i-1}}.
  \end{equation*}
  Since
  $0
  \leq 
  x_1 \leq T_1 T_2 \dotsb T_d / x_2 \dotsb x_d
  \leq 2^{a_2 + \dotsb + a_d} T_1$,
  we see that
  $x$ is contained in the box
  \[
  \mathcal{R} = \left[
    0,2^{a_2 + \dotsb + a_d} T_1
  \right]
  \times
  \left[
    \frac{T_2}{2^{a_2}},
    \frac{T_2}{2^{a_2-1}},
  \right]
  \times \dotsb
  \times
  \left[
    \frac{T_d}{2^{a_d}},
    \frac{T_d}{2^{a_d-1}},
  \right],
  \]
  which satisfies the desiderata of the lemma.
  Since $x_2 \dotsb x_d \leq T_2 \dotsb T_d$
  implies
  \[
  \frac{T_2}{2^{a_2}} \dotsb \frac{T_d}{2^{a_d}}
  \leq x_2 \dotsb x_d \leq T_2 \dotsb T_d,
  \]
  and because $x_i \geq T_i / U$,
  we deduce that
  \begin{equation}\label{eq:conditions-on-a}
    a_i \leq \lceil \log_2 U \rfloor
    \text{ for } i = 2, \dotsc, d
    \quad
    \text{ and }
    \quad 
    a_2 + \dotsb a_d
    \geq 0.
  \end{equation}
  There are $\ll \log(e U)^{d-1}$ tuples $(a_2,\dotsc,a_d) \in
  \mathbb{Z}^{d-1}$
  satisfying the conditions \eqref{eq:conditions-on-a}.
\end{proof}
Next,
because $\lambda$ and $\mathfrak{z}$ are invariant
under $\mathfrak{o}_+^*$,
we see that for any (totally positive) unit $\eta \in \mathfrak{o}_+^*$
and any region $\mathcal{R} \subset \mathbb{R}^d$,
we have
\[
\sum
_{
  \substack{
    n \in \mathfrak{z}\\
    m := n + l \in \mathfrak{z} \\
    \mathbf{max}(m,n) \in \mathcal{R}
  }
}
\lvert \lambda (\mathfrak{z} ^{-1} m ) \lambda(\mathfrak{z}
^{-1} n)
\rvert
=
\sum
_{
  \substack{
    n \in \mathfrak{z}\\
    m := n + \eta^{-1} l \in \mathfrak{z} \\
    \mathbf{max}(m,n) \in \eta \mathcal{R}
  }
}
\lvert \lambda (\mathfrak{z} ^{-1} m ) \lambda(\mathfrak{z}
^{-1} n)
\rvert
\]
where $\eta \mathcal{R} = \{\eta x: x \in \mathcal{R} \}$.
The $\mathfrak{o}_+^*$-orbit of any box
$\mathcal{R}_{\alpha}$ as in Lemma \ref{lem:boxes-cover}
contains a representative $[a_1,b_1] \times \dotsb \times
[a_d,b_d]$ for which $|a_i - b_i| \asymp |a_j - b_j| \asymp
X^{1/d}$ for all $i,j \in \{1, \dotsc, d\}$.
Thus
\begin{equation}
  \Sigma_\lambda(\mathfrak{z},l,T,U)
  \ll 
  \log(eU)^{d-1}
  \sup_{\mathcal{R}}
  \sup_{\eta \in \mathfrak{o}_+^*}
  \sum_{
    \substack{
      n \in \mathfrak{z}\\
      m := n + \eta^{-1} l \in \mathfrak{z} \\
      \mathbf{max}(m,n) \in \mathcal{R}
    }
  }
  |\lambda(\mathfrak{z}^{-1} m) \lambda(\mathfrak{z}^{-1}n)|
\end{equation}
where the supremum is taken over all boxes $\mathcal{R} =
[a_1,b_1] \times \dotsb \times [a_d,b_d]$ for which
$\vol(\mathcal{R}) = X$, $|a_i-b_i| \asymp X^{1/d}$,
$0 \leq  a_i < b_i$
and $\max(b_1,\dotsc,b_d) \ll X^{1/d}$, with the
implied constants depending only upon the field $\mathbb{F}$.
Finally, if $\mathbf{max}(m,n)$ belongs to such a box $\mathcal{R}$
with $m,n \in \mathbb{F}_{\infty+}^*$, then
both $m$ and $n$ belong to
the box $(0,b_1] \times \dotsb \times (0,b_d]$.
Therefore Theorem \ref{thm:essential-sums}
reduces to the following result,
which we shall establish in the remainder of this section.
\begin{theorem}\label{thm:shifted-sums-boxes}
  Let $\mathbb{F}$ be a totally real number field of degree $d =
  [\mathbb{F}:\mathbb{Q}]$, let $\lambda : I_{\mathbb{F}}
  \rightarrow \mathbb{R}_{\geq 0}$ be a nonnegative-valued
  multiplicative function that satisfies $\lambda(\mathfrak{a})
  \leq \tau(\mathfrak{a})$ for all $\mathfrak{a} \in
  I_{\mathbb{F}}$, let $\mathfrak{z}$
  be a fractional ideal in $\mathbb{F}$, let $\lambda^0 :
  \mathfrak{z} \rightarrow \mathbb{R}_{\geq 0}$ be the function
  $\lambda^0(n) = \lambda(\mathfrak{z}^{-1} n)$,
  let $X \geq 2$, and let
  \begin{equation}\label{eq:defn-R-X-z}
    \mathcal{R}_{X,\mathfrak{z}}
    =
    (0,(\norm(\mathfrak{z})X)^{1/d}] \times \dotsb \times
    (0,(\norm(\mathfrak{z})X)^{1/d}] \subset
    \mathbb{R}^d.
  \end{equation}
  Then for $l \in
  \mathfrak{z} \cap \mathbb{F}^*$,
  we have
  \begin{equation}
    \sum
    _{
      \substack{
        n \in \mathfrak{z} \cap \mathcal{R}_{X,\mathfrak{z}}\\
        m := n + l \in \mathfrak{z} \cap \mathcal{R}_{X,\mathfrak{z}}
      }
    }
    \lambda^0(m) \lambda^0(n)
    \ll_{\mathbb{F},\eps}
    \frac{X}{\log(X)^{2-\eps}}
    \prod_{
      \substack{
        \norm(\mathfrak{p})
        \leq X \\
      }
    }
    \left( 1 + \frac{2 \lambda(\mathfrak{p})}{\norm(\mathfrak{p})} \right).
  \end{equation}
\end{theorem}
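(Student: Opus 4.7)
My plan is to generalize Holowinsky's proof of the shifted convolution bound \cite[Thm 2]{holowinsky-2008} from $\mathbb{Z}$ to the ring of integers $\mathfrak{o}$, while simultaneously refining it to remove the factor $\tau(l)$ appearing in his original bound.  The overall strategy is a divisor-switching sieve: bound one of the two factors $\lambda^0(m)$ by a divisor sum, and handle the other via a Shiu-type estimate for multiplicative functions in arithmetic progressions, then sum over a family of moduli.

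First I would apply the hyperbola trick: the identity $\tau(\mathfrak{a}) \leq 2 \#\{\mathfrak{d} \mid \mathfrak{a} : \norm(\mathfrak{d}) \leq \norm(\mathfrak{a})^{1/2}\}$ together with the hypothesis $\lambda \leq \tau$ gives
\[
\sum_{\substack{n \in \mathfrak{z} \cap \mathcal{R}_{X,\mathfrak{z}} \\ m = n + l \in \mathfrak{z} \cap \mathcal{R}_{X,\mathfrak{z}}}} \lambda^0(m) \lambda^0(n)
\leq 2
\sum_{\substack{\mathfrak{d} \subset \mathfrak{o} \\ \norm(\mathfrak{d}) \leq X^{1/2}}}
\sum_{\substack{n \in \mathfrak{z} \cap \mathcal{R}_{X,\mathfrak{z}} \\ n + l \in \mathfrak{z} \mathfrak{d} \cap \mathcal{R}_{X,\mathfrak{z}}}} \lambda^0(n).
\]
For each $\mathfrak{d}$, the inner sum runs over $n$ in a sublattice of $\mathfrak{z}$ (cut out by the congruence $n \equiv -l \pmod{\mathfrak{z}\mathfrak{d}}$) intersected with the box $\mathcal{R}_{X,\mathfrak{z}}$, which has bounded aspect ratio by construction \eqref{eq:defn-R-X-z}.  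I would estimate this inner sum by a number-field analog of Shiu's theorem for multiplicative functions in arithmetic progressions, proved by the standard contour-shift argument applied to the Dedekind zeta function $\xi_{\mathbb{F}}$:
\[
\sum_{\substack{n \in \mathfrak{z} \cap \mathcal{R}_{X,\mathfrak{z}} \\ n \equiv -l \pmod{\mathfrak{z}\mathfrak{d}}}} \lambda^0(n)
\ll_{\mathbb{F},\eps}
\frac{X/\norm(\mathfrak{d})}{\log X}
\prod_{\substack{\norm(\mathfrak{p}) \leq X \\ \mathfrak{p} \nmid \mathfrak{d}}}
\left(1 + \frac{\lambda(\mathfrak{p})}{\norm(\mathfrak{p})}\right).
\]

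Summing over $\mathfrak{d}$ weighted by $1/\norm(\mathfrak{d})$ and performing the resulting Euler-product manipulation produces the target bound, with the factor $\prod_\mathfrak{p}(1 + 2\lambda(\mathfrak{p})/\norm(\mathfrak{p}))$ and the logarithmic decay $1/\log(X)^{2-\eps}$.  The refinement that removes Holowinsky's $\tau(l)$ factor enters through a more careful treatment of the coprimality condition: the dependence on $l$ in the above estimate appears only through $\gcd(\mathfrak{d},\mathfrak{z}^{-1}l)$, and summing $\sum_{\mathfrak{f} \mid \mathfrak{z}^{-1}l} 1/\norm(\mathfrak{f}) = \sigma(\mathfrak{z}^{-1}l)/\norm(\mathfrak{z}^{-1}l) \ll \log \log X$ uniformly in $l$ (by a classical bound) absorbs this factor into $\log(X)^\eps$, rather than the weaker $\tau(l) \leq X^{\eps}$ estimate used in \cite{holowinsky-2008}.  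The main obstacle I anticipate is establishing the number-field Shiu bound with sufficient uniformity in the modulus $\mathfrak{d}$ as $\norm(\mathfrak{d})$ ranges up to $X^{1/2}$; while essentially standard, the precise error-term analysis and the Euler-product step that produces the sharp factor $\prod_\mathfrak{p}(1 + 2\lambda(\mathfrak{p})/\norm(\mathfrak{p}))$ (rather than merely $\prod_\mathfrak{p}(1+\lambda(\mathfrak{p})/\norm(\mathfrak{p}))^2$) require careful bookkeeping with signed divisor weights.
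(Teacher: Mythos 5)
There is a genuine gap, and it is precisely the point the Euler-product bookkeeping you flag at the end but do not carry out. The inequality $\lambda^0(m) \leq \tau(\mathfrak{z}^{-1}m) \leq 2\,\#\{\mathfrak{d}\mid\mathfrak{z}^{-1}m : \norm(\mathfrak{d})\leq \norm(\mathfrak{z}^{-1}m)^{1/2}\}$ discards all information about how small $\lambda$ actually is on the weight $m$: it replaces the weight $\lambda^0(m)$ by (effectively) the weight $\tau(\mathfrak{z}^{-1}m)$. Concretely, after inserting your Shiu-in-progressions estimate
$\sum_{n\equiv -l (\mathfrak{z}\mathfrak{d})}\lambda^0(n)\ll (X/\norm\mathfrak{d})(\log X)^{-1}\prod_{\mathfrak{p}\nmid\mathfrak{d}}(1+\lambda(\mathfrak{p})/\norm\mathfrak{p})$
and summing over $\norm(\mathfrak{d})\leq X^{1/2}$, the sum $\sum_{\mathfrak{d}}\norm(\mathfrak{d})^{-1}\prod_{\mathfrak{p}\nmid\mathfrak{d}}(1+\lambda(\mathfrak{p})/\norm\mathfrak{p})\asymp \log X\cdot\prod_\mathfrak{p}(1+\lambda(\mathfrak{p})/\norm\mathfrak{p})$, so your route ends with a bound $\ll X\prod_\mathfrak{p}(1+\lambda(\mathfrak{p})/\norm\mathfrak{p})$. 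The target is $\frac{X}{\log(X)^{2-\eps}}\prod_\mathfrak{p}(1+2\lambda(\mathfrak{p})/\norm\mathfrak{p})\approx \frac{X}{(\log X)^2}\prod_\mathfrak{p}(1+\lambda(\mathfrak{p})/\norm\mathfrak{p})^2$. The ratio of your bound to the target is $\asymp(\log X)^2/\prod_\mathfrak{p}(1+\lambda(\mathfrak{p})/\norm\mathfrak{p})$, which is $\asymp 1$ only when $\lambda(\mathfrak{p})\approx 2$ on average; for $\lambda\equiv 1$, for instance, you obtain $X\log X$ where the theorem asserts $\ll X(\log X)^\eps$. That $\log$ is fatal for the application, since the whole Holowinsky strategy hinges on gaining when $|\lambda_f(\mathfrak{p})|$ is typically well below $2$.

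The paper never uses a bare divisor-count for $m$. It partitions $n$ by a $z$-datum $(\mathfrak{a},\mathfrak{b},\mathfrak{c})$ recording the $z$-smooth parts of $\mathfrak{z}^{-1}m=\mathfrak{a}\mathfrak{c}\cdot\mathfrak{m}$ and $\mathfrak{z}^{-1}n=\mathfrak{b}\mathfrak{c}\cdot\mathfrak{n}$ with $(\mathfrak{a},\mathfrak{b})=1$ and $\mathfrak{c}\mid\mathfrak{z}^{-1}l$, then keeps the multiplicative weight: using $\lambda(\mathfrak{m}),\lambda(\mathfrak{n})\leq 2^s$ for the $z$-rough parts (with $s$ the number of primes allowed by the cutoff) leaves the full weight $\lambda(\mathfrak{a}\mathfrak{c})\lambda(\mathfrak{b}\mathfrak{c})$ attached to each class, and a Montgomery/Kowalski large-sieve count bounds $\#\{n\text{ with given }(\mathfrak{a},\mathfrak{b},\mathfrak{c})\}$ by a $(\mathfrak{a},\mathfrak{b},\mathfrak{c})$-dependent density (Lemma \ref{lem:reduce-classical-sieve-problem} and Proposition \ref{prop:large-sieve-as-a-sieve}). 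The factor $\prod_{\mathfrak{p}}(1+2\lambda(\mathfrak{p})/\norm\mathfrak{p})$ then emerges from $\sum_{\mathfrak{a},\mathfrak{b}}\lambda(\mathfrak{a}\mathfrak{c})\lambda(\mathfrak{b}\mathfrak{c})/\bigl(\phi(\mathfrak{a})\phi(\mathfrak{b})\bigr)$, and the removal of the $\tau(l)$ factor is obtained by tracking the multiplicative function $\psi(\mathfrak{z}^{-1}l)$ in Lemma \ref{lem:few-small-primes}, not from a $\sum_{\mathfrak{f}\mid\mathfrak{z}^{-1}l}\norm(\mathfrak{f})^{-1}$ argument. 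To save your route you would need to replace the unweighted divisor count $\#\{\mathfrak{d}:\ldots\}$ with a $\lambda$-weighted decomposition of $\lambda^0(m)$ across divisors, which is essentially the smooth/rough splitting the paper uses, so at that point you would have reconstructed the $z$-datum argument in different notation.
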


Preserve the hypotheses and notation of Theorem \ref{thm:shifted-sums-boxes}.
Throughout this section the nonzero shift $l \in \mathfrak{z}
\cap \mathbb{F}^*$ is fixed,
while $m$ and $n$ denote
elements of
$\mathfrak{z}$ having difference $m - n = l$.
To ease the notation, we
write $|\mathfrak{a}| = \norm(\mathfrak{a})$ for the norm of an
integral ideal $\mathfrak{a}$.
Theorem \ref{thm:shifted-sums-boxes} is trivial
for bounded values of $X$;
thus we may and shall assume for convenience
that $X$ is sufficiently large,
so that for
instance $\log \log(X) \gg 1$.

For a real parameter
\begin{equation}\label{eq:z-small-power-of-X}
  z =  X^{1/s}, \quad
  s \in \mathbb{R}_{>0},
\end{equation}
define the \emph{$z$-part}
of an element $n \in \mathfrak{z}$
to be the greatest divisor
of the integral ideal $\mathfrak{z}^{-1} n$ each of whose prime
factors has norm at most $z$, so that if
$\mathfrak{z}^{-1} n$
factors as a product of prime powers $\prod \mathfrak{p}_i^{k_i}$,
then the $z$-part of $n$
is $\prod_{|\mathfrak{p}_i| \leq z} \mathfrak{p}_i^{k_i}$.
Define the \emph{$z$-datum} of $n$
to be the unique triple
$(\mathfrak{a},\mathfrak{b},\mathfrak{c})$
of integral ideals
for which
\begin{itemize}
\item $\mathfrak{a}$ and $\mathfrak{b}$ are coprime,
\item $\mathfrak{a} \mathfrak{c}$ is the $z$-part of $m := n+l$,
  and
\item $\mathfrak{b} \mathfrak{c}$ is the $z$-part of $n$.
\end{itemize}
Thus the size of $\mathfrak{c}$ quantifies the overlap between
small primes occurring in $\mathfrak{z}^{-1} m$ and
$\mathfrak{z}^{-1} n$.  Let $\mathcal{Z}$ denote
the set of all $z$-data that arise in this way and
$\mathfrak{z}_{\mathfrak{a},\mathfrak{b},\mathfrak{c}}$ the set
of all elements $n \in \mathfrak{z}$ having $z$-datum
$(\mathfrak{a},\mathfrak{b},\mathfrak{c})$, so that we have a
partition
\begin{equation}\label{eq:partition-z-datum}
  \mathfrak{z} = \sqcup
  \{\mathfrak{z}_{\mathfrak{a},\mathfrak{b},\mathfrak{c}}:
  (\mathfrak{a},\mathfrak{b},\mathfrak{c}) \in \mathcal{Z} \}.
\end{equation}
Note that for all $(\mathfrak{a},\mathfrak{b},\mathfrak{c}) \in
\mathcal{Z}$
we have $\mathfrak{c} | \mathfrak{z}^{-1} l$,
so that $\mathfrak{c}^{-1} \mathfrak{z}^{-1} l$
is an integral ideal.

Now let
\begin{equation}\label{eq:y-small-power-of-X}
  y = X^{\alpha}, \quad \alpha \in \mathbb{R}_{>0}
\end{equation}
be a real parameter
and partition $\mathcal{Z}$
into subsets
\begin{align*}
  \mathcal{Z}_{\leq y}
  &= \{(\mathfrak{a},\mathfrak{b},\mathfrak{c}) \in \mathcal{Z}:
  \max(|\mathfrak{a} \mathfrak{c}|, |\mathfrak{b} \mathfrak{c}|)
  \leq y\},\\
  \mathcal{Z}_{> y}
  &= \{(\mathfrak{a},\mathfrak{b},\mathfrak{c}) \in \mathcal{Z}:
  \max(|\mathfrak{a} \mathfrak{c}|, |\mathfrak{b} \mathfrak{c}|)
  > y\}.
\end{align*}
Thus the $z$-datum of $n \in \mathfrak{z}$
belongs to
$\mathcal{Z}_{\leq y}$ if
both
$\mathfrak{z}^{-1} m$ and $\mathfrak{z}^{-1} n$
have few small prime factors
and to $\mathcal{Z}_{>y}$
if either
$\mathfrak{z}^{-1} m$ or $\mathfrak{z}^{-1} n$
has many small prime factors,
where $y$ determines
the threshold separating ``few'' from ``many.''
The latter case occurs infrequently,
as we now show in Lemma \ref{lem:many-small-primes};
the former case will be addressed
by Lemma \ref{lem:few-small-primes}.
\begin{lemma}\label{lem:many-small-primes}
  Suppose that $2 \leq z \leq y \leq X$
  with $s$ and $\alpha$ as in \eqref{eq:z-small-power-of-X},
  \eqref{eq:y-small-power-of-X} such that
  $s \asymp \log \log(X)$
  and $\alpha \asymp 1$.
  Then
  \begin{equation}\label{eq:many-small-primes-bound}
    \sum_{(\mathfrak{a},\mathfrak{b},\mathfrak{c})
      \in \mathcal{Z}_{>y}}
    \sum_{
      \substack{
        n \in
        \mathfrak{z}_{\mathfrak{a},\mathfrak{b},\mathfrak{c}} \\
        m,n \in \mathcal{R}_{X,\mathfrak{z}}
      }
    }
    \lambda ^0 (m) \lambda ^0 (n)
    \ll X \log(X)^{-A}.
  \end{equation}
\end{lemma}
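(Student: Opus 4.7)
The plan is to use the hypothesis $\lambda \leq \tau$ to reduce to a shifted divisor sum restricted by a smooth divisibility condition, and then apply Rankin's trick to exploit the smoothness constraint.

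By the symmetry $(m,n) \leftrightarrow (n,m)$ (which sends $l \mapsto -l$ and preserves \eqref{eq:many-small-primes-bound}), I restrict at the cost of a factor of two to those $(\mathfrak{a},\mathfrak{b},\mathfrak{c}) \in \mathcal{Z}_{>y}$ with $|\mathfrak{a}\mathfrak{c}| > y$. The ideal $\mathfrak{d} := \mathfrak{a}\mathfrak{c}$ is then a $z$-smooth divisor of $\mathfrak{z}^{-1}m$ of norm exceeding $y$. Relaxing ``$\mathfrak{d}$ is the $z$-part'' to ``$\mathfrak{d}$ divides'' and applying $\lambda \leq \tau$ reduces the LHS of \eqref{eq:many-small-primes-bound} to
\[
\ll \sum_{\substack{\mathfrak{d}\ z\text{-smooth}\\ |\mathfrak{d}|>y}}\ \sum_{\substack{m,n\in\mathfrak{z}\cap\mathcal{R}_{X,\mathfrak{z}}\\ m-n=l,\ \mathfrak{d}\mid\mathfrak{z}^{-1}m}}\tau(\mathfrak{z}^{-1}m)\tau(\mathfrak{z}^{-1}n).
\]
The inner double sum is a shifted divisor sum with $m \equiv 0 \pmod{\mathfrak{z}\mathfrak{d}}$. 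Cauchy--Schwarz, together with the classical estimate $\sum_{|\mathfrak{a}|\leq Y}\tau(\mathfrak{a})^2 \ll Y\log(Y)^3$ transferred from ideals to lattice points in $\mathfrak{z}\cap\mathcal{R}_{X,\mathfrak{z}}$ (the cube shape of $\mathcal{R}_{X,\mathfrak{z}}$ keeps the ideal/element correspondence $O(1)$-to-one on average), and a routine treatment of the dual congruence $n \equiv -l \pmod{\mathfrak{z}\mathfrak{d}}$ (which forces $\gcd(\mathfrak{d},\mathfrak{z}^{-1}l)\mid\mathfrak{z}^{-1}n$), give
\[
\sum_{\substack{m,n\in\mathfrak{z}\cap\mathcal{R}_{X,\mathfrak{z}}\\ m-n=l,\ \mathfrak{d}\mid\mathfrak{z}^{-1}m}}\tau(\mathfrak{z}^{-1}m)\tau(\mathfrak{z}^{-1}n) \ll \tau(\mathfrak{d})^{C_0}\frac{X}{|\mathfrak{d}|}\log(X)^{C_0}
\]
for some absolute constant $C_0$.

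It then remains to sum this estimate over $z$-smooth ideals $\mathfrak{d}$ of norm exceeding $y$. By Rankin's trick, for any $\beta > 0$,
\[
\sum_{\substack{\mathfrak{d}\ z\text{-smooth}\\ |\mathfrak{d}|>y}} \frac{\tau(\mathfrak{d})^{C_0}}{|\mathfrak{d}|} \leq y^{-\beta}\prod_{|\mathfrak{p}|\leq z}\left(1+\frac{O_{C_0}(|\mathfrak{p}|^\beta)}{|\mathfrak{p}|}\right).
\]
Choose $\beta = c/\log z$ for a large constant $c$. Then $|\mathfrak{p}|^\beta \leq z^\beta = e^c$ for $|\mathfrak{p}| \leq z$, so Mertens' theorem for $\mathbb{F}$ bounds the Euler product by $\exp(O_{C_0,c}(\log\log z)) \ll \log(X)^{C_1(C_0,c)}$. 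Meanwhile $y^{-\beta} = \exp(-c\alpha s) \leq \log(X)^{-c\alpha c_1}$ by the hypothesis $s \geq c_1 \log\log X$. Choosing $c$ sufficiently large relative to $A$, $\alpha$, $c_1$, and $C_0$ yields the desired bound $\ll X\log(X)^{-A}$.

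The crucial calibration, and the main place where the hypotheses on $s$ and $\alpha$ enter, is the following: because $s \asymp \log\log X$ forces $\log z \asymp \log X/\log\log X$, the single choice $\beta = c/\log z$ simultaneously keeps $|\mathfrak{p}|^\beta = O_c(1)$ for $|\mathfrak{p}| \leq z$ (so the Euler product loses only a bounded power of $\log X$) and makes $y^{-\beta} = \exp(-c\alpha s)$ an arbitrarily large negative power of $\log X$ as $c$ grows. Either strictly smaller $s$ or a much smaller $z$ would break this balance, which is the genuine technical constraint of the argument.
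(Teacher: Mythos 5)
Your overall strategy is sound and genuinely different from the paper's in two respects: you expand over smooth divisors $\mathfrak{d}$ and extract the saving $X/|\mathfrak{d}|$ term by term from the congruence $\mathfrak{d}\mid\mathfrak{z}^{-1}m$, whereas the paper applies Cauchy--Schwarz directly to the indicator of ``$z$-part $>y$'' and only then counts $\#(\mathfrak{a}\mathfrak{z}\cap\mathcal{R}_{X,\mathfrak{z}})$; and you handle the resulting sum over smooth ideals by Rankin's trick, whereas the paper quotes Krause's theorem on smooth ideals (the number-field Dickman function) together with partial summation. The second difference is where your argument, as written, breaks.

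The calibration in your final step is wrong. Bounding $|\mathfrak{p}|^{\beta}\le e^{c}$ uniformly for $|\mathfrak{p}|\le z$ gives
\[
\prod_{|\mathfrak{p}|\le z}\Bigl(1+\frac{O_{C_0}(|\mathfrak{p}|^{\beta})}{|\mathfrak{p}|}\Bigr)
\le\exp\Bigl(O_{C_0}(e^{c})\sum_{|\mathfrak{p}|\le z}\frac{1}{|\mathfrak{p}|}\Bigr)
\ll(\log X)^{O_{C_0}(e^{c})},
\]
so your $C_1(C_0,c)$ is of size $e^{c}$, while the Rankin saving $y^{-\beta}$ is only $(\log X)^{-\alpha c_1 c}$. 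The function $c\mapsto\alpha c_1 c-Ke^{c}$ is bounded above (and may be everywhere negative), so ``choosing $c$ sufficiently large'' cannot produce an arbitrary exponent $-A$; increasing $c$ eventually makes the bound worse. The fix is the sharper form of Rankin's trick: write $|\mathfrak{p}|^{\beta}/|\mathfrak{p}|=1/|\mathfrak{p}|+(|\mathfrak{p}|^{\beta}-1)/|\mathfrak{p}|$ and use $|\mathfrak{p}|^{\beta}-1\le\beta e^{c}\log|\mathfrak{p}|$ together with $\sum_{|\mathfrak{p}|\le z}\log|\mathfrak{p}|/|\mathfrak{p}|\ll\log z$ to get $\sum_{|\mathfrak{p}|\le z}(|\mathfrak{p}|^{\beta}-1)/|\mathfrak{p}|\ll c e^{c}=O_c(1)$; then the Euler product is $\ll_{c}(\log X)^{2^{C_0}}$ with exponent \emph{independent} of $c$, and your concluding step goes through. (Alternatively, quote the smooth-ideal count as the paper does, which beats every power of $\log X$ outright.) A secondary, more minor point: the bound on $\sum\tau(\mathfrak{z}^{-1}n)^2$ over the progression $n\equiv-l\pmod{\mathfrak{d}\mathfrak{z}}$ that you call routine does require an argument (a Shiu-type bound or a gcd decomposition), since a divisor-sum estimate over a progression to a possibly large modulus does not follow from Cauchy--Schwarz alone; the paper sidesteps this entirely by never restricting its $\tau^2$-sums to progressions.
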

\begin{proof}
  The LHS of \eqref{eq:many-small-primes-bound} is the sum of
  $\lambda^0(m) \lambda^0(n)$ taken over those $m,n \in
  \mathfrak{z} \cap \mathcal{R}_{X,\mathfrak{z}}$ with $m-n = l$
  for which the $z$-part of
  either $m$ or $n$ has norm greater than $y$.  Writing
  $\mathfrak{a}$ and $\mathfrak{b}$ for the $z$-parts of $m$ and
  $n$ and invoking Cauchy-Schwarz twice, we see that the LHS of
  \eqref{eq:many-small-primes-bound} is
  \begin{equation*}
    \begin{split}
      &\leq
      \left(
        \sum_{
          \substack{
            y < |\mathfrak{a}| \leq X \\
            \mathfrak{p} | \mathfrak{a} \implies
            |\mathfrak{p}| \leq z
          }
        }
        \# ( \mathfrak{a} \mathfrak{z} \cap \mathcal{R}_{X,\mathfrak{z}})
      \right)^{1/4}
      \left(
        \sum_{
          m \in \mathfrak{z} \cap \mathcal{R}_{X,\mathfrak{z}}
        }
        \lambda^0(m)^4
      \right)^{1/4}
      \left(
        \sum_{
          n \in \mathfrak{z} \cap \mathcal{R}_{X,\mathfrak{z}}
        }
        \lambda^0(n)^2
      \right)^{1/2}
      \\
      &\quad
      +
      \left(
        \sum_{
          \substack{
            y < |\mathfrak{b}| \leq X \\
            \mathfrak{p} | \mathfrak{b} \implies
            |\mathfrak{p}| \leq z
          }
        }
        \# ( \mathfrak{b} \mathfrak{z} \cap \mathcal{R}_{X,\mathfrak{z}})
      \right)^{1/4}
      \left(
        \sum_{
          m \in \mathfrak{z} \cap \mathcal{R}_{X,\mathfrak{z}}
        }
        \lambda^0(m)^2
      \right)^{1/2}
      \left(
        \sum_{
          n \in \mathfrak{z} \cap \mathcal{R}_{X,\mathfrak{z}}
        }
        \lambda^0(n)^4
      \right)^{1/4}.
    \end{split}
  \end{equation*}
  We have
  $\sum_{m \in \mathfrak{z} \cap \mathcal{R}_{X,\mathfrak{z}}}
  \lambda^0(m)^4 \ll
  X\log(X)^{15}$
  and $\sum_{m \in \mathfrak{z} \cap \mathcal{R}_{X,\mathfrak{z}}}
  \lambda^0(m)^2 \ll
  X\log(X)^{3}$
  by the same argument as when $\mathbb{F} = \mathbb{Q}$
  (see \cite[\S 1.6]{MR2061214})
  and $\# (\mathfrak{a} \mathfrak{z} \cap \mathcal{R}_{X,\mathfrak{z}})
  \ll 1 + |\mathfrak{a}|^{-1} X \ll |\mathfrak{a}|^{-1} X$,
  so that
  \begin{equation}\label{eq:sixsums2}
    \sum_{(\mathfrak{a},\mathfrak{b},\mathfrak{c})
      \in \mathcal{Z}_{>y}}
    \sum_{
      \substack{
        n \in
        \mathfrak{z}_{\mathfrak{a},\mathfrak{b},\mathfrak{c}} \\
        m,n \in \mathcal{R}_{X,\mathfrak{z}}
      }
    }
    \lambda ^0 (m) \lambda ^0 (n)
    \ll X \log(X)^{O(1)}
    \left(
      \sum_{
        \substack{
          y < |\mathfrak{a}| \leq X \\
          \mathfrak{p} | \mathfrak{a} \implies
          |\mathfrak{p}| \leq z
        }
      }
      \frac{1}{|\mathfrak{a}|}
    \right)^{1/4}.
  \end{equation}
  Let $\Psi(t,z)$ denote the number of integral ideals
  $\mathfrak{a} \subset \mathfrak{o}$
  of norm $|\mathfrak{a}| \leq t$
  each of whose prime divisors
  $\mathfrak{p} | \mathfrak{a}$
  satisfy  $|\mathfrak{p}| \leq z$,
  so that by partial summation
  \begin{equation}\label{eq:partial-summation-krause}
    \sum_{
      \substack{
        y < |\mathfrak{a}| \leq X \\
        \mathfrak{p} | \mathfrak{a} \implies
        |\mathfrak{p}| \leq z
      }
    }
    \frac{1}{|\mathfrak{a}|}
    = \frac{\Psi(X,z)}{X}
    -  \frac{\Psi(y,z)}{y}
    + \int _y ^X \frac{\Psi(t,z)}{t^2} \, d t.
  \end{equation}
  A theorem of Krause \cite{MR1078363} (see also the survey
  \cite{MR1265913}) asserts that
  \[
  \Psi(t,z) = t \rho(u) \left( 1
    + O \left( \frac{\log(u+1)}{\log z} \right) \right),
  \quad u := \frac{\log t}{\log z}
  \]
  uniformly for $t \geq 2$ and $1 \leq u \leq (\log
  z)^{3/5-\eps}$ for any $\eps > 0$, where the \emph{Dickman
    function} $\rho : \mathbb{R}_{> 0} \to \mathbb{R}_{>0}$
  satisfies the asymptotics $\log \rho(u) = -(1+o(1)) u \log u$
  as $u \to +\infty$.
  For $y \leq t \leq X$, our assumptions
  $\alpha \asymp 1$ and $s \asymp \log \log(X)$
  imply that $u \asymp \log \log t$.
  Thus $\log z \asymp \log t / \log \log t$, so the condition
  for uniformity is satisfied and we obtain
  \[
  \Psi(t,z) \ll
  t \exp(-2 C \log \log t \log \log \log t) = t (\log t)^{-2 C
    \log \log \log t} \ll_A t (\log t)^{-A}
  \]
  for some $C > 0$ and every $A > 0$.
  It follows from \eqref{eq:partial-summation-krause}
  that
  \begin{equation}\label{eq:sofew}
    \sum_{
      \substack{
        y < |\mathfrak{a}| \leq X \\
        \mathfrak{p} | \mathfrak{a} \implies
        |\mathfrak{p}| \leq z
      }
    }
    \frac{1}{|\mathfrak{a}|}
    \ll_A \log(X)^{-A}.
  \end{equation}
  We deduce the required bound
  by substituting
  \eqref{eq:sofew} into \eqref{eq:sixsums2}
  and taking $A$ sufficiently large.
\end{proof}

On the other hand, if $\mathfrak{z}^{-1} m$ and
$\mathfrak{z}^{-1} n$ have few small prime factors, then
we shall show
by an application of the large sieve
that they typically have few \emph{common} small prime factors;
anticipating the bound
given by Corollary \ref{cor:bound-for-B-y-z},
set
\begin{equation}\label{eq:defn-B-of-y-z}
  B(y,z)
  := \sup_{(\mathfrak{a},\mathfrak{b},\mathfrak{c}) \in
    \mathcal{Z}_{\leq y}}
  \frac{\# \{
    n \in
    \mathfrak{z}_{\mathfrak{a},\mathfrak{b},\mathfrak{c}}
    :
    m,n \in \mathcal{R}_{X}
    \}}
  {
    \displaystyle \frac{|\mathfrak{z}^{-1} l|}{|\mathfrak{c}|^2
      \phi(\mathfrak{a} \mathfrak{b} \mathfrak{c}^{-1}
      \mathfrak{z}^{-1} l)} },
\end{equation}
where $\phi$ denotes the Euler phi function (multiplicative,
$\mathfrak{p}^k \mapsto |\mathfrak{p}|^{k-1} (|\mathfrak{p}| -
1)$).
\begin{lemma}\label{lem:few-small-primes}
  For $y,z$ as in
  \eqref{eq:z-small-power-of-X},
  \eqref{eq:y-small-power-of-X},
  we have
  \begin{equation}\label{eq:few-small-primes}
    \sum_{(\mathfrak{a},\mathfrak{b},\mathfrak{c})
      \in \mathcal{Z}_{\leq y}}
    \sum_{
      \substack{
        n \in
        \mathfrak{z}_{\mathfrak{a},\mathfrak{b},\mathfrak{c}} \\
        m,n \in \mathfrak{z} \cap \mathcal{R}_{X,\mathfrak{z}}
      }
    }
    \lambda^0(m) \lambda^0(n)
    \ll 4^s B(y,z)
    \log(X)^{\eps}
    \prod _{
      \substack{
        \lvert \mathfrak{p}  \rvert \leq z \\
      }
    }
    \left( 1 + \frac{2 \lambda (\mathfrak{p} )}{\lvert
        \mathfrak{p}  \rvert} \right
    ).
  \end{equation}
\end{lemma}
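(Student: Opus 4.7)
The plan is to split $\lambda^0(m)\lambda^0(n)$ multiplicatively into a small-prime (norm $\leq z$) part and a large-prime (norm $>z$) part, treat the large-prime part via the counting bound $B(y,z)$ together with a divisor-sum estimate, and assemble the small-prime part into an Euler product.

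Concretely, writing $\mathfrak{m}''$ and $\mathfrak{n}''$ for the parts of $\mathfrak{z}^{-1}m$ and $\mathfrak{z}^{-1}n$ supported on primes of norm $>z$, weak multiplicativity gives $\lambda^0(m)=\lambda(\mathfrak{a}\mathfrak{c})\lambda(\mathfrak{m}'')$ and $\lambda^0(n)=\lambda(\mathfrak{b}\mathfrak{c})\lambda(\mathfrak{n}'')$; the hypothesis $|\lambda|\leq\tau$ bounds the large-prime weights by $\tau(\mathfrak{m}'')\tau(\mathfrak{n}'')$. I would then apply a Shiu-type divisor-sum estimate on the lattice $\mathfrak{z}\cap\mathcal{R}_{X,\mathfrak{z}}$, restricted by the congruences encoded in the $z$-datum, to obtain
\[
\sum_{\substack{n\in\mathfrak{z}_{\mathfrak{a},\mathfrak{b},\mathfrak{c}}\\ m,n\in\mathcal{R}_{X,\mathfrak{z}}}}\tau(\mathfrak{m}'')\tau(\mathfrak{n}'')\ll B(y,z)\cdot\frac{|\mathfrak{z}^{-1}l|}{|\mathfrak{c}|^{2}\phi(\mathfrak{a}\mathfrak{b}\mathfrak{c}^{-1}\mathfrak{z}^{-1}l)}\cdot\log(X)^{\eps},
\]
the first two factors coming from the definition of $B(y,z)$ and the $\log(X)^{\eps}$ arising from dyadic decomposition of the divisor weights on the $(>z)$-parts.

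This reduces the lemma to an inequality of the shape
\[
\sum_{(\mathfrak{a},\mathfrak{b},\mathfrak{c})\in\mathcal{Z}_{\leq y}}\frac{\lambda(\mathfrak{a}\mathfrak{c})\lambda(\mathfrak{b}\mathfrak{c})\cdot|\mathfrak{z}^{-1}l|}{|\mathfrak{c}|^{2}\phi(\mathfrak{a}\mathfrak{b}\mathfrak{c}^{-1}\mathfrak{z}^{-1}l)}\ll 4^{s}\prod_{|\mathfrak{p}|\leq z}\left(1+\frac{2\lambda(\mathfrak{p})}{|\mathfrak{p}|}\right).
\]
After dropping the side constraints $|\mathfrak{a}\mathfrak{c}|,|\mathfrak{b}\mathfrak{c}|\leq y$, the left-hand side factors as an Euler product over primes $\mathfrak{p}$ with $|\mathfrak{p}|\leq z$; I would compute each local factor by separating the cases $\mathfrak{p}\mid\mathfrak{z}^{-1}l$ and $\mathfrak{p}\nmid\mathfrak{z}^{-1}l$. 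The linear contributions yield the Euler factor $(1+2\lambda(\mathfrak{p})/|\mathfrak{p}|)$, while the higher prime-power terms, controlled via $\lambda(\mathfrak{p}^{k})\leq k+1$, are $O(1/|\mathfrak{p}|^{2})$ per prime and assemble to a bounded multiplicative factor by Mertens-type estimates; the prefactor $4^{s}$ absorbs the accumulated losses coming from prime powers $k\geq 2$.

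The main obstacle is establishing the Shiu-type divisor bound uniformly in the $z$-datum $(\mathfrak{a},\mathfrak{b},\mathfrak{c})$ and the shift $l$: such estimates are classical for $\tau(n)\tau(n+l)$ over intervals in $\mathbb{Z}$, but transplanting the argument to the hypercube $\mathcal{R}_{X,\mathfrak{z}}\subset\mathbb{F}_{\infty}\cong\mathbb{R}^{d}$ with congruence conditions specified by the $z$-datum takes some care. A secondary subtlety is the local-factor analysis at primes $\mathfrak{p}\mid\mathfrak{z}^{-1}l$, where the totient $\phi(\mathfrak{a}\mathfrak{b}\mathfrak{c}^{-1}\mathfrak{z}^{-1}l)$ receives additional input from $\mathfrak{z}^{-1}l$ that must be untangled to recover the clean Euler product in the stated bound.
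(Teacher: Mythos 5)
There is a genuine gap at the step you yourself flag as the main obstacle, and it is not a technicality that ``takes some care'' --- it is the crux. You propose to bound
$\sum \tau(\mathfrak{m}'')\tau(\mathfrak{n}'')$ over the sifted set $\mathfrak{z}_{\mathfrak{a},\mathfrak{b},\mathfrak{c}}\cap\mathcal{R}_{X,\mathfrak{z}}$ by (cardinality of that set)$\times\log(X)^{\eps}$ via a Shiu-type theorem. But Shiu-type estimates control averages of divisor functions over full boxes or over arithmetic progressions with modulus a small power of $X$; here the set is cut out by coprimality conditions at \emph{every} prime of norm up to $z=X^{1/s}$ with $s\asymp\log\log X$, so it is a genuinely sparse sifted set (of density roughly $1/\log(z)^{2}$ after accounting for the $z$-datum), and the weights $\tau(\mathfrak{m}'')$, $\tau(\mathfrak{n}'')$ are supported on divisors composed of primes of norm $>z$, which can be as large as $X$. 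Opening these divisor sums produces moduli far beyond any level of distribution accessible to the large sieve, so no standard argument gives the claimed average bound uniformly in $(\mathfrak{a},\mathfrak{b},\mathfrak{c})$ and $l$. The observation that makes the lemma go through --- and which your proposal is missing --- is that no averaging is needed at all: since every prime dividing $\mathfrak{m}''$ has norm $>z$ and $|\mathfrak{m}''|\le X=z^{s}$, the ideal $\mathfrak{m}''$ has at most $s$ prime factors counted with multiplicity, whence $\lambda(\mathfrak{m}'')\le\prod(a_i+1)\le 2^{\sum a_i}\le 2^{s}$ \emph{pointwise}, and likewise for $\mathfrak{n}''$. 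This gives $\lambda^{0}(m)\lambda^{0}(n)\le 4^{s}\lambda(\mathfrak{a}\mathfrak{c})\lambda(\mathfrak{b}\mathfrak{c})$ term by term, after which the cardinality bound from the definition of $B(y,z)$ applies directly.

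A secondary point: your bookkeeping of where the two loss factors come from is inverted. The $4^{s}$ is exactly the pointwise large-prime bound just described; it is not needed to absorb prime-power losses in the Euler product, which are in fact bounded by an absolute constant (each local factor picks up only $1+O(|\mathfrak{p}|^{-2})$ beyond the main term $1+2\lambda(\mathfrak{p})/|\mathfrak{p}|$). The $\log(X)^{\eps}$ does not come from a divisor average either; it arises in the Euler-product step from the sum over $\mathfrak{c}\mid\mathfrak{z}^{-1}l$, i.e.\ from the arithmetic function
$\psi(\mathfrak{a})=|\mathfrak{a}|\sum_{\mathfrak{c}\mid\mathfrak{a}}\bigl(\prod_{\mathfrak{p}^{\nu}\Vert\mathfrak{c}}O(\nu^{2})\bigr)/\bigl(\phi(\mathfrak{a}/\mathfrak{c})|\mathfrak{c}|^{2}\bigr)$ evaluated at $\mathfrak{a}=\mathfrak{z}^{-1}l$, which is $\ll(\log\log|\mathfrak{z}^{-1}l|)^{O(1)}\ll\log(X)^{\eps}$. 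Your reduction to the Euler-product inequality and the local-factor analysis are otherwise sound, but without the pointwise $4^{s}$ bound the argument does not close.
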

\begin{proof}
  First, write $\mathfrak{z}^{-1} m = \mathfrak{a} \mathfrak{c}
  \mathfrak{m}$
  and factor $\mathfrak{m}$ as a product of prime powers
  $\mathfrak{p}_i^{a_i}$
  with $|\mathfrak{p}_i| > z$;
  since $|\mathfrak{m}| \leq  X$, we have
  \[
  \sum a_i \log(z) 
  \leq \sum a_i \log|\mathfrak{p}_i|
  = \log |\mathfrak{m}|
  \leq \log(X)
  = s \log(z),
  \]
  so that our assumption $\lambda(\mathfrak{p}_i^{a_i}) \leq a_i + 1
  \leq 2^{a_i}$ implies
  $\lambda(\mathfrak{m})
  \leq 2^{\sum a_i}
  \leq 2^s$.
  Writing
  $\mathfrak{z}^{-1}n = \mathfrak{b} \mathfrak{c} \mathfrak{n}$,
  we find similarly that $\lambda(\mathfrak{n}) \leq 2^s$.
  Since $\gcd(\mathfrak{a} \mathfrak{c},\mathfrak{m})
  = \gcd(\mathfrak{b} \mathfrak{c}, \mathfrak{n}) =
  \mathfrak{o}$,
  we obtain
  $\lambda^0(m) \lambda^0(n)
  =\lambda(\mathfrak{a} \mathfrak{c}) \lambda(\mathfrak{b}
  \mathfrak{c})
  \lambda(\mathfrak{m}) \lambda(\mathfrak{n})
  \leq 4^s \lambda(\mathfrak{a} \mathfrak{c})
  \lambda(\mathfrak{b} \mathfrak{c})$.
  By the definition of $B(y,z)$
  and the inequality $\phi (\mathfrak{a} \mathfrak{b} )
  \geq \phi (\mathfrak{a} ) \phi (\mathfrak{b} )$,
  the LHS of \eqref{eq:few-small-primes} is thus
  \begin{equation}\label{eq:few-small-primes-1}
    \leq 4^s B(y,z)
    \sum_{
      \substack{
        \mathfrak{c} |\mathfrak{z}^{-1} l \\
        \mathfrak{p} | \mathfrak{c}
        \implies |\mathfrak{p}| \leq z
      }
    }
    \frac{
      |\mathfrak{z}^{-1}l|
    }{\phi(\mathfrak{c}^{-1} \mathfrak{z}^{-1}l ) |\mathfrak{c}|^2}
    \mathop{
      \sum_{|\mathfrak{a} \mathfrak{c}| \leq y}
      \sum_{|\mathfrak{b} \mathfrak{c}| \leq y}}_{
      \mathfrak{p} | \mathfrak{a} \mathfrak{b}
      \implies |\mathfrak{p}| \leq z}
    \frac{
      \lambda(\mathfrak{a \mathfrak{c}})
      \lambda(\mathfrak{b \mathfrak{c}})
    }
    {
      \phi(\mathfrak{a}) \phi(\mathfrak{b})
    }.
  \end{equation}
  For $\mathfrak{c}$ as in
  \eqref{eq:few-small-primes-1},
  the multiplicativity of $\lambda$ and $\phi$ implies that
  \begin{equation}\label{eq:few-small-primes-1a}
    \mathop{
      \sum_{|\mathfrak{a} \mathfrak{c}| \leq y}
      \sum_{|\mathfrak{b} \mathfrak{c}| \leq y}}_{
      \mathfrak{p} | \mathfrak{a} \mathfrak{b}
      \implies |\mathfrak{p}| \leq z}
    \frac{
      \lambda(\mathfrak{a \mathfrak{c}})
      \lambda(\mathfrak{b \mathfrak{c}})
    }
    {
      \phi(\mathfrak{a}) \phi(\mathfrak{b})
    }
    \leq
    \left(
      \prod_{|\mathfrak{p}| \leq z}
      \sum_{k \geq 0}
      \frac{
        \lambda(\mathfrak{p}^{k+v_\mathfrak{p}(\mathfrak{c})})
      }{
        \phi(\mathfrak{p}^k)
      }
    \right)^2,
  \end{equation}
  where $v_\mathfrak{p}(\mathfrak{c})$ denotes the order to
  which
  $\mathfrak{p}$ divides $\mathfrak{c}$.
  We rewrite
  \begin{equation}\label{eq:few-small-primes-2}
    \sum_{k \geq 0}
    \frac{\lambda(\mathfrak{p}^k)}{\phi(\mathfrak{p}^k)}
    = \left( 1 + \frac{\lambda(\mathfrak{p})}{|\mathfrak{p}|}
    \right)
    \left( 1 +
      \frac{
        \frac{\lambda (\mathfrak{p} )}{ \phi (\mathfrak{p} )}
        - \frac{\lambda (\mathfrak{p} ) }{ \lvert \mathfrak{p}
          \rvert}
        + \sum _{k \geq 2 }
        \frac{\lambda (\mathfrak{p} ^k )}{\phi (\mathfrak{p} ^k )}
      }{
        1 + \frac{\lambda (\mathfrak{p} )}{| \mathfrak{p} |}
      }
    \right).
  \end{equation}
  Using the inequalities $\lambda(\mathfrak{p}^k) \leq k+1$
  and $|\mathfrak{p}| \geq 2$
  and writing $q = |\mathfrak{p}|$ for clarity, we compute
  \begin{align*}
    \frac{\lambda (\mathfrak{p} )}{ \phi (\mathfrak{p} )}
    - \frac{\lambda (\mathfrak{p} ) }{ \lvert \mathfrak{p}
      \rvert}
    + \sum _{k \geq 2 }
    \frac{\lambda (\mathfrak{p} ^k )}{\phi (\mathfrak{p} ^k )}
    &\leq
    \frac{2}{q(q-1)}
    + \sum _{k \geq 2}
    \frac{k + 1}{q^{k-1}(q-1)}
    \\
    &=
    q^{-2} \left(
      2(1-q^{-1})^{-1}
      + 2 ( 1-q^{-1})^{-2}
      + (1 - q^{-1})^{-3}
    \right) \\
    &\leq
    20 q^{-2},
  \end{align*}
  so that \eqref{eq:few-small-primes-2} implies
  \begin{equation}
    \sum_{k \geq 0}
    \frac{\lambda(\mathfrak{p}^k)}{\phi(\mathfrak{p}^k)}
    \leq \left( 1 + \frac{\lambda(\mathfrak{p})}{|\mathfrak{p}|}
    \right)
    \left( 1 + \frac{20}{|\mathfrak{p}|^2} \right).
  \end{equation}
  If $\nu \geq 1$, then (writing $q = |\mathfrak{p}|$)
  \begin{align*}
    \sum_{k \geq 0}
    \frac{\lambda(\mathfrak{p}^{k+\nu})}{\phi(\mathfrak{p}^k)}
    &\leq
    \nu + 1 + \sum_{k  \geq 1}
    \frac{\nu + k + 1}{q^{k-1}(q-1)}
    \\
    &=
    1 + \nu \left( 1 + q^{-1} (1-q^{-1})^{-2} \right)
    + q^{-1}(1-q^{-1})^{-2} \\
    &\leq 3 \nu + 3.      
  \end{align*}
  Substituting these bounds
  into  \eqref{eq:few-small-primes-1}
  and \eqref{eq:few-small-primes-1a},
  the LHS of \eqref{eq:few-small-primes}
  is
  \begin{equation}\label{eq:few-small-primes-3}
    \begin{split}
      &\ll
      4^s B(y,z)
      \psi(\mathfrak{z}^{-1} l)
      \prod_{
        \substack{
          |\mathfrak{p}| \leq z \\
        }
      }
      \left( 1 + \frac{2 \lambda (\mathfrak{p} )}{|\mathfrak{p}|}
      \right),
      \\
      &\quad
      \text{ with }
      \psi(\mathfrak{a})
      :=
      |\mathfrak{a}|
      \sum_{\mathfrak{c} | \mathfrak{a}}
      \frac{
        \prod_{\mathfrak{p}^\nu || \mathfrak{c}}
        (3 \nu + 3)^2
      }{
        \phi(\mathfrak{a}/\mathfrak{c})
        |\mathfrak{c}|^2
      }.
    \end{split}
  \end{equation}
  The function $\psi : I_{\mathbb{F}} \rightarrow
  \mathbb{R}_{\geq 0}$
  is multiplicative.
  On a prime power $\mathfrak{p}^a$
  with $a \geq 1$
  and $|\mathfrak{p}| = q \geq 2$
  it takes the value
  \[
  \psi(\mathfrak{p}^k)
  = \frac{1}{1-q^{-1}}
  + \frac{9}{q^a}
  \left( (a+1)^2
    + \frac{1}{1-q^{-1}}
    \sum_{i=1}^{a-1}
    \frac{(i+1)^2}{q^i}
  \right)
  \leq 1 + 10^6 q^{-1}.
  \]
  Since $\prod_{\mathfrak{p}|\mathfrak{a}}
  (1 + |\mathfrak{p}|^{-1}) \ll \log \log |\mathfrak{a}|$,
  it follows that $\psi(\mathfrak{a})
  \ll \log \log(\mathfrak{a})^{10^6}$.
  If $|\mathfrak{z}^{-1} l| > X$, then the LHS of
  \eqref{eq:few-small-primes}
  is zero; if otherwise $|\mathfrak{z}^{-1} l| \leq X$,
  then
  $\psi(\mathfrak{z}^{-1} \mathfrak{l})
  \ll \log(X)^{\eps}$.
  Thus \eqref{eq:few-small-primes} follows
  from \eqref{eq:few-small-primes-3}.
\end{proof}

By Lemma \ref{lem:many-small-primes} and Lemma
\ref{lem:few-small-primes}, we see that Theorem
\ref{thm:essential-sums} follows from sufficiently
strong bounds for the quantity $B(y,z)$
given by \eqref{eq:defn-B-of-y-z};
the following lemma reduces such bounds to a classical
sieving
problem.
\begin{definition}\label{defn:sifted-sums}
  For a region $\mathcal{R} \subset \mathbb{F}_{\infty}
  \cong \mathbb{R}^d$,
  an ideal $\mathfrak{x} \subset \mathbb{F}$,
  a finite set $\mathcal{P}$ of
  primes
  in $\mathfrak{o}$ and a collection
  $(\Omega _{\mathfrak{p} } ) _{\mathfrak{p} \in  \mathcal{P} }$
  of sets of residue classes $\Omega_\mathfrak{p} \subset
  \mathfrak{x}/
  \mathfrak{p} \mathfrak{x}$,
  define the \emph{sifted set}
  \begin{equation}
    \mathcal{S}(\mathcal{R},\mathfrak{x},(\Omega_\mathfrak{p}))
    := \{n \in \mathfrak{x} \cap \mathcal{R} :
    n \notin \Omega_\mathfrak{p} \pod{\mathfrak{p} \mathfrak{x}}
    \text{ for all } \mathfrak{p} \in \mathcal{P}
    \}.
  \end{equation}
  Define also for any $Q \geq 1$ the quantity
  \begin{equation}\label{eq:H-defn}
    H((\Omega_\mathfrak{p}),Q)
    =
    \sum_{
      \substack{
        |\mathfrak{q}| \leq Q \\
        \mathfrak{p} | \mathfrak{q}
        \implies \mathfrak{p} \in \mathcal{P} 
      }
    }
    \prod_{\mathfrak{p}|\mathfrak{q}}
    \frac{\# \Omega_\mathfrak{p} }{|\mathfrak{p}| - \#
      \Omega_\mathfrak{p} }.
  \end{equation}
\end{definition}
\begin{lemma}\label{lem:reduce-classical-sieve-problem}
  Let $(\mathfrak{a},\mathfrak{b},\mathfrak{c}) \in
  \mathcal{Z}$.
  Choose an element $r \in \mathfrak{c} \mathfrak{z}$
  so that $r \equiv 0 \pod{\mathfrak{a} \mathfrak{c}
    \mathfrak{z}}$
  and $r = - l \pod{\mathfrak{b} \mathfrak{c} \mathfrak{z}}$,
  and define the region
  \begin{equation}\label{eq:defn-region-R-r}
    \mathcal{R}_r
    = \{x - r  |
    x \in \mathcal{R}_{X,\mathfrak{z}}  \}.
  \end{equation}
  Let
  $\mathfrak{x} = \mathfrak{a} \mathfrak{b} \mathfrak{c}
  \mathfrak{z}$
  and let
  $\mathcal{P}$ denote the set of odd
  primes $\mathfrak{p}$ in $\mathfrak{o}$
  of norm $|\mathfrak{p}| \leq z$.
  Then there exists a collection of sets of residue classes
  $(\Omega_\mathfrak{p})_{\mathfrak{p} \in \mathcal{P}}$
  with $\Omega_\mathfrak{p} \subset \mathfrak{x} / \mathfrak{p}
  \mathfrak{x}$
  such that
  \begin{equation}\label{eq:defn-little-omega}
    \# \Omega_\mathfrak{p}
    :=
    \begin{cases}
      1 & \mathfrak{p} | \mathfrak{a} \mathfrak{b} \mathfrak{c}^{-1} \mathfrak{z}^{-1} l \\
      2 & \text{otherwise}
    \end{cases}
  \end{equation}
  and
  \begin{equation}\label{eq:antidivisibility-implies-congruences}
    \# (\mathfrak{z}_{\mathfrak{a},\mathfrak{b},\mathfrak{c}}
    \cap \mathcal{R}_{X,\mathfrak{z}})
    \leq \# \mathcal{S}(\mathcal{R}_r, \mathfrak{x},
    (\Omega_\mathfrak{p})).
  \end{equation}
\end{lemma}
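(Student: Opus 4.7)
The plan is to realize the condition ``$n$ has $z$-datum $(\mathfrak{a},\mathfrak{b},\mathfrak{c})$'' as a count of shifted lattice points in $\mathfrak{x}$ subject to finitely many residue exclusions, and then read off the sizes $\#\Omega_\mathfrak{p}$ from the overlap structure of those exclusions. First I would verify that $r$ exists. The compatibility condition for the two stated congruences modulo $\gcd(\mathfrak{a}\mathfrak{c}\mathfrak{z},\mathfrak{b}\mathfrak{c}\mathfrak{z}) = \mathfrak{c}\mathfrak{z}$ (using $\gcd(\mathfrak{a},\mathfrak{b}) = \mathfrak{o}$) reduces to $l \in \mathfrak{c}\mathfrak{z}$, equivalently $\mathfrak{c} \mid \mathfrak{z}^{-1}l$, which is built into $(\mathfrak{a},\mathfrak{b},\mathfrak{c}) \in \mathcal{Z}$. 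By CRT the solution $r$ is unique modulo $\mathfrak{x} = \mathfrak{a}\mathfrak{b}\mathfrak{c}\mathfrak{z}$ and automatically lies in $\mathfrak{c}\mathfrak{z}$.

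Next I would show that the translation by $r$ injects $\mathfrak{z}_{\mathfrak{a},\mathfrak{b},\mathfrak{c}} \cap \mathcal{R}_{X,\mathfrak{z}}$ into $\mathfrak{x} \cap \mathcal{R}_r$. The divisibilities $\mathfrak{a}\mathfrak{c} \mid \mathfrak{z}^{-1}m$ and $\mathfrak{b}\mathfrak{c} \mid \mathfrak{z}^{-1} n$ pin $n$ down modulo $\mathfrak{a}\mathfrak{c}\mathfrak{z}$ and modulo $\mathfrak{b}\mathfrak{c}\mathfrak{z}$; combined with the defining congruences of $r$, the shifted element lands in both of these ideals, hence in their intersection $\mathfrak{x}$, and the region containment is by the definition of $\mathcal{R}_r$.

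The residual constraint — that the $z$-parts of $m$ and $n$ equal $\mathfrak{a}\mathfrak{c}$ and $\mathfrak{b}\mathfrak{c}$ exactly, not just divide them — is what I would encode in $(\Omega_\mathfrak{p})$ via case analysis on how $\mathfrak{p}$ relates to $\mathfrak{a}\mathfrak{b}\mathfrak{c}$. When $\mathfrak{p}$ is coprime to $\mathfrak{a}\mathfrak{b}\mathfrak{c}$, the reduction $\mathfrak{x}/\mathfrak{p}\mathfrak{x} \to \mathfrak{z}/\mathfrak{p}\mathfrak{z}$ is an isomorphism, and the two exclusions $\mathfrak{p} \nmid n$ and $\mathfrak{p} \nmid m$ pick out two distinct residue classes unless $\mathfrak{p} \mid \mathfrak{z}^{-1}l$, in which case they collapse to one; this yields $\#\Omega_\mathfrak{p} \in \{1,2\}$ exactly as predicted. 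When $\mathfrak{p} \mid \mathfrak{a}$ (and symmetrically $\mathfrak{p} \mid \mathfrak{b}$), the condition $n - r \in \mathfrak{x}$ already forces $v_\mathfrak{p}(m) \geq v_\mathfrak{p}(\mathfrak{a}\mathfrak{c})$, so the only thing left to forbid is the single residue class making $v_\mathfrak{p}(m)$ strictly larger; the companion condition on $n$ is either automatic or, if it would be impossible, excluded by the assumption $(\mathfrak{a},\mathfrak{b},\mathfrak{c}) \in \mathcal{Z}$. Primes dividing $\mathfrak{c}$ are handled similarly, yielding one forbidden class. Tallying the cases gives precisely the dichotomy in \eqref{eq:defn-little-omega}.

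The main obstacle is the careful $\mathfrak{p}$-adic valuation bookkeeping at primes dividing $\mathfrak{x}$, where the induced map $\mathfrak{x}/\mathfrak{p}\mathfrak{x} \to \mathfrak{z}/\mathfrak{p}\mathfrak{z}$ is the zero map rather than an isomorphism. In that regime the exact-power conditions on $v_\mathfrak{p}(m)$ or $v_\mathfrak{p}(n)$ must be phrased one level finer, as nonzero residue conditions inside the cyclic quotient $\mathfrak{p}^{v_\mathfrak{p}(\mathfrak{x})}\mathfrak{z}/\mathfrak{p}^{v_\mathfrak{p}(\mathfrak{x})+1}\mathfrak{z} \cong \mathfrak{o}/\mathfrak{p}$, and checking that the resulting forbidden class pulls back to exactly one class in $\mathfrak{x}/\mathfrak{p}\mathfrak{x}$ requires tracking $v_\mathfrak{p}(r)$ and $v_\mathfrak{p}(r+l)$ precisely against $v_\mathfrak{p}(\mathfrak{a}\mathfrak{c}\mathfrak{z})$ and $v_\mathfrak{p}(\mathfrak{b}\mathfrak{c}\mathfrak{z})$.
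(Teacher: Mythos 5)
Your overall strategy coincides with the paper's: translate by $r$, observe that membership in $\mathfrak{z}_{\mathfrak{a},\mathfrak{b},\mathfrak{c}}$ forces $\zeta := n-r \in \mathfrak{x}$ together with the exclusion of at most two residue classes modulo $\mathfrak{p}\mathfrak{x}$ for each $\mathfrak{p} \in \mathcal{P}$, and count those classes by cases. Your treatment of the existence of $r$, of the inclusion into $\mathfrak{x}\cap\mathcal{R}_r$, and of the primes with $\mathfrak{p}\nmid\mathfrak{a}\mathfrak{b}\mathfrak{c}$ and with $\mathfrak{p}\mid\mathfrak{a}\mathfrak{b}$ is sound. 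The gap is the sentence ``Primes dividing $\mathfrak{c}$ are handled similarly, yielding one forbidden class.'' For $\mathfrak{p}\mid\mathfrak{c}$ with $\mathfrak{p}\nmid\mathfrak{a}\mathfrak{b}$, the two exact-power conditions (on the $z$-parts of $n$ and of $m=n+l$) are \emph{both} genuine conditions on $\zeta$ modulo $\mathfrak{p}\mathfrak{x}$ --- neither is automatic nor impossible --- and they exclude the classes of $-r$ and of $-r-l$ respectively. These coincide if and only if $l\in\mathfrak{p}\mathfrak{c}\mathfrak{z}$, i.e.\ if and only if $v_\mathfrak{p}(\mathfrak{z}^{-1}l)>v_\mathfrak{p}(\mathfrak{c})$; when $v_\mathfrak{p}(\mathfrak{z}^{-1}l)=v_\mathfrak{p}(\mathfrak{c})$ (which is permitted, since $(\mathfrak{a},\mathfrak{b},\mathfrak{c})\in\mathcal{Z}$ only gives $\mathfrak{c}\mid\mathfrak{z}^{-1}l$) they are distinct and $\#\Omega_\mathfrak{p}=2$. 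A concrete instance: $\mathbb{F}=\mathbb{Q}$, $\mathfrak{z}=\mathbb{Z}$, $l=3$, $(\mathfrak{a},\mathfrak{b},\mathfrak{c})=((1),(1),(3))$, where the conditions $9\nmid n$ and $9\nmid n+3$ exclude two distinct classes of $3\mathbb{Z}/9\mathbb{Z}$. This is exactly what \eqref{eq:defn-little-omega} records, since for such $\mathfrak{p}$ the condition $\mathfrak{p}\mid\mathfrak{a}\mathfrak{b}\mathfrak{c}^{-1}\mathfrak{z}^{-1}l$ is the strict inequality $v_\mathfrak{p}(\mathfrak{z}^{-1}l)>v_\mathfrak{p}(\mathfrak{c})$ --- a valuation comparison your tally never performs.

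This is not merely cosmetic. Taking $\#\Omega_\mathfrak{p}=1$ where two classes are available still yields the inclusion \eqref{eq:antidivisibility-implies-congruences} (excluding fewer classes only enlarges the sifted set), but it fails to establish the size assertion \eqref{eq:defn-little-omega}, which is the substantive content of the lemma: it is what makes the lower bound for $H((\Omega_\mathfrak{p}),z)$ in Corollary \ref{cor:bound-for-B-y-z} carry the factor $\phi(\mathfrak{a}\mathfrak{b}\mathfrak{c}^{-1}\mathfrak{z}^{-1}l)/|\mathfrak{a}\mathfrak{b}\mathfrak{c}^{-1}\mathfrak{z}^{-1}l|$ rather than something involving all of $\mathfrak{z}^{-1}l$. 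With only one excluded class at each prime $\mathfrak{p}\mid\mathfrak{c}$ where two are required, $H$ shrinks and hence $B(y,z)$ grows by a factor of order $2^{\#\{\mathfrak{p}\mid\mathfrak{c}\,:\,v_\mathfrak{p}(\mathfrak{c})=v_\mathfrak{p}(\mathfrak{z}^{-1}l)\}}$, which for $\mathfrak{c}$ close to a squarefree $\mathfrak{z}^{-1}l$ is a divisor-type quantity exceeding any fixed power of $\log X$ --- precisely the $\tau(l)$ loss that the paper's refinement of Holowinsky's shifted-sum bound is designed to remove. To close the gap you must run the case $\mathfrak{p}\mid\mathfrak{c}$, $\mathfrak{p}\nmid\mathfrak{a}\mathfrak{b}$ with the same two-class bookkeeping as the case $\mathfrak{p}\nmid\mathfrak{a}\mathfrak{b}\mathfrak{c}$, tracking whether the two forbidden classes merge via the criterion $l\in\mathfrak{p}\mathfrak{c}\mathfrak{z}$.
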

\begin{proof}
  Indeed, let $(\mathfrak{a},\mathfrak{b},\mathfrak{c})
  \in \mathcal{Z}$,
  so that $\mathfrak{c} | \mathfrak{z}^{-1} l$
  and $\gcd(\mathfrak{a},\mathfrak{b}) = \mathfrak{o}$.
  Let $n \in \mathfrak{\mathfrak{z}}$.
  Then $n$ belongs to
  $\mathfrak{z}_{\mathfrak{a},\mathfrak{b},\mathfrak{c}}$
  if and only if
  \begin{enumerate}
  \item $n \in \mathfrak{a} \mathfrak{c} \mathfrak{z}$,
  \item $n + l \in \mathfrak{b} \mathfrak{c} \mathfrak{z}$,
  \item $\mathfrak{p} \nmid \mathfrak{z}^{-1} n / \mathfrak{a}
    \mathfrak{c}$ for each prime $\mathfrak{p}$ with norm
    $|\mathfrak{p}| \leq z$,
    and
  \item
    $\mathfrak{p} \nmid \mathfrak{z}^{-1} (n + l)
    / \mathfrak{b} \mathfrak{c}$
    for each prime $\mathfrak{p}$ with norm
    $|\mathfrak{p}| \leq z$.
  \end{enumerate}
  If $n \in
  \mathfrak{z}_{\mathfrak{a},\mathfrak{b},\mathfrak{c}}$, then
  conditions (1)--(2) assert that $n - r \in \mathfrak{a}
  \mathfrak{b} \mathfrak{c} \mathfrak{z}$, while conditions
  (3)--(4) assert (slightly more than) that for each prime
  $\mathfrak{p}$ with $|\mathfrak{p}| \leq z$, the number $n -
  r \in \mathfrak{a} \mathfrak{b} \mathfrak{c} \mathfrak{z} $
  does not belong to a certain collection $\Omega_\mathfrak{p}
  \subset \mathfrak{a} \mathfrak{b} \mathfrak{c} \mathfrak{z} /
  \mathfrak{p} \mathfrak{a} \mathfrak{b} \mathfrak{c}
  \mathfrak{z}$ of residue classes.  Precisely, let $\zeta \in
  \mathfrak{a} \mathfrak{b} \mathfrak{c} \mathfrak{z}$ and $n
  = \zeta + r$.
  \begin{itemize}
  \item Suppose $\mathfrak{p} | \mathfrak{a}$,
    $\mathfrak{p} \nmid \mathfrak{b}$.
    Let $\zeta_1 :=
    (\mathfrak{a} \mathfrak{b} \mathfrak{c} \mathfrak{z}
    / \mathfrak{p} \mathfrak{a}
    \mathfrak{b} \mathfrak{c} \mathfrak{z}
    \xrightarrow{\cong} \mathfrak{a} \mathfrak{c} \mathfrak{z} / \mathfrak{p}
    \mathfrak{a} \mathfrak{c} \mathfrak{z})^{-1}(-r)$.
    Then (3) holds iff $\zeta + r \notin \mathfrak{p} \mathfrak{a}
    \mathfrak{c} \mathfrak{z}$
    iff $\zeta - \zeta_1 \notin \mathfrak{p} \mathfrak{a}
    \mathfrak{b} \mathfrak{c} \mathfrak{z}$,
    while (4) holds iff
    $\zeta + r + l \notin \mathfrak{p} \mathfrak{b}
    \mathfrak{c} \mathfrak{z}$
    iff 
    (since $\zeta \in \mathfrak{a} \mathfrak{b} \mathfrak{c} \mathfrak{z}
    \subset \mathfrak{p} \mathfrak{b} \mathfrak{c} \mathfrak{z}$)
    $r + l \notin \mathfrak{p} \mathfrak{b}
    \mathfrak{c} \mathfrak{z}$
    iff $\mathfrak{p} \mathfrak{b} \mathfrak{z} \nmid \frac{r 
      + l }{\mathfrak{c} }$ iff (since
    $(\mathfrak{p},\mathfrak{b})=1$
    and $r + l \in \mathfrak{b} \mathfrak{c}$)
    $r + l \notin \mathfrak{p} \mathfrak{c} \mathfrak{z}$;
    we may take $\Omega_\mathfrak{p} = \{\zeta_1\}$,
    $\# \Omega_\mathfrak{p} = 1$.
  \item If $\mathfrak{p} \nmid \mathfrak{a}$,
    $\mathfrak{p} | \mathfrak{b}$, then we may similarly
    take $\#\Omega_\mathfrak{p} = 1$.
  \item The case $\mathfrak{p} | \mathfrak{a},
    \mathfrak{p} | \mathfrak{b}$ does not occur
    because $(\mathfrak{a},\mathfrak{b}) = 1$.
  \item Suppose $\mathfrak{p} \nmid \mathfrak{a} \mathfrak{b}$.
    Let $\zeta_1 :=
    (\mathfrak{a} \mathfrak{b} \mathfrak{c} \mathfrak{z}
    / \mathfrak{p} \mathfrak{a}
    \mathfrak{b} \mathfrak{c} \mathfrak{z}
    \xrightarrow{\cong} \mathfrak{a} \mathfrak{c} \mathfrak{z} / \mathfrak{p}
    \mathfrak{a} \mathfrak{c} \mathfrak{z})^{-1}(-r)$,
    $\zeta_2 :=
    (\mathfrak{a} \mathfrak{b} \mathfrak{c} \mathfrak{z}
    / \mathfrak{p} \mathfrak{a}
    \mathfrak{b} \mathfrak{c}  \mathfrak{z}
    \xrightarrow{\cong} \mathfrak{b} \mathfrak{c} \mathfrak{z}
    / \mathfrak{p}
    \mathfrak{b} \mathfrak{c} \mathfrak{z})^{-1}(-r-l)$.
    Then (3) holds iff
    $\zeta + r \notin \mathfrak{p} \mathfrak{a} \mathfrak{c}
    \mathfrak{z}$
    iff $\zeta - \zeta_1 \notin
    \mathfrak{p} \mathfrak{a} \mathfrak{b} \mathfrak{c}
    \mathfrak{z}$,
    while (4) holds iff
    $\zeta + r + l \notin \mathfrak{p} \mathfrak{b}
    \mathfrak{c} \mathfrak{z}$ iff
    $\zeta - \zeta_2 \notin
    \mathfrak{p} \mathfrak{a} \mathfrak{b} \mathfrak{c} \mathfrak{z}$.
    We may therefore take $\Omega_\mathfrak{p} =
    \{\zeta_1,\zeta_2\}$.
    We have $\zeta_1 \equiv \zeta_2 \pod{\mathfrak{p} \mathfrak{a}
      \mathfrak{b} \mathfrak{c} \mathfrak{z}}$ iff $l \in \mathfrak{p}
    \mathfrak{c} \mathfrak{z}$, in which case $\# \Omega_{\mathfrak{p}} = 1$;
    if $l \notin \mathfrak{p} \mathfrak{c} \mathfrak{z}$,
    then $\# \Omega_{\mathfrak{p}} = 2$.
  \end{itemize}
  Thus $n \mapsto n - r$ gives an inclusion
  $\mathfrak{z}_{\mathfrak{a},\mathfrak{b},\mathfrak{c}}
  \cap \mathcal{R} 
  \hookrightarrow 
  \mathcal{S}(\mathcal{R}_r,\mathfrak{a} \mathfrak{b}
  \mathfrak{c} \mathfrak{z}, (\Omega_\mathfrak{p}))$, and the $\#
  \Omega_\mathfrak{p}$ are as claimed.
\end{proof}

The large sieve machinery alluded to above
allows us to show the following, the proof of which
we postpone to a later subsection;
the proof is independent of what follows
in this subsection,
so there is no circularity in our arguments.
\begin{proposition}\label{prop:large-sieve-as-a-sieve}
  Let $\mathfrak{x}$, $\mathcal{P}$,
  and
  $(\Omega_\mathfrak{p})_{\mathfrak{p} \in \mathcal{P}}$
  be as in Definition \ref{defn:sifted-sums}.
  Let $\mathcal{R}$ be the region $\mathcal{R}_{X,\mathfrak{x}}$
  as in \eqref{eq:defn-R-X-z}
  or a translate thereof.
  There exists a positive constant
  $c_2(\mathbb{F}) > 0$
  such that for $X > c_2(\mathbb{F})$
  and $Q \geq 1$,
  we have
  \begin{equation}\label{eq:large-sieve-as-sieve}
    \mathcal{S}(\mathcal{R},\mathfrak{x},
    (\Omega_\mathfrak{p}))
    \ll
    \frac{
      X + Q^2
    }
    {
      H((\Omega_\mathfrak{p}),Q)
    }.
  \end{equation}
\end{proposition}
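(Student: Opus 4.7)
The plan is to deduce the proposition by the classical Bombieri--Montgomery--Selberg argument, with the analytic large sieve replaced by its number-field analog. Specifically, I will combine (i) an analytic large sieve for exponential sums on the lattice $\mathfrak{x}$ in $\mathbb{F}_\infty$, (ii) a Farey dissection of the dual torus $\mathbb{F}_\infty/(\mathfrak{x}\mathfrak{d})^{-1}$, and (iii) Selberg's elementary inequality bounding the size of a sifted set by a sum of character twists.

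For (i), set $S(\alpha) = \sum_n a_n \, e_{\mathbb{F}}(\alpha n)$ for coefficients $(a_n)_{n \in \mathfrak{x} \cap \mathcal{R}}$. The inequality I need is
\[
\sum_{r=1}^R |S(\alpha_r)|^2 \ll_{\mathbb{F}} (X + \delta^{-d}) \sum_n |a_n|^2,
\]
valid whenever $\alpha_1, \dotsc, \alpha_R$ admit representatives modulo $(\mathfrak{x}\mathfrak{d})^{-1}$ whose pairwise differences are $\delta$-balanced, meaning that after multiplication by a suitable $\eta \in \mathfrak{o}_+^*$ every Minkowski coordinate has absolute value at least $\delta$. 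This is the Schaal--Huxley form of the large sieve for totally real fields, proved by applying the Beurling--Selberg extremal majorant in each of the $d$ Minkowski coordinates and then Poisson summation against the dual lattice $\mathfrak{x}\mathfrak{d}$. Since $\mathcal{R}$ is a cube of side $L = (\norm(\mathfrak{x}) X)^{1/d}$ and the covolume of $\mathfrak{x}$ is proportional to $\norm(\mathfrak{x})$, the resulting product bound $(L + \delta^{-1})^d$ normalizes to the claimed $X + \delta^{-d}$.

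For (ii), primitive additive characters of $\mathfrak{x}/\mathfrak{q}\mathfrak{x}$ correspond to Farey points in $(\mathfrak{x}\mathfrak{d}\mathfrak{q})^{-1}/(\mathfrak{x}\mathfrak{d})^{-1}$. The difference of two such distinct points with moduli $\mathfrak{q}, \mathfrak{q}'$ of norm $\leq Q$ is a nonzero $\beta \in (\mathfrak{x}\mathfrak{d}\mathfrak{q}\mathfrak{q}')^{-1} \setminus (\mathfrak{x}\mathfrak{d})^{-1}$, whence $|\norm(\beta)| \geq (\norm(\mathfrak{x}\mathfrak{d}) Q^2)^{-1}$. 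By Dirichlet's unit theorem, some $\eta \in \mathfrak{o}_+^*$ makes every Minkowski coordinate of $\eta \beta$ comparable to $|\norm(\beta)|^{1/d}$, so the Farey set is $Q^{-2/d}$-balanced. Applying (i) with $\delta \asymp Q^{-2/d}$ and $a_n = \mathbf{1}_{\mathcal{S}}(n)$ yields
\[
\sum_{\mathfrak{q}} \sum_{\chi \text{ prim mod } \mathfrak{q}} |\widehat{\mathcal{S}}(\chi)|^2 \ll_{\mathbb{F}} (X + Q^2) |\mathcal{S}|,
\]
where $\widehat{\mathcal{S}}(\chi) = \sum_{n \in \mathcal{S}} \chi(n)$ and the outer sum runs over squarefree $\mathfrak{q}$ with prime factors in $\mathcal{P}$ and $\norm(\mathfrak{q}) \leq Q$.

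For (iii), Selberg's argument yields
\[
|\mathcal{S}|^2 \prod_{\mathfrak{p}|\mathfrak{q}} \frac{\#\Omega_\mathfrak{p}}{|\mathfrak{p}| - \#\Omega_\mathfrak{p}} \leq \sum_{\chi \text{ prim mod } \mathfrak{q}} |\widehat{\mathcal{S}}(\chi)|^2,
\]
derived by expanding $|\mathcal{S}|$ as a sum over the $\prod_{\mathfrak{p}|\mathfrak{q}}(|\mathfrak{p}| - \#\Omega_\mathfrak{p})$ admissible residue classes modulo $\mathfrak{q}$ in $\mathfrak{x}/\mathfrak{q}\mathfrak{x}$, applying Cauchy--Schwarz, and invoking additive orthogonality, exactly as in \cite[\S 7.4]{MR1942691}. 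Summing over $\mathfrak{q}$ and dividing by $|\mathcal{S}|$ then gives $|\mathcal{S}| \cdot H((\Omega_\mathfrak{p}), Q) \ll X + Q^2$. The main obstacle is the balancing in step (ii): the unit $\eta$ depends on $\beta$, so one cannot reduce to a single fixed lattice, but the analytic inequality in (i) is manifestly $\mathfrak{o}_+^*$-equivariant, and $\mathcal{R}$, $\mathcal{S}$, and the indices $\#\Omega_\mathfrak{p}$ are all unit-invariant --- the same compatibility already exploited in reducing Theorem \ref{thm:essential-sums} to Theorem \ref{thm:shifted-sums-boxes}.
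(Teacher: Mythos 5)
Your proposal follows essentially the same route as the paper: duality plus a product majorant and Poisson summation to prove an additive large sieve over $\mathbb{F}_\infty/\mathfrak{x}^{-1}\mathfrak{d}^{-1}$, a norm lower bound for differences of Farey points of conductor $\leq Q$, and Montgomery's arithmetic inequality (your step (iii), which is exactly the bound $h(\mathfrak{q})\,\lvert\sum_n a_n\rvert^2 \leq \lvert\mathfrak{q}\rvert\,\|a[\mathfrak{q}]_\#\|_2^2$ the paper imports from Kowalski's axiomatic formulation) to convert the large sieve into the sifting bound. The choice of Beurling--Selberg versus Fej\'{e}r-type $\sinc^2$ majorants is immaterial here since only the order of magnitude is needed.

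The one place where your write-up has a real soft spot is the one you yourself flag: the spacing hypothesis in step (i). As you state it, each pairwise difference $\beta$ is ``$\delta$-balanced'' by a unit $\eta_\beta$ depending on the pair, but the majorant-plus-Poisson proof of (i) ends with a packing argument --- counting how many points of the family can land in a single translate of the dual box modulo $\mathfrak{x}^{-1}\mathfrak{d}^{-1}$ --- and that count requires the points to be separated in one \emph{fixed} metric. Equivariance of the inequality under a single global unit does not let you apply different units to different pairs, so the argument as written does not close. The repair (and what the paper does) is to drop the per-pair balancing entirely: since every representative of a nonzero difference class lies in the fractional ideal $(\mathfrak{x}\mathfrak{d}\mathfrak{q}\mathfrak{q}')^{-1}$, its norm is $\geq (\norm(\mathfrak{x})\Delta_{\mathbb{F}}Q^2)^{-1}$, and by pigeonhole at least one Minkowski coordinate is $\geq (\norm(\mathfrak{x})\Delta_{\mathbb{F}}Q^2)^{-1/d}$; that is, the family is $\delta$-spaced in the $\ell^\infty$ metric on the torus with $\delta^{-d}\asymp Q^2$, which is precisely the separation a cube-packing count needs. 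The unit action is then used only once, to replace the box $\mathcal{R}$ by an $\mathfrak{o}_+^*$-translate with comparable side lengths, so that $\prod_i(\lvert a_i-b_i\rvert+\delta^{-1})\ll \vol(\mathcal{R})+\delta^{-d}$. With that substitution your argument goes through and coincides with the paper's.
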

\begin{proof}
  See \S\ref{sec:reduct-from-class}.
\end{proof}
As a consequence, we deduce the following
bound for $B(y,z)$.
\begin{corollary}\label{cor:bound-for-B-y-z}
  Let $c_2(\mathbb{F}) > 0$ be as in Proposition
  \ref{prop:large-sieve-as-a-sieve}.
  Then for $X > c_2(\mathbb{F})y^2$,
  the quantity
  $B(y,z)$ given by \eqref{eq:defn-B-of-y-z}
  satisfies
  \[
  B(y,z)
  \ll \frac{ X + y^2 z^2 }{\log(z)^2}.
  \]
\end{corollary}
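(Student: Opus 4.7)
The plan is to combine the sieve reduction of Lemma \ref{lem:reduce-classical-sieve-problem} with the large-sieve bound of Proposition \ref{prop:large-sieve-as-a-sieve}, and then estimate the resulting expression. First I would fix any $(\mathfrak{a},\mathfrak{b},\mathfrak{c}) \in \mathcal{Z}_{\leq y}$ and set $\mathfrak{x} = \mathfrak{a}\mathfrak{b}\mathfrak{c}\mathfrak{z}$; since $\max(|\mathfrak{a}\mathfrak{c}|, |\mathfrak{b}\mathfrak{c}|) \leq y$ forces $|\mathfrak{a}\mathfrak{b}\mathfrak{c}| \leq y^2/|\mathfrak{c}| \leq y^2$, the translate $\mathcal{R}_r$ of the box $\mathcal{R}_{X,\mathfrak{z}}$ is also a translate of $\mathcal{R}_{X',\mathfrak{x}}$ for $X' = X \cdot |\mathfrak{z}|/|\mathfrak{x}| = X/|\mathfrak{a}\mathfrak{b}\mathfrak{c}|$, and the hypothesis $X > c_2(\mathbb{F}) y^2$ is precisely what guarantees $X' > c_2(\mathbb{F})$. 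Proposition \ref{prop:large-sieve-as-a-sieve} then supplies, for any $Q \geq 1$,
\[
\#\mathcal{S}(\mathcal{R}_r, \mathfrak{x}, (\Omega_\mathfrak{p})) \ll \frac{X/|\mathfrak{a}\mathfrak{b}\mathfrak{c}| + Q^2}{H((\Omega_\mathfrak{p}), Q)}.
\]

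Next, dividing through by the denominator $|\mathfrak{z}^{-1}l|/(|\mathfrak{c}|^2 \phi(\mathfrak{a}\mathfrak{b}\mathfrak{c}^{-1}\mathfrak{z}^{-1}l))$ in \eqref{eq:defn-B-of-y-z} and using the trivial bound $\phi(\mathfrak{a}\mathfrak{b}\mathfrak{c}^{-1}\mathfrak{z}^{-1}l) \leq |\mathfrak{a}\mathfrak{b}\mathfrak{c}^{-1}\mathfrak{z}^{-1}l| = |\mathfrak{a}\mathfrak{b}| \cdot |\mathfrak{z}^{-1}l|/|\mathfrak{c}|$, the resulting prefactor telescopes neatly to $|\mathfrak{a}\mathfrak{b}\mathfrak{c}|$, and one finds
\[
B(y,z) \ll \frac{|\mathfrak{a}\mathfrak{b}\mathfrak{c}|\bigl(X/|\mathfrak{a}\mathfrak{b}\mathfrak{c}| + Q^2\bigr)}{H((\Omega_\mathfrak{p}), Q)} = \frac{X + |\mathfrak{a}\mathfrak{b}\mathfrak{c}|\, Q^2}{H((\Omega_\mathfrak{p}), Q)} \leq \frac{X + y^2 Q^2}{H((\Omega_\mathfrak{p}), Q)}.
\]
Taking $Q = z$ yields the numerator of the desired bound, and the whole proof reduces to establishing the uniform lower bound $H((\Omega_\mathfrak{p}), z) \gg \log(z)^2$.

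For this last step I would exploit that, by \eqref{eq:defn-little-omega}, only the $O(\log X)$ primes dividing $\mathfrak{a}\mathfrak{b}\mathfrak{c}^{-1}\mathfrak{z}^{-1}l$ (together with $\mathfrak{p}\mid 2$) have $\#\Omega_\mathfrak{p} = 1$ rather than $2$, so that up to a bounded multiplicative constant the local factors $\#\Omega_\mathfrak{p}/(|\mathfrak{p}|-\#\Omega_\mathfrak{p})$ are comparable to $2/|\mathfrak{p}|$; Mertens' theorem for $\mathbb{F}$ (giving $\sum_{|\mathfrak{p}| \leq z} 1/|\mathfrak{p}| = \log\log z + O(1)$) then delivers the Euler product $\prod_{|\mathfrak{p}| \leq z} (1 + \#\Omega_\mathfrak{p}/(|\mathfrak{p}|-\#\Omega_\mathfrak{p})) \asymp \log(z)^2$, and a Selberg-sieve (or Rankin-trick) argument recovers the same order of magnitude from the truncated dual sum $H((\Omega_\mathfrak{p}), z)$.

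I expect the main obstacle to be exactly this final step: verifying that the restriction $|\mathfrak{q}| \leq z$ in the definition of $H$ loses at most a bounded factor relative to the unrestricted Euler product, since the bulk of the product comes from squarefree $\mathfrak{q}$ with many prime factors of sizable norm. In the classical dimension-$2$ setting this is standard but delicate, requiring a careful analysis in the spirit of Halberstam--Richert transplanted to the number field $\mathbb{F}$; the use of the field's Mertens estimate and Chebyshev-type bound $\sum_{|\mathfrak{p}| \leq z} \log|\mathfrak{p}|/|\mathfrak{p}| \asymp \log z$ replaces the rational-prime inputs of Holowinsky's original argument, with everything else proceeding formally as in \cite[Thm 2]{holowinsky-2008}.
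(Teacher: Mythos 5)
Your reduction to the sieve problem is exactly the paper's: the same application of Lemma \ref{lem:reduce-classical-sieve-problem} and Proposition \ref{prop:large-sieve-as-a-sieve} with $X' = X/\norm(\mathfrak{a}\mathfrak{b}\mathfrak{c})$ and $Q = z$, and the same bookkeeping showing $\norm(\mathfrak{a}\mathfrak{b}\mathfrak{c}) \leq y^2$. The gap is in your final step. The uniform lower bound $H((\Omega_\mathfrak{p}),z) \gg \log(z)^2$ is false. For the primes $\mathfrak{p} \mid \mathfrak{m}$, $\mathfrak{m} := \mathfrak{a}\mathfrak{b}\mathfrak{c}^{-1}\mathfrak{z}^{-1}l$, the local factor $1 + \#\Omega_\mathfrak{p}/(\norm(\mathfrak{p}) - \#\Omega_\mathfrak{p})$ equals $\norm(\mathfrak{p})/(\norm(\mathfrak{p})-1)$ rather than $\norm(\mathfrak{p})/(\norm(\mathfrak{p})-2)$; each such factor is indeed within a bounded constant of the generic one, but the \emph{product} of the ratios over $\mathfrak{p} \mid \mathfrak{m}$ is $\asymp \prod_{\mathfrak{p}\mid\mathfrak{m}}(1 - \tfrac{1}{\norm(\mathfrak{p})-1}) \asymp \phi(\mathfrak{m})/\norm(\mathfrak{m})$, which is \emph{not} bounded below: if $\mathfrak{m}$ is divisible by all primes of norm up to $\asymp \log X$ (and $\norm(\mathfrak{m})$ can be as large as $y^2 X$, so this can happen), it is $\asymp 1/\log\log X$. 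The correct sieve lower bound, which is what the paper imports from Halberstam--Richert, is $H((\Omega_\mathfrak{p}),z) \gg \frac{\phi(\mathfrak{m})}{\norm(\mathfrak{m})}\log(z)^2$, and your argument as written would only yield $B(y,z) \ll (X + y^2 z^2)\log\log(X)/\log(z)^2$, not the stated bound.

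The error enters because you applied the trivial bound $\phi(\mathfrak{m}) \leq \norm(\mathfrak{m})$ to the prefactor too early. Keep the prefactor from \eqref{eq:defn-B-of-y-z} exactly as $\norm(\mathfrak{c})^2\phi(\mathfrak{m})/\norm(\mathfrak{z}^{-1}l) = \norm(\mathfrak{a}\mathfrak{b}\mathfrak{c})\cdot\phi(\mathfrak{m})/\norm(\mathfrak{m})$; then dividing by $H \gg \frac{\phi(\mathfrak{m})}{\norm(\mathfrak{m})}\log(z)^2$ makes the two factors of $\phi(\mathfrak{m})/\norm(\mathfrak{m})$ cancel exactly, giving $B(y,z) \ll (X + \norm(\mathfrak{a}\mathfrak{b}\mathfrak{c})z^2)/\log(z)^2 \leq (X+y^2z^2)/\log(z)^2$. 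This cancellation is the whole reason the denominator in the definition \eqref{eq:defn-B-of-y-z} carries $\phi(\mathfrak{a}\mathfrak{b}\mathfrak{c}^{-1}\mathfrak{z}^{-1}l)$ rather than its norm. The secondary issue you flag --- that truncating to $\norm(\mathfrak{q}) \leq z$ in $H$ loses only a bounded factor --- is real but is handled by the cited Halberstam--Richert argument transplanted to $\mathbb{F}$; it is not where the difficulty lies.
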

\begin{proof}
  Let $(\mathfrak{a},\mathfrak{b},\mathfrak{c})
  \in \mathcal{Z}_{\leq y}$
  and let the region $\mathcal{R}_r$,
  the ideal $\mathfrak{x} = \mathfrak{a}
  \mathfrak{b} \mathfrak{c} \mathfrak{z}$,
  the set of primes $\mathcal{P}$ and
  the collection of sets of residue classes
  $(\Omega_\mathfrak{p})$
  be as in Lemma \ref{lem:reduce-classical-sieve-problem},
  so that \eqref{eq:antidivisibility-implies-congruences} holds.
  Then $|\mathfrak{x}| \leq y^2 |\mathfrak{z}|$,
  so that $X > c_2(\mathbb{F}) y^2$
  implies $X' > c_2(\mathbb{F})$
  with $X' := |\mathfrak{x}^{-1} \mathfrak{z}| X$;
  the hypothesis of Proposition
  \ref{prop:large-sieve-as-a-sieve}
  are then satisfied (taking $X'$ in place of $X$),
  and setting $Q = z$ we obtain
  \[
  \# (\mathfrak{z}_{\mathfrak{a},\mathfrak{b},\mathfrak{c}}
  \cap\mathcal{R}_{X,\mathfrak{z}})
  \ll
  \frac{|\mathfrak{x}^{-1} \mathfrak{z} | X +
    z^2}{H((\Omega_\mathfrak{p}),z)}.
  \]
  Set $\mathfrak{m} = \mathfrak{a} \mathfrak{b}
  \mathfrak{c}^{-1} \mathfrak{z}^{-1} l$
  (see \eqref{eq:defn-little-omega}).
  The lower bound
  \[
  H((\Omega_\mathfrak{p}),z)
  \gg_{\mathbb{F}}
  \frac{\phi(\mathfrak{m})}{|\mathfrak{m}|}\log(z)^2
  \]
  is standard when $\mathbb{F}=\mathbb{Q} $
  and follows in general
  from the arguments of \cite[pp55-59, Thm 2]{MR1836967}
  upon redefining ``$P(z)$'' to be the product
  of all prime ideals of norm up to $z$, replacing
  every sum over integers (resp. primes)
  satisfying some inequalities
  by the analogous sum over ideals (resp. prime ideals)
  with norms satisfying the analogous inequalities,
  and replacing the Riemann zeta function $\zeta$ by
  the Dedekind zeta function $\zeta_{\mathbb{F}}$.
  Thus recalling the definition \eqref{eq:defn-B-of-y-z}
  of $B(y,z)$,
  we obtain
  \[
  B(y,z)
  \ll \frac{|\mathfrak{x}|^{-1} X + z^2}{
    \frac{\phi(\mathfrak{m})}{|\mathfrak{m}|}
    \log(z)^2}
  \frac{|\mathfrak{c}|^2
    \phi(\mathfrak{m})}{|\mathfrak{z}^{-1} l|}
  = \frac{ X +
    |\mathfrak{a} \mathfrak{b} \mathfrak{c}|
    z^2}{\log(z)^2}.
  \]
  Since $|\mathfrak{a} \mathfrak{b} \mathfrak{c}|
  \leq y^2$, we deduce
  the claimed bound.
\end{proof}

\begin{proof}[Proof of Theorem \ref{thm:shifted-sums-boxes}]
  Let $y,z$ be given by \eqref{eq:z-small-power-of-X},
  \eqref{eq:y-small-power-of-X}
  with $\alpha \in (0,\tfrac{1}{2})$ and $s = \alpha \log \log(X)$.
  We eventually
  (i.e., as $X \rightarrow \infty$)
  have
  $X > c_2(\mathbb{F}) y^2$
  and $2 \leq z \leq y \leq X$.
  Thus the hypotheses of Lemma
  \ref{lem:many-small-primes},
  Lemma \ref{lem:few-small-primes}
  and Corollary \ref{cor:bound-for-B-y-z}
  are eventually satisfied,
  so we obtain
  \[
  \sum
  _{
    \substack{
      n \in \mathfrak{z} \cap \mathcal{R}_{X,\mathfrak{z}}\\
      m := n + l \in \mathfrak{z} \cap \mathcal{R}_{X,\mathfrak{z}}
    }
  }
  \lambda^0(m) \lambda^0(n)
  \ll 4^s
  \frac{X + y^2 z^2}{\log(z)^2} \log(X)^\eps
  \prod_{
    \substack{
      \norm(\mathfrak{p})
      \leq z \\
    }
  }
  \left( 1 + \frac{2 \lambda(\mathfrak{p})}{\norm(\mathfrak{p})} \right).
  \]
  We have
  $4^s = \log(X)^{\alpha \log(4)}$,
  $\log(z) \gg_\alpha \log(X)^{2-\eps}$
  and $y^2 z^2 \ll_\alpha X$,
  so letting $\alpha \rightarrow 0$
  we deduce
  the assertion of Theorem \ref{thm:shifted-sums-boxes}.
\end{proof}


\appendix
\section{Sieve bounds}\label{sec:reduct-from-class}
Inequalities of the shape \eqref{eq:large-sieve-as-sieve} (with
explicit constants) have appeared in papers of Schaal \cite[Thm
5]{MR0272745} and Hinz \cite[Satz 2]{MR871630}, but only under
additional assumptions such as $Q \gg_{\mathbb{F}}
1$, $X \gg Q^2$, and
$\Omega_\mathfrak{p} = \emptyset$ for all $\mathfrak{p} |
\mathfrak{z}$.  Although it would possible to get around such
assumptions in our intended applications (at the cost of
sacrificing the uniformity in $\mathfrak{z}$,
which is ultimately not needed), we prefer to establish a result
in which such assumptions are not present.
We neglect here the issue of the leading coefficient
of such bounds, which is important in some of the applications
of the authors just cited but not in ours; for this reason
our analysis is substantially simplified.

Our arguments in this short section are standard; we have been
influenced by the books of Davenport \cite{Dav80} and Kowalski
\cite{MR2426239}, to which we refer the reader for a discussion
of the history of these ideas. Fix a fractional ideal
$\mathfrak{x}$ of $\mathbb{F}$.  Let $\mathfrak{q}$ be an
integral ideal in $\mathbb{F}$ and $\alpha : \mathfrak{x} /
\mathfrak{q} \mathfrak{x} \to \mathbb{C}$ a function on the
group $\mathfrak{x} / \mathfrak{q} \mathfrak{x}$.  Define
$L^2(\mathfrak{x}/\mathfrak{q}\mathfrak{x})$, $\|.\|_2$ with
respect to the counting measure, and for $\psi$ in the
Pontryagin dual $(\mathfrak{x} /
\mathfrak{q}\mathfrak{x})^\wedge$, define $\alpha^\wedge(\psi) =
\sum_{\mathfrak{x}/\mathfrak{q}\mathfrak{x}} \alpha(\zeta)
\overline{\psi}(\zeta)$; then the Fourier inversion and
Plancherel formulas read
\[
\alpha = |\mathfrak{q}|^{-1} \sum_{(\mathfrak{x} / \mathfrak{q} \mathfrak{x})^\wedge}
\alpha^\wedge(\psi) \psi,
\quad
\sum_{\mathfrak{x}/\mathfrak{q}\mathfrak{x}} \lvert \alpha(\zeta) \rvert^2
=
\|\alpha \|_2^2 = \|\alpha^\wedge\|_2^2 = |\mathfrak{q}|^{-1}
\sum_{(\mathfrak{x}/\mathfrak{q}\mathfrak{x})^\wedge} |\alpha^\wedge
(\psi)|^2.\]

For a proper divisor $\mathfrak{q}'$ of $\mathfrak{q}$, the
projection $\mathfrak{x} / \mathfrak{q} \mathfrak{x}\to \mathfrak{x} /
\mathfrak{q}'\mathfrak{x}$ induces an inclusion
$L^2(\mathfrak{x}/\mathfrak{q}'\mathfrak{x}) \hookrightarrow
L^2(\mathfrak{x}/\mathfrak{q}\mathfrak{x})$.  Let
$L^2_\#(\mathfrak{x}/\mathfrak{q}\mathfrak{x})$ denote the orthogonal
complement of the span of the images of these inclusions, write
$L^2(\mathfrak{x}/\mathfrak{q}\mathfrak{x}) \ni \alpha \mapsto
\alpha_\# \in L^2_\#(\mathfrak{x}/\mathfrak{q}\mathfrak{x})$ for the
associated orthogonal projection, and let
$(\mathfrak{x}/\mathfrak{q}\mathfrak{x})^\wedge_\#$ denote the set of
characters $\psi \in (\mathfrak{x}/\mathfrak{q}\mathfrak{x})^\wedge$ that
do not factor through any proper projection $\mathfrak{x} /
\mathfrak{q}\mathfrak{x} \to \mathfrak{x} /
\mathfrak{q}'\mathfrak{x}$,
so that
\[
\|\alpha_\#\|_2^2 = \lvert \mathfrak{q} \rvert^{-1}
\sum_{(\mathfrak{x}/\mathfrak{q}\mathfrak{x})^\wedge_\#} \lvert
\alpha^\wedge (\psi) \rvert^2.\]
For $\psi \in (\mathfrak{x}/\mathfrak{q}\mathfrak{x})^\wedge_\#$
call $\mathfrak{q}$ the \emph{conductor} of $\psi$.

Let $\mathcal{R}$ be a region in $\mathbb{F}_\infty$,
$\mathcal{P}$ a finite set of primes, $Q \geq 1$ a parameter,
and $\mathcal{Q}$ the set of squarefree ideals $\mathfrak{q}$
composed of primes $\mathfrak{p} \in \mathcal{P}$ with $\lvert
\mathfrak{q}\rvert \leq Q$.  Let $V(\mathcal{R},\mathfrak{x})$
be the Hilbert space of complex-valued functions $(a_n)_n :
\mathfrak{x} \to \mathbb{C}$ supported on $\mathcal{R} \cap
\mathfrak{x}$, where for $(a_n) \in V(\mathcal{R},\mathfrak{x})$
we set $\|a\|_2^2 := \sum_n |a_n|^2$.  For $\mathfrak{q} \in
\mathcal{Q}$ define $a[\mathfrak{q}] \in L^2(\mathfrak{x} /
\mathfrak{q}\mathfrak{x})$ by the formula
$a[\mathfrak{q}](\zeta) = \sum_{n = \zeta(\mathfrak{q}
  \mathfrak{x})} a_n$.  Let $E(\cdot;\mathfrak{x},Q)$ be the
quadratic form on $V(\mathcal{R},\mathfrak{x})$ defined by
\begin{equation}\label{eq2:8}
  E((a_n);\mathfrak{x},Q)
  = \sum_{\mathfrak{q} \in \mathcal{Q}}
  \lvert \mathfrak{q}  \rvert 
  \|a[\mathfrak{q}]_\#\|_2^2
  = \sum_{\mathfrak{q} \in \mathcal{Q}}
  \sum_{(\mathfrak{x}/\mathfrak{q}\mathfrak{x})^\wedge_\#}
  \lvert a[\mathfrak{q}]^\wedge(\psi) \rvert^2,
\end{equation}
and $D(\mathcal{R},\mathfrak{x},Q)$ the squared norm of
$E(\cdot;\mathfrak{x},Q)$, i.e., the smallest non-negative real with the
property that $|E((a_n);\mathfrak{x},Q)| \leq D(\mathcal{R},\mathfrak{x},Q)
\|a\|_2^2$ for all $(a_n) \in V(\mathcal{R},\mathfrak{x})$.

Suppose that $\alpha[\mathfrak{p}](\zeta) = 0$ for (at least)
$\omega(\mathfrak{p})$ values of $\zeta$ mod $\mathfrak{p}$ for
each $\mathfrak{p} \in \mathcal{P}$, and set $h(\mathfrak{q}) =
\prod_{\mathfrak{p}|\mathfrak{q}}
\frac{\omega(\mathfrak{p})}{|\mathfrak{p} |-
  \omega(\mathfrak{p})}$ for each $\mathfrak{q} \in
\mathcal{Q}$.  An inequality due to Montgomery \cite{MR0224585}
in the $(\mathbb{F},\mathfrak{x}) = (\mathbb{Q},\mathbb{Z})$
case (refining earlier work of Linnik, R\'{e}nyi, and
Bombieri-Davenport), whose proof generalizes painlessly to the
present situation and has been formulated axiomatically by
Kowalski \cite[Lem 2.7]{MR2426239}, shows that
$h(\mathfrak{q}) \|a[\mathfrak{o}]\|_2^2 \leq \lvert
\mathfrak{q} \rvert \|a[\mathfrak{q}]_\#\|_2^2$, so
recalling from \eqref{eq:H-defn}
that $H((\Omega_\mathfrak{p}),Q) = \sum_{\mathfrak{q} \in \mathcal{Q}} h(\mathfrak{q})$ we obtain
\[
\|a[\mathfrak{o}]\|_2^2 H((\Omega_\mathfrak{p}),Q)
\leq
D(\mathcal{R},\mathfrak{x},Q) \|a\|_2^2.
\]

In the special case
that $(a_n)_n$ is the indicator function of
$\mathcal{S}(\mathcal{R},\mathfrak{x},(\Omega_\mathfrak{p}))$
for some subsets $\Omega_\mathfrak{p} \subset \mathfrak{x} /
\mathfrak{p} \mathfrak{x}$, let
$Z := \# \mathcal{S}(
\mathcal{R}, \mathfrak{x},
(\Omega_\mathfrak{p}))$,
so that
\[
\|a\|_2^2 = \sum_n \lvert a_n|^2 = Z,
\quad
\|a[\mathfrak{o}]\|_2^2
= \lvert \sum_n a_n \rvert^2
= Z^2,
\]
and $a_n = 0$ whenever
$n \in \Omega_\mathfrak{p} \pod{\mathfrak{p}}$ for any
$\mathfrak{p} \in \mathcal{P}$.
Thus
\begin{equation}\label{eq2:sieve-to-large-sieve}
  \# \mathcal{S}(\mathcal{R}, \mathfrak{x},
  (\Omega_\mathfrak{p}))
  \leq \frac{D(  \mathcal{R},\mathfrak{x}
    ,Q)}{
    H((\Omega_\mathfrak{p}),Q)
  }.
\end{equation}

In this context, an additive large sieve inequality is by
definition a bound for $D(\mathcal{R},\mathfrak{x},Q)$.
The homomorphism $\mathbb{F}_\infty /
\mathfrak{x}^{-1} \mathfrak{d}^{-1} \ni \xi \mapsto
[\mathfrak{x} \ni n \mapsto e(\Tr \xi n)] \in
\mathfrak{x}^\wedge$ ($e(x) = e^{2 \pi i x}$) induces for
integral ideals $\mathfrak{q} ' | \mathfrak{q}$ the
compatible
isomorphisms
\begin{equation*}
  \begin{CD}         
    \mathfrak{q} '^{-1} \mathfrak{x}^{-1} \mathfrak{d}^{-1}
    / \mathfrak{x}^{-1} \mathfrak{d}^{-1} @>\cong>>
    (\mathfrak{x} / \mathfrak{q}' \mathfrak{x})^\wedge\\
    @VVV  @VVV \\
    \mathfrak{q} ^{-1} \mathfrak{x}^{-1} \mathfrak{d}^{-1}
    / \mathfrak{x}^{-1} \mathfrak{d}^{-1} @>\cong>>(\mathfrak{x} / \mathfrak{q}
    \mathfrak{x})^\wedge\\
  \end{CD}
\end{equation*}
by which we regard the family
$\sqcup \{ (\mathfrak{x}/\mathfrak{q} \mathfrak{x})^\wedge_\#
: \mathfrak{q} \in \mathcal{Q} \}$ of primitive
additive characters having (squarefree) conductor up to $Q$
(and supported on the primes of $\mathcal{P}$) as a subset
$\mathcal{F} :=
\mathcal{F}(\mathfrak{x},Q) \subset \mathbb{F} / \mathfrak{x}^{-1}
\mathfrak{d}^{-1} \subset \mathbb{F}_\infty /
\mathfrak{x}^{-1} \mathfrak{d}^{-1}$
of the family of all (finite order) additive characters
on $\mathfrak{x}$,
thus
\begin{equation}\label{eq2:E-as-exp-sum}
  E((a_n);\mathfrak{x},Q) = \sum_{\xi \in \mathcal{F}(\mathfrak{x},Q)}
  \left\lvert \sum_n a_n e(\Tr \xi n) \right\rvert^2.
\end{equation}
Write $D(\mathcal{R},\mathfrak{x},\mathcal{F})$
synonymously for $D(\mathcal{R},\mathfrak{x},Q)$.
The group $\mathfrak{o}_+^*$ acts on
$\mathbb{F}_\infty$ and
$\mathbb{F}_\infty / \mathfrak{x}^{-1} \mathfrak{d}^{-1}$
by multiplication,
stabilizing $\mathfrak{x}$ and $\mathcal{F}$.
The $\ell^\infty$ metric on $\mathbb{F}_\infty$
given by $d_{\mathbb{F}_\infty}(\xi,\eta)
= \max_i |\xi_i - \eta_i|$ induces
on $\mathbb{F}_\infty / \mathfrak{x}^{-1} \mathfrak{d}^{-1}$
by
the formula
$d(\xi,\eta) := \min_{n \in \mathfrak{x}^{-1} \mathfrak{d}^{-1}}
d_{\mathbb{F}_\infty}(\xi,\eta+n)$
a metric $d$ with respect to which we call
\[\delta := \delta(\mathcal{F}(\mathfrak{x},Q)) := \min_{\xi \neq \eta \in
  \mathcal{F}(\mathfrak{x},Q)} d(\xi, \eta)\] the smallest
spacing for the family $\mathcal{F}(\mathfrak{x},Q)$
and say that $\mathcal{F}(\mathfrak{x},Q)$
is \emph{$\delta(\mathcal{F}(\mathfrak{x},Q))$-spaced}.
\begin{lemma}\label{lem:well-spaced}
  $\delta(\mathcal{F}(\mathfrak{x},Q)) \geq (|\mathfrak{x}|
  \Delta_\mathbb{F} Q^2)^{-1/[\mathbb{F}:\mathbb{Q}]}$ (here $\Delta_\mathbb{F} = |\mathfrak{d}|$ is the
  discriminant of $\mathbb{F}$).
\end{lemma}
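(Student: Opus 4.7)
The proof is a straightforward application of the basic principle that a nonzero element of a fractional ideal has absolute norm at least the norm of the ideal, combined with the AM--GM inequality.

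The plan is as follows. Take distinct $\xi, \eta \in \mathcal{F}(\mathfrak{x}, Q)$; by definition each is represented by an element of $\mathfrak{q}^{-1}\mathfrak{x}^{-1}\mathfrak{d}^{-1}/\mathfrak{x}^{-1}\mathfrak{d}^{-1}$ for some $\mathfrak{q} \in \mathcal{Q}$ of norm $\leq Q$, so lifting to $\mathbb{F}_\infty$ we may assume $\xi \in \mathfrak{q}_1^{-1}\mathfrak{x}^{-1}\mathfrak{d}^{-1}$ and $\eta \in \mathfrak{q}_2^{-1}\mathfrak{x}^{-1}\mathfrak{d}^{-1}$ with $|\mathfrak{q}_j| \leq Q$. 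For any $n \in \mathfrak{x}^{-1}\mathfrak{d}^{-1}$, the element
\[
\alpha := \xi - \eta + n
\]
lies in the fractional ideal $\mathfrak{a} := (\mathfrak{q}_1\mathfrak{q}_2)^{-1}\mathfrak{x}^{-1}\mathfrak{d}^{-1}$, and is nonzero because $\xi \neq \eta$ in the quotient $\mathbb{F}_\infty / \mathfrak{x}^{-1}\mathfrak{d}^{-1}$.

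The key step is that any nonzero $\alpha \in \mathfrak{a}$ satisfies $\mathfrak{a}^{-1}(\alpha) \subseteq \mathfrak{o}$, so $|N(\alpha)| = N(\mathfrak{a}) \cdot N(\mathfrak{a}^{-1}(\alpha)) \geq N(\mathfrak{a})$, the latter being a positive integer. Here $N(\mathfrak{a}) = (|\mathfrak{q}_1\mathfrak{q}_2| \cdot |\mathfrak{x}| \cdot \Delta_\mathbb{F})^{-1} \geq (Q^2 |\mathfrak{x}| \Delta_\mathbb{F})^{-1}$. Combined with the elementary inequality
\[
\max_i |\alpha_i| \geq \Big(\prod_i |\alpha_i|\Big)^{1/[\mathbb{F}:\mathbb{Q}]} = |N(\alpha)|^{1/[\mathbb{F}:\mathbb{Q}]},
\]
this yields $\max_i |\alpha_i| \geq (|\mathfrak{x}|\Delta_\mathbb{F} Q^2)^{-1/[\mathbb{F}:\mathbb{Q}]}$.

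Taking the infimum over $n \in \mathfrak{x}^{-1}\mathfrak{d}^{-1}$ gives $d(\xi, \eta) \geq (|\mathfrak{x}|\Delta_\mathbb{F} Q^2)^{-1/[\mathbb{F}:\mathbb{Q}]}$, and then the infimum over distinct pairs $(\xi,\eta) \in \mathcal{F}(\mathfrak{x},Q)^2$ gives the claim. There is no real obstacle; the only point requiring a moment of care is confirming nonvanishing of $\alpha$, which follows immediately from the fact that the difference $\xi - \eta$ is a nonzero class in $\mathbb{F}_\infty/\mathfrak{x}^{-1}\mathfrak{d}^{-1}$.
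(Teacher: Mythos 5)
Your proof is correct and is essentially identical to the paper's: both reduce the claim to the fact that a nonzero element $\zeta$ of the fractional ideal $\mathfrak{q}_1^{-1}\mathfrak{q}_2^{-1}\mathfrak{x}^{-1}\mathfrak{d}^{-1}$ satisfies $\prod_i|\zeta_i| = |N(\zeta)| \geq \norm(\mathfrak{q}_1^{-1}\mathfrak{q}_2^{-1}\mathfrak{x}^{-1}\mathfrak{d}^{-1}) \geq (Q^2|\mathfrak{x}|\Delta_{\mathbb{F}})^{-1}$, and then conclude via $\max_i|\zeta_i| \geq (\prod_i|\zeta_i|)^{1/[\mathbb{F}:\mathbb{Q}]}$. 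No gaps.
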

\begin{proof}
  Suppose that $\mathfrak{q}_1,\mathfrak{q}_2 \in \mathcal{Q}$,
  $\xi \in \mathfrak{q}_1^{-1} \mathfrak{x}^{-1}
  \mathfrak{d}^{-1}$,
  and
  $\eta \in \mathfrak{q}_2^{-1} \mathfrak{x}^{-1}
  \mathfrak{d}^{-1}$
  with $\xi - \eta \notin \mathfrak{x}^{-1} \mathfrak{d}^{-1}$.
  We must show,
  for any $n \in \mathfrak{x}^{-1} \mathfrak{d}^{-1}$,
  that $\zeta := \xi - \eta - n$
  satisfies $\max_i |\zeta_i| \geq (|\mathfrak{x}| \Delta_\mathbb{F}
  Q^2)^{-1/[\mathbb{F}:\mathbb{Q}]}$.
  Indeed, we have $0 \neq \zeta \in \mathfrak{q}_1^{-1} \mathfrak{q}_2^{-1}
  \mathfrak{x}^{-1} \mathfrak{d}^{-1}$,
  so that \[\prod |\xi_i - \eta_i|
  = |\xi - \eta|^{\mathbf{1}} \geq | \mathfrak{q}_1^{-1} \mathfrak{q}_2^{-1}
  \mathfrak{x}^{-1} \mathfrak{d}^{-1} | \geq \Delta_{\mathbb{F}}^{-1} |\mathfrak{x}|^{-1}
  Q^{-2}.\]
  Thus for some index $i$
  we have $|\zeta_i| \geq (|\mathfrak{x}| \Delta_\mathbb{F}
  Q^2)^{-1/[\mathbb{F}:\mathbb{Q}]}$, hence the claim.
\end{proof}

The duality principle for bilinear forms, which
asserts that a form and its transpose have the same norm,
implies that
$D(\mathcal{R},\mathfrak{x},\mathcal{F})$ is the smallest
non-negative real such that
\begin{equation}\label{eq2:6}
  \sum_{n \in \mathfrak{x} \cap \mathcal{R}}
  \left\lvert
    \sum_{\xi \in \mathcal{F}} b_\xi e(\Tr \xi n)
  \right\rvert^2
  \leq D(\mathcal{R},\mathfrak{x},\mathcal{F}) \|b\|_2^2
\end{equation}
for all $(b_\xi)_\xi : \mathcal{F} \to \mathbb{C}$, where
$\|b\|_2^2 = \sum |b_\xi|^2$.  Call a nonnegative-valued Schwarz
function $f \in \mathcal{S}( \mathbb{F}_\infty \to
\mathbb{R}_{\geq 0})$ \emph{$\mathcal{R}$-admissible} if it
satisfies $f|_{\mathcal{R}} \geq 1$,
and let $f$ be $\mathcal{R}$-admissible.
Opening the square in \eqref{eq2:6} and invoking the elementary
inequality
$|b_\xi \overline{b_\eta }| \leq \tfrac{1}{2}
(|b_\xi|^2 + |b_\eta|^2)$,
we find that
\begin{align*}
  \sum_{n \in \mathfrak{x} \cap \mathcal{R}}
  \left\lvert
    \sum_{\xi \in \mathcal{F}} b_\xi e(\Tr \xi n)
  \right\rvert^2
  &\leq
  \sum_{n \in \mathfrak{x}}
  f(n)
  \left\lvert
    \sum_{\xi \in \mathcal{F}} b_\xi e(\Tr \xi n)
  \right\rvert^2 \\
  &\leq
  \sup_{\xi \in \mathcal{F}}
  \sum_{\eta \in \mathcal{F}}
  \left\lvert \sum_{n \in \mathfrak{x}}
    f(n)
    e(\Tr n (\xi - \eta))
  \right\rvert \|b\|_2^2.
\end{align*}
Applying the Poisson summation formula,
which asserts in this context that
\begin{equation*}
  \begin{split}
    \sum_{n \in \mathfrak{x}}
    f(n)
    e(\Tr n (\xi - \eta))
    &=
    \vol(\mathbb{F}_\infty / \mathfrak{x})^{-1}
    \sum_{\mu \in \mathfrak{x}^{-1} \mathfrak{d}^{-1}}
    \hat{f}(\mu - \xi + \eta),
    \\
    &\quad
    \text{ with }
    \hat{f}(y) := \int_{\mathbb{F}_\infty} f(x)
    e(-x \cdot y) \, d y,
  \end{split}
\end{equation*}
we obtain
\begin{equation}\label{eq2:12}
  \begin{split}
    &D(\mathcal{R},\mathfrak{x},\mathcal{F})
    \leq \vol(\mathbb{F}_\infty/\mathfrak{o})^{-1} |\mathfrak{x}|^{-1}
    F(f;\mathfrak{x},\mathcal{F}),
    \\
    &\quad
    \text{ with }
    F(f;\mathfrak{x},\mathcal{F})
    :=
    \sup_{\xi \in \mathcal{F}}
    \sum_{\eta \in \mathcal{F}}
    \left\lvert \sum_{\mu \in \mathfrak{x}^{-1} \mathfrak{d}^{-1}}
      \hat{f}(\mu - \xi + \eta)
    \right\rvert.
  \end{split}
\end{equation}

\begin{lemma}\label{lem:basic-large-sieve}
  There exists a positive constant $c_2(\mathbb{F}) > 0$
  with the following
  property.
  For any rectangle $\mathcal{R} = \prod [a_i, b_i] =
  [a_1,b_1] \times \dotsb \times [a_d, b_d]$ whose volume
  $\vol(\mathcal{R}) = \prod |a_i - b_i|$ satisfies
  $\vol(\mathcal{R}) > c_2(\mathbb{F}) |\mathfrak{x}|$,
  there exists an $\mathcal{R}$-admissible function $f$
  such that
  \begin{equation}\label{eq2:15}
    F(f,\mathfrak{x},\mathcal{F}) \ll_{\mathbb{F}} \vol(\mathcal{R})
    + \delta^{-d}.
  \end{equation}
\end{lemma}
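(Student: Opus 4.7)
The plan is to construct $f$ as a tensor product of nonnegative one-dimensional extremal majorants with compactly supported Fourier transform, exploiting the narrow Fourier support to show that $F(f;\mathfrak{x},\mathcal{F})$ collapses to at most a single term per $\xi \in \mathcal{F}$. Set $\delta_0 := \min(\delta, (|\mathfrak{x}|\Delta_{\mathbb{F}})^{-1/d})$. By the classical Beurling--Selberg--Vaaler theory applied coordinatewise, for each $i \in \{1,\dotsc,d\}$ there exists a nonnegative Schwarz function $M_i \geq \mathbf{1}_{[a_i,b_i]}$ on $\mathbb{R}$ with $\operatorname{supp} \widehat{M_i} \subset [-\delta_0/2, \delta_0/2]$ and $\int M_i \ll |b_i - a_i| + \delta_0^{-1}$; setting $f(x) := \prod_i M_i(x_i)$, we have $f \geq 0$, $f|_\mathcal{R} \geq 1$, and $\hat f = \prod_i \widehat{M_i}$ is supported in a box of $\ell^\infty$-diameter strictly less than $\delta_0$.

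The crucial observation is that the lifted set $\mathcal{F} + \mathfrak{x}^{-1}\mathfrak{d}^{-1} \subset \mathbb{F}_\infty$ is $\delta_0$-spaced in $\ell^\infty$. Any two distinct lifts either differ in their $\mathcal{F}$-coordinate, in which case the definition of $\delta = \delta(\mathcal{F})$ gives distance $\geq \delta \geq \delta_0$, or they differ by a nonzero $\alpha \in \mathfrak{x}^{-1}\mathfrak{d}^{-1}$, in which case AM--GM applied to $|\norm(\alpha)| = \prod |\alpha_i| \geq \norm(\mathfrak{x}^{-1}\mathfrak{d}^{-1})^{-1} = (|\mathfrak{x}|\Delta_\mathbb{F})^{-1}$ yields $\|\alpha\|_\infty = \max_i |\alpha_i| \geq (|\mathfrak{x}|\Delta_\mathbb{F})^{-1/d} \geq \delta_0$. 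Fixing $\xi \in \mathcal{F}$, at most one pair $(\eta,\mu) \in \mathcal{F} \times \mathfrak{x}^{-1}\mathfrak{d}^{-1}$ therefore places $\mu - \xi + \eta$ in $\operatorname{supp} \hat f$, so the triangle inequality $|\hat f(y)| \leq \hat f(0) = \int f$ for nonnegative $f$ gives $F(f;\mathfrak{x},\mathcal{F}) \leq \int f$.

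It remains to bound $\int f = \prod_i \int M_i \ll \prod_i (|b_i - a_i| + \delta_0^{-1})$; in the intended application (Proposition \ref{prop:large-sieve-as-a-sieve}) the region $\mathcal{R}$ is a cube of side length $\ell = \vol(\mathcal{R})^{1/d}$ (or a translate thereof), so $\int f \ll \ell^d + \delta_0^{-d} = \vol(\mathcal{R}) + \max(\delta^{-d}, |\mathfrak{x}|\Delta_\mathbb{F})$, and choosing $c_2(\mathbb{F}) \geq \Delta_\mathbb{F}$ in the hypothesis $\vol(\mathcal{R}) > c_2(\mathbb{F})|\mathfrak{x}|$ absorbs the term $|\mathfrak{x}|\Delta_\mathbb{F}$ into $\vol(\mathcal{R})$ to yield $F(f;\mathfrak{x},\mathcal{F}) \ll_{\mathbb{F}} \vol(\mathcal{R}) + \delta^{-d}$. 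The main technical obstacle is the construction of nonnegative 1D majorants with the stated parameters, since the classical Beurling--Selberg majorants of intervals are real but not automatically nonnegative; this can be circumvented either by invoking Vaaler's explicit nonnegative constructions or by replacing $M_i$ with the square of a Beurling-type majorant at the cost of halving the admissible Fourier-support radius, losing only an absolute constant in $\int M_i$.
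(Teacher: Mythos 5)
Your argument is sound for cubes, but it does not prove the lemma as stated, which concerns an arbitrary rectangle $\mathcal{R} = \prod [a_i,b_i]$ subject only to the volume condition $\vol(\mathcal{R}) > c_2(\mathbb{F})\lvert \mathfrak{x}\rvert$. Your final bound $\int f \ll \prod_i\bigl(\lvert b_i - a_i\rvert + \delta_0^{-1}\bigr)$ is \emph{not} $\ll \vol(\mathcal{R}) + \delta_0^{-d}$ for elongated rectangles: already for $d=2$ with $\lvert b_1-a_1\rvert = V^2$, $\lvert b_2-a_2\rvert = 1$ and $\delta_0^{-1}=V$, the product is $\asymp V^3$ while $\vol(\mathcal{R})+\delta_0^{-2} \asymp V^2$. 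The cross terms $\prod_{i\in S}\lvert b_i-a_i\rvert\cdot\delta_0^{-(d-\#S)}$ are controlled by $\vol(\mathcal{R})+\delta_0^{-d}$ only when the side lengths are comparable. The missing ingredient is the reduction to balanced rectangles: since $\mathfrak{x}$ and $\mathcal{F}$ are stable under $\mathfrak{o}_+^*$, one has $F(\eta f;\mathfrak{x},\mathcal{F}) = F(f;\mathfrak{x},\mathcal{F})$ for every totally positive unit $\eta$, so one may replace $\mathcal{R}$ by a representative of its $\mathfrak{o}_+^*$-orbit satisfying $\lvert a_i-b_i\rvert \asymp_{\mathbb{F}} \lvert a_j-b_j\rvert$ for all $i,j$, after which your product expansion does yield $\vol(\mathcal{R})+\delta_0^{-d}$. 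This balancing step is exactly how the paper opens its proof; without it (or without explicitly weakening the lemma to balanced boxes, which is all Proposition \ref{prop:large-sieve-as-a-sieve} requires), the stated inequality is not established.

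Modulo that repair, your route is a legitimate alternative to the paper's. The paper takes $f$ to be a product of $\sinc^2$ factors whose Fourier transform is supported in the dual rectangle $\prod [c_i,d_i]$ with $\lvert c_i-d_i\rvert = \lvert a_i-b_i\rvert^{-1}$; the volume hypothesis plus balancedness forces each $\mu$-sum to have at most one term, and a cube-packing count of the $\delta$-spaced $\eta$'s then gives the factor $\prod (1+\delta^{-1}\lvert c_i-d_i\rvert)$, which combined with $\lVert \hat f\rVert_\infty \le (\pi^2/4)^d\prod\lvert a_i-b_i\rvert$ produces $\prod(\lvert a_i-b_i\rvert+\delta^{-1})$. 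You instead shrink the Fourier support to width $\delta_0 = \min(\delta,(\lvert\mathfrak{x}\rvert\Delta_{\mathbb{F}})^{-1/d})$ so that the \emph{entire} double sum over $(\eta,\mu)$ collapses to a single term, paying with the larger mass $\int M_i \ll \lvert b_i-a_i\rvert + \delta_0^{-1}$; the bookkeeping is cleaner but the final product is the same shape, so the balancedness issue is identical in both approaches. Two minor points: your worry about nonnegativity of the Selberg majorant of an interval is unfounded, since any majorant of the (nonnegative) indicator $\mathbf{1}_{[a,b]}$ is automatically nonnegative, so no squaring trick is needed; and with $\operatorname{supp}\widehat{M_i}\subset[-\delta_0/2,\delta_0/2]$ the support box has $\ell^\infty$-diameter exactly $\delta_0$ rather than strictly less, so a $\delta_0$-spaced set can meet it in up to $O_d(1)$ points rather than one --- harmless for the implied constant, but worth either shrinking the support or stating the count as $O_d(1)$.
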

\begin{proof}
  For a unit $\eta \in \mathfrak{o}_+^*$ and an
  $\mathcal{R}$-admissible function $f$, define the $\eta
  \mathcal{R}$-admissible function $\eta f$ by the formula $\eta
  f(\eta x) = f(x)$.  Since $\mathfrak{x}$ and $\mathcal{F}$ are
  $\mathfrak{o}_+^*$-stable, we have $F(\eta
  f;\mathfrak{x},\mathcal{F}) = F(f;\mathfrak{x},\mathcal{F})$.
  Therefore we may assume that $\mathcal{R}$ is chosen so that
  $|a_i - b_i| \asymp |a_j - b_j|$ for all $i,j \in
  \{1,\dotsc,d\}$, where the implied constant depends only upon
  $\mathbb{F}$.
  Now the formula
  \[
  f(x)
  =\left( \frac{\pi^2}{8} \right)^d
  \prod_{i=1}^d
  \sinc^2 \left(
    \frac{x_i - \frac{a_i+b_i}{2}}{ 2 |a_i - b_i|}
  \right),
  \quad \sinc(x) = \frac{\sin(\pi x)}{\pi x}
  \]
  defines an $\mathcal{R}$-admissible function $f$
  whose Fourier transform is supported
  in the \emph{dual rectangle}
  \[
  \widehat{\mathcal{R}}
  = \prod [c_i ,d_i],
  \quad
  |c_i - d_i| = |a_i - b_i|^{-1}, \quad c_i = - d_i < 0 < d_i
  \]
  and satisfies
  $\|\hat{f}\|_\infty \leq (\pi^2/4)^d \prod |a_i - b_i|$.
  Since $|a_i - b_i| \asymp |a_j - b_j|$ for all $i,j$, there
  exists a constant $c_2(\mathbb{F}) > 0$, depending only upon $\mathbb{F}$,
  such that $\vol(\mathcal{R}) > c_2(\mathbb{F}) | \mathfrak{x}|$
  implies that $|a_i - b_i| > \tfrac{1}{2}
  \Delta_{\mathbb{F}}^{1/d} |\mathfrak{x}|^{1/d}$ for each $i$.
  If we assume now (as we may) that the latter assertion holds,
  then any translate of the dual rectangle
  $\widehat{\mathcal{R}}$ contains at most one element of the
  dual lattice $\mathfrak{x}^{-1} \mathfrak{d}^{-1}$, so that
  each sum over $\mu$ in \eqref{eq2:12} contains at most one
  nonzero term, thus
  \[
  \sum_{\eta \in \mathcal{F}}
  \left\lvert \sum_{\mu \in \mathfrak{x}^{-1} \mathfrak{d}^{-1}}
    \hat{f}(\mu - \xi + \eta)
  \right\rvert
  \leq \|\hat{f}\|_\infty \cdot \#  \Bigl\{ \eta \in \mathcal{F}
  \, : \,
  \mu - \xi + \eta \in \widehat{\mathcal{R}} + \mathfrak{x}^{-1} \mathfrak{d}^{-1} \Bigr\}.
  \]
  The above set is a $\delta$-spaced subset of
  $\widehat{\mathcal{R}} \pmod {\mathfrak{x}^{-1} \mathfrak{d}^{-1}}$; a cube-packing argument shows that any such set has
  cardinality at most $\prod (1 + \lfloor \delta^{-1} |c_i -
  d_i|\rfloor )$, so that
  \begin{equation}\label{eq2:13}
    F(f,\mathfrak{x},\mathcal{F})
    \leq
    \left( \frac{\pi^2}{4} \right)^d
    \prod_{i=1}^d |a_i - b_i| (1 + \lfloor \delta^{-1} |c_i -
    d_i|\rfloor )
    \ll  \prod_{i=1}^d ( |a_i - b_i| + \delta^{-1} ).
  \end{equation}
  Since $|a_i - b_i| \asymp |a_j - b_j|$,
  we obtain $F(f,\mathfrak{x},\mathcal{F}) \ll
  \vol(\mathcal{R}) + \delta^{-d}$, as desired.
\end{proof}

\begin{proof}[Proof of Proposition
  \ref{prop:large-sieve-as-a-sieve}]
  Take $c_2(\mathbb{F})$ as in Lemma
  \ref{lem:basic-large-sieve},
  and suppose that $X > c_2(\mathbb{F})$
  and $Q \geq 1$.
  Then $\vol(\mathcal{R}_{X,\mathfrak{z}}) > c_2(\mathbb{F})
  |\mathfrak{z}|$, so the hypotheses
  of Lemma \ref{lem:basic-large-sieve}
  are satisfied.
  The claimed bound \eqref{eq:large-sieve-as-sieve}
  follows immediately from \eqref{eq2:sieve-to-large-sieve},
  Lemma \ref{lem:well-spaced},
  equation \eqref{eq2:12}
  and Lemma \ref{lem:basic-large-sieve}.
\end{proof}

\section{Bounds for special
  functions}\label{sec:bounds-an-integral}
In this self-contained section we establish the technical lemmas
that were needed in the proof
of Lemma \ref{lem:j-bound-legit}.
First, recall \cite{MR0178117} that the Gauss hypergeometric function
$F = {}_2 F_1$
is defined for $\Re(c) > \Re(b) > 0$
and $|\arg(1-z)| < \pi$
by the integral
\begin{equation*}
  F
  \left( \efrac{a,b}{c} ;z
  \right)
  = \frac{\Gamma(c)}{\Gamma(b) \Gamma(c-b)}
  \int_0^1 \frac{t^{b-1} (1-t)^{c-b-1}}{(1-z t)^a}
\end{equation*}
where $\arg(1-z t) = 0$ for $z \in \mathbb{R}_{<0}$,
and for $|z| < 1$
and arbitrary $a,b,c$ by the series
\[
F
\left( \efrac{a,b}{c} ;z
\right)
=
\sum_{n=0}^\infty \frac{(a)_n (b)_n}{(c)_n}
\frac{z^n}{n!},
\quad (a)_n := a(a+1)(a+2)  \dotsb (a+n-1),
\]
which implies $F \left( \efrac{a,b}{c}; 0 \right) = 1$.
It satisfies
the differential equation
\[
x(1-x) y'' + (c-(a+b+1) x) y' - a b y = 0,
\quad
y(x) := {}_2 F_1 \left( \efrac{a,b}{c};x \right)
\]
for $x \notin \{1,\infty\}$.

\begin{lemma}\label{lem:technical-1}
  Let $x \in \mathbb{R}_{\geq 0}$, $\nu \in i \mathbb{R} \cup (-1/2,1/2)$
  and $s \in \mathbb{C}$ with $\Re(s) \geq 1/2$.
  Then
  \[
  \left\lvert
    {}_2 F_1
    \left( \efrac{\tfrac{1}{2} + \nu, \tfrac{1}{2} - \nu }{
        s}; - x \right)
  \right\rvert
  \leq 1.
  \]
\end{lemma}
\begin{proof}
  Fix $\nu$ and $s$ as above, and let
  \[
  F_s(x) = {}_2 F _1
  \left( \efrac{\tfrac{1}{2} - \nu, \tfrac{1}{2} + \nu}{s}; - x \right)
  \]
  for $x \in \mathbb{R}_{\geq 0}$.
  Then $F_s$ satisfies the differential equation
  \begin{equation}\label{eq:diffeq}
    x (1 + x ) F_s'' (x ) + (s + 2 x ) F_s ' (x )
    + \lambda F_s (x ) = 0
    \quad \text{ with } \lambda = \tfrac{1}{4} + r^2 > 0.
  \end{equation}
  Note that since $\{\overline{\tfrac{1}{2} + i r},
  \overline{\tfrac{1}{2} - i r}\}=\{\tfrac{1}{2} + i r,
  \tfrac{1}{2} - i r\}$,
  we have
  $\overline{F_s} = F_{\bar{s}}$
  and $\overline{F_s}' = F_{\bar{s}}'$.
  Let $f$ be a smooth function on $\mathbb{R}$
  and $H = |F_s|^2 + f |F_s'|^2$, so that
  \begin{equation}
    H' = F_s' F_{\bar{s} }
    + F _s F _{\bar{s} }'
    + f ' |F_s|^2
    + f (F_s'' F _{\bar{s} }'
    + F _s ' F _{\bar{s} } '').
  \end{equation}
  By the differential equation (\ref{eq:diffeq}), we have
  \[
  H' =
  \left( F _s ' F _{\bar{s}}
    + F _{s} F _{\bar{s} }'
  \right)
  \left( 1 - f \frac{\lambda }{x(1+x)}
  \right)
  + |F_s'|^2
  \left( f ' - f \frac{s + \bar{s} + 4 x }{ x (1 + x )} \right).
  \]
  Taking $f(x) = x(1+x)/\lambda$ gives
  \[
  H'(x) = \frac{1 - 2 \Re(s) - 2 x}{\lambda}
  |F_s'|^2 (x),
  \]
  so that $H'(x) \leq 0$ for $\Re(s) \geq 1/2$ and $x \geq 0$.
  Since $f(0) = 0$
  and $f(x) \geq 0$ for $x \geq 0$,
  we obtain
  \[
  |F_s|^2(x) \leq H(x) \leq H(0) = |F_s|^2(0) = 1,
  \]
  as desired.
\end{proof}
\begin{lemma}\label{lem:technical-2}
  Let $\nu \in i \mathbb{R} \cup (-\onehalf,\onehalf)$
  and $s \in \mathbb{C}$ with $\Re(s) \geq 1$.
  Then
  \[
  \left\lvert
    \frac{
      \Gamma(s + \nu )
      \Gamma(s- \nu)
    }{
      \Gamma(s+\onehalf) \Gamma(s-\onehalf)
    }
  \right\rvert \leq 1.
  \]
\end{lemma}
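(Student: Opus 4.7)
The plan is to reduce the desired inequality, via Gauss's summation theorem, to the statement $|F(1)| \geq 1$ for a suitable hypergeometric function $F$, and then to adapt the monotonicity argument of Lemma \ref{lem:technical-1} to the compact interval $[0,1]$ in place of $[0,\infty)$.

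Concretely, I would apply Gauss's formula ${}_2F_1(a,b;c;1) = \Gamma(c)\Gamma(c-a-b)/[\Gamma(c-a)\Gamma(c-b)]$ with $a = \tfrac12+\nu$, $b = \tfrac12-\nu$, $c = s+\tfrac12$; the convergence condition $\Re(c-a-b)>0$ becomes $\Re(s) > \tfrac12$, which is satisfied, and the identity reads
\[
{}_2F_1\!\left(\efrac{\tfrac12+\nu,\,\tfrac12-\nu}{s+\tfrac12};1\right) = \frac{\Gamma(s+\tfrac12)\Gamma(s-\tfrac12)}{\Gamma(s+\nu)\Gamma(s-\nu)}.
\]
It therefore suffices to prove $|F(1)| \geq 1$ for $F(z) := {}_2F_1(\tfrac12+\nu,\tfrac12-\nu;c;z)$ with $c := s + \tfrac12$. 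The function $F$ satisfies $z(1-z)F'' + (c-2z)F' - \lambda F = 0$ with $\lambda := \tfrac14 - \nu^2 > 0$ (the hypothesis on $\nu$ giving strict positivity). Setting $H(z) := |F(z)|^2 + g(z)|F'(z)|^2$ and choosing $g$ to kill the cross-term in $H'$ (via the ODE, exactly as in Lemma \ref{lem:technical-1}) forces $g(z) = -z(1-z)/\lambda$, whereupon a short computation gives
\[
H'(z) = \frac{2\Re(c) - 2z - 1}{\lambda}\,|F'(z)|^2.
\]
Since $\Re(c) \geq 3/2$, the numerator is bounded below by $2-2z \geq 0$ on $[0,1]$, so $H$ is nondecreasing on that interval.

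The conclusion now follows from the sign of $g$: on $[0,1]$ we have $g(z) \leq 0$, so $H(z) \leq |F(z)|^2$, while $g(0) = 0$ gives $H(0) = 1$. Therefore $|F(z)|^2 \geq H(z) \geq H(0) = 1$ for all $z \in [0,1)$, and sending $z \to 1^-$ (using continuity of $F$ at $z=1$, which is guaranteed by Gauss's formula since $\Re(c-a-b) > 0$) yields $|F(1)| \geq 1$, which is the claim. The main conceptual point, and the principal departure from Lemma \ref{lem:technical-1}, is precisely the sign of $g$: there $g \geq 0$ translated monotonicity of $H$ into an \emph{upper} bound on $|F|^2$, whereas here $g \leq 0$ turns it into a \emph{lower} bound. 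The hypothesis $\Re(s) \geq 1$ (equivalently $\Re(c) \geq 3/2$) is precisely what is needed for $H'\geq 0$ to hold throughout $[0,1]$, with equality permitted only at $z=1$ in the boundary case $\Re(s)=1$.
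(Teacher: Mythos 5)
Your proof is correct, and it is a genuine variant of the paper's. Both arguments use the same Lyapunov functional $H = |F|^2 + g|F'|^2$ driven by the hypergeometric ODE, but they run it on different hypergeometric functions. The paper applies Gauss's evaluation (labelled ``Kummer's first formula'' in its proof) with parameters $(\nu+\tfrac12,\,\nu-\tfrac12;\,s+\nu)$, whose value at $1$ is the Gamma quotient itself; there $ab = \nu^2-\tfrac14 = -\lambda$, so the ODE carries a $+\lambda y$ term, the auxiliary weight $f = x(1-x)/\lambda$ comes out \emph{nonnegative}, and one obtains an upper bound exactly as in Lemma~\ref{lem:technical-1}. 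You instead take parameters $(\tfrac12+\nu,\,\tfrac12-\nu;\,s+\tfrac12)$, whose value at $1$ is the \emph{reciprocal} of the Gamma quotient; since $ab = \tfrac14-\nu^2 = \lambda > 0$, the ODE carries $-\lambda y$, the cross-term-cancelling weight is forced to be $g = -z(1-z)/\lambda \leq 0$, and both the sign of $H'$ and the direction of the final inequality flip, yielding the lower bound $|F| \geq 1$. Your version is marginally cleaner algebraically because $a+b+1 = 2$ is $\nu$-free; the paper's version is what yields the slightly broader range $\Re(s) \geq \tfrac12 + \Re(\nu)$ mentioned in the remark after the lemma, which your choice $c = s+\tfrac12$ cannot recover, since in that parametrization $H'$ changes sign near $z=1$ as soon as $\Re(s) < 1$.
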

\begin{proof}
  Recall that Kummer's first formula asserts
  \begin{equation}\label{eq:kummer}
    \frac{
      \Gamma(s + \nu )
      \Gamma(s- \nu)
    }{
      \Gamma(s+\onehalf) \Gamma(s-\onehalf)
    }
    =
    \lim_{x \rightarrow 1^-}
    F_{\nu,s}(x),
    \quad
    F_{\nu,s}(x) :=
    F \left(
      \efrac{
        \nu + \tfrac{1}{2},\nu - \tfrac{1}{2}
      }{
        s + \nu 
      };
      x \right).
  \end{equation}
  Write $\sigma = \Re(s)$ and $u = \Re(\nu)$.
  Take
  $H = |F_{\nu,s}|^2 + f |F'_{\nu,s}|^2$
  for a smooth function $f$.
  The differential equation
  \begin{equation*}
    x(1-x) F_{\nu,s}''(x)
    + ( s + \nu - (2 \nu + 1) x) F_{\nu,s}'(x)
    + \lambda F_{\nu,s}(x)
    =0,
  \end{equation*}
  with $\lambda = \tfrac{1}{4} -  \nu^2 >0$,
  implies that
  \begin{equation*}
    \begin{split}
      H'
      &= \left(
        F_{\nu,s}' F_{\bar{\nu }, \bar{s} }
        + F_{\nu,s} F_{\bar{\nu }, \bar{s} }'
      \right)
      \left( 1 - f \frac{\lambda }{x(1-x)} \right) \\
      &\quad 
      + |F_{\nu,s}'|^2
      \left( f ' - f
        \frac{
          2 \sigma + 2 u - 2(2 u + 1) x
        }{
          x (1-x)
        }
      \right).
    \end{split}
  \end{equation*}
  Taking $f(x) = x(1-x)/\lambda$ gives
  \[
  H'(x) = \frac{
    1 - 2 \sigma
    - 2u(1-x) + 2u x
  }{
    \lambda
  }
  |F_{s,\nu}'|^2(x),
  \]
  so that
  our hypotheses $u \in (-\tfrac{1}{2},\tfrac{1}{2})$,
  $\Re(s) \geq 1$
  imply $H'(x) \leq 0$ for $0 \leq x < 1$.
  Since $f(0) = 0$ and
  $f(x) \geq 0$ for $0 \leq x \leq 1$,
  we obtain  $|F_{s,\nu}|^2(x) \leq H(x) \leq H(0) =
  |F_{s,\nu}|^2(0) = 1$ for $x \in (0,1)$,
  and the lemma follows from \eqref{eq:kummer}.
\end{proof}
\begin{remark}
  The proof of Lemma \ref{lem:technical-2} shows
  that the hypothesis $\Re(s) \geq 1$
  can be relaxed to $\Re(s) \geq \onehalf + \Re(\nu)$;
  we believe that Lemma \ref{lem:technical-2}
  holds in the larger range $\Re(s) \geq \onehalf$,
  $\nu \in i \mathbb{R} \cup (-\onehalf,\onehalf)$,
  but have not proven this.
  Such refinements are not necessary for our applications
  in the proof of Lemma \ref{lem:j-bound-legit}.
\end{remark}
\begin{remark}
  The bounds asserted by Lemmas \ref{lem:technical-1}
  and \ref{lem:technical-2} are sharp
  for several extremal
  cases of the parameters.
\end{remark}

\bibliography{refs}{}
\bibliographystyle{plain}
\end{document}